\newcommand{\HH}{\mathcal{H}}
\newcommand{\ZZ}{\mathbb{Z}}
\newcommand{\PP}{\mathbb{P}}
\newcommand{\e}{\mathrm{e}}
\newcommand{\gr}{\mathfrak{gr}}
\newcommand{\rk}{\textnormal{rk}}
\renewcommand{\AA}{\mathbb{A}}
\newcommand{\Proj}{\textnormal{Proj}\,}
\newcommand{\xx}{\boldsymbol{x}}
\newcommand{\yy}{\boldsymbol{y}}
\newcommand{\uu}{\boldsymbol{u}}
\newcommand{\vv}{\boldsymbol{v}}
\newcommand{\CC}{\mathbb{C}}
\renewcommand{\a}{\mathrm{a}}
\newcommand{\K}{\mathbb{C}}
\renewcommand{\geq}{\geqslant}
\renewcommand{\leq}{\leqslant}
\newcommand{\MLdeg}{\textnormal{MLdeg}\,}
\newtheorem{theorem}{Theorem}[section]
\newtheorem{corollary}[theorem]{Corollary}
\newtheorem{proposition}[theorem]{Proposition}
\newtheorem{lemma}[theorem]{Lemma}
\theoremstyle{definition}
\newtheorem{definition}[theorem]{Definition}
\newtheorem{example}[theorem]{Example}
\newtheorem{remark}[theorem]{Remark}
\begin{document}

\title{The maximum likelihood degree of Fermat hypersurfaces}

\author[D.~Agostini]{Daniele Agostini}
\address{Daniele Agostini \\ Institut f\"ur Mathematik\\ Humboldt Universit\"at zu Berlin\\ Berlin\\ Germany.}
\email{\href{mailto:daniele.agostini@math.hu-berlin.de}{daniele.agostini@math.hu-berlin.de}}

\author[D.~Alberelli]{Davide Alberelli}
\address{Davide Alberelli \\ University of Osnabrueck \\ Institut f\"ur Mathematik \\ Albrechtstr. 28a \\ 49076 Osnabr\"uck \\ Germany.}
\email{\href{mailto:dalberelli@uos.de}{dalberelli@uos.de}}

\author[F.~Grande]{Francesco Grande}
\address{Francesco Grande\\ Freie Universit{\"a}t Berlin \\ Institut f\"ur Mathematik\\ Arnimallee 2 \\ 14169 Berlin \\ Germany.}
\email{\href{mailto:fgrande@zedat.fu-berlin.de}{fgrande@zedat.fu-berlin.de}}

\author[P.~Lella]{Paolo Lella}
\address{Paolo Lella\\ Dipartimento di Matematica dell'Universit\`{a} degli Studi di Trento\\ 
         Via Sommarive 14\\ 38123 Povo (Trento)\\ Italy.}
\email{\href{mailto:paolo.lella@unitn.it}{paolo.lella@unitn.it}}
\urladdr{\url{http://www.paololella.it}}

\thanks{D.~Agostini and P.~Lella would like to thank CIME-CIRM for the financial support that allowed them to attend the school.
D. Agostini was supported by DFG within the research training group \lq\lq Moduli and Automorphic Forms\rq\rq\ (GRK1800). D.~Alberelli was supported by DFG within the research training group \lq\lq Kombinatorische Strukturen in der Geometrie\rq\rq\ (GK1916).
F.~Grande was supported by DFG within the research training group \lq\lq Methods for Discrete Structures\rq\rq\ (GRK1408). P.~Lella was partially supported by GNSAGA of INdAM, by PRIN 2010--2011 \lq\lq Geometria delle variet\`a algebriche\rq\rq\ and by FIRB 2012 \lq\lq Moduli spaces and Applications\rq\rq.
}
\subjclass[2010]{14Q10, 14N10, 13P25, 62F10}

\begin{abstract}
We study the critical points of the likelihood function over the Fermat hypersurface. This problem is related to one of the main problems in statistical optimization: maximum likelihood estimation. The number of critical points over a projective variety is a topological invariant of the variety and is called maximum likelihood degree. We provide closed formulas for the maximum likelihood degree of any Fermat curve in the projective plane and of Fermat hypersurfaces of degree 2 in any projective space. Algorithmic methods to compute the ML degree of a generic Fermat hypersurface are developed throughout the paper. Such algorithms heavily exploit the symmetries of the varieties we are considering. A computational comparison of the different methods and a list of the maximum likelihood degrees of several Fermat hypersurfaces are available in the last section.
\end{abstract}

\keywords{Maximum likelihood, Fermat hypersurface, likelihood correspondence}

\maketitle

\section*{Introduction}

One of the main problems in statistics is the maximum likelihood estimation (MLE) of a statistical model. It has been widely explored and provided of very efficient methods. Recently a new branch of mathematics, called algebraic statistics, opened new horizons to the study of some statistical models by means of polynomial algebra. Among others, the result of uniqueness of the MLE for linear and log-linear (toric) models can be easily deduced using the algebraic approach \cite[Proposition 1.4, Theorem 1.10]{PachterSturmfels}.

However, most algebraic statistical models have more than one local maximum for the maximum likelihood estimation problem (see for instance \cite[Example 3.26]{PachterSturmfels}). Thus, it is natural to ask how many critical points of the likelihood function lie on the model; this question is known as the maximum likelihood (ML) degree problem. Some machinery from algebraic geometry has been applied to study particular cases of the ML degree problem; such synergy has produced quite a few results (see for instance \cite{HKS,BHR,HS,DraismaRodriguez,GDP,Uhler,HRS}). In particular, many hidden geometric features of algebraic statistical models have been brought to light. Most results in this area have been gathered and extended in \cite{HuhSturmfels}.

The task is to compute the ML degree of a Fermat hypersurface of degree $d$ in the complex projective space $\PP^n$. No closed formula $\phi(n,d)$ for computing the ML degree of Fermat hypersurfaces seems to exist for general $n$ and $d$. Hence, we look for the answer by a direct approach with the aid of algebraic geometry software (\emph{Macaulay2} \cite{M2}). 

The Fermat hypersurface $F_{n,d}$ of degree $d$ in the projective space $\PP^n$ is the zero locus of the polynomial
\begin{equation}\label{eq:fermat}
f_{n,d} := x_0^d + x_1^d + \ldots + x_n^d.
\end{equation}
The ML degree of $F_{n,d}$ is the number of critical points of the likelihood function
\begin{equation*}
\ell_{\boldsymbol{u}} := \dfrac{x_0^{u_0}\cdots x_n^{u_n}}{(x_0+\cdots+x_n)^{u_0+\cdots+u_n}},\qquad \boldsymbol{u} = (u_0,\ldots,u_n) \in \ZZ^{n+1}_{>0}
\end{equation*}
on the hypersurface defined by \eqref{eq:fermat} for a general $\boldsymbol{u}$. The standard approach to the problem is to consider the logarithmic derivatives of $\ell_{\boldsymbol{u}}$ and apply the theorem of Lagrange multipliers (see for instance \cite[Chapter 1]{PachterSturmfels} and \cite[Section 1]{HuhSturmfels}). Hence, $\boldsymbol{p} = (p_0,\ldots,p_n)$ is a critical point for $\ell_{\boldsymbol{u}}$ restricted to $F_{n,d}$ if, and only if, $f_{n,d}(\boldsymbol{p}) = 0$ and the rank of the matrix
\begin{equation}\label{eq:matrixintro}
\left(
\begin{array}{cccc}
\dfrac{u_0}{p_0} & \dfrac{u_1}{p_1} & \cdots & \dfrac{u_n}{p_n} \\
\phantom{\bigg\vert}1\phantom{\bigg\vert} & 1 & \cdots & 1 \\
d p_{0}^{d-1} & d p_{1}^{d-1} & \cdots & d p_{n}^{d-1}
\end{array}
\right)
\end{equation}
is not maximal. It is clear after few computations that even for small values of $n$ and $d$ the time needed to get an answer via the previous definition of critical point becomes huge. Refining this approach, in this paper we succeed in computing the ML degree of $F_{n,d}$ for a larger set of $n$ and $d$ (see Table \ref{tab:MLdegree}). This requires an algorithmic method that uses tools of algebraic geometry and exploits the symmetries of the Fermat hypersurface. We do not explore the possible statistical applications. Nevertheless, it may happen that similar ideas can be applied to other classes of highly symmetric models derived from statistical observations.

\medskip

Section \ref{problempresent} of the paper presents a formulation of the maximum likelihood degree problem in the language of algebraic geometry and introduces the standard  concepts and tools required in the rest of the paper. Moreover, after pointing out the key difficulties of the algorithmic procedure, we illustrate a first try to improve the computations.

Section \ref{problemsymmetrization} contains the main result of the paper for the Fermat hypersurfaces. We show that the ML degree of $F_{n,d}$ can be computed by setting to $1$ all the entries of the data vector $\boldsymbol{u}$. This is the point where algebraic geometry plays a key role. Indeed, to prove this fact, we construct a family of schemes $\mathcal{X} \rightarrow \AA^1$ such that the fiber over $t\in\AA^1$ is the ideal which encodes the solution of the ML degree problem for a family of data vectors $\boldsymbol{u}_t$ (the vectors depend on $t$). We are particularly interested in the ideals defining two special fibers. The first one solves correctly the ML degree problem $F_{n,d}$ and the second one describes the critical points in the case $\boldsymbol{u}= (1,\ldots,1)$. We prove that both ideals have the same number of solutions by showing that the family is flat.

In Section \ref{solutionfinding}, we investigate the symmetries of the problem. In fact, using the data vector $(1,\ldots, 1)$, the action of the symmetric group $\mathcal{S}_{n+1}$ on $F_{n,d}$ extends to the matrix \eqref{eq:matrixintro} and, in particular, to the critical points of the likelihood function. 
By looking at the orbits of the critical points and studying the number of distinct coordinates a critical point might have, we are able to subdivide the ML degree computation  into parallel subtasks whose computations involve less than $n + 1$ variables and, for this reason, are easier. (See \cite{GR} for another example of ML degree computation based on subdividing the main problem into several simpler problems.)

Section \ref{closedformula} is dedicated to closed formulas for two special families of Fermat hypersurfaces, namely $F_{n,2}$ and $F_{2,d}$. The first formula is an application of the results achieved in Section \ref{solutionfinding}, while the second one is obtained using topological arguments and is amazingly simple, based exclusively on the congruence modulo $6$ of the degree $d$.

The last section reports the computational results by mean of comparison tables between the running times of the naive algorithms and the improved algorithms. It is immediate to see that the advantages of using the second one are remarkable when $d\ll n$. As a conclusion we include a table with the maximum likelihood degrees for the Fermat hypersurfaces that have been computed so far.

\section{The ML degree problem}\label{problempresent}

The maximum likelihood degree problem we are facing has been described in the introduction. The formulation in  the language of algebraic geometry is the following: we want to study the variety of $\PP^n$ defined by the $3\times3$ minors of
\begin{equation}\label{eq:matrix}
\left(\begin{array}{cccc}
u_0 & u_1 & \ldots & u_n \\
x_0 & x_1 & \ldots & x_n \\
x_{0}^{d} & x_{1}^{d} & \ldots & x_{n}^{d}
\end{array}
\right)
\end{equation}
and the equation \eqref{eq:fermat} (notice that the matrix \eqref{eq:matrix} is obtained from the matrix \eqref{eq:matrixintro} by multiplying the $i$-th column by $p_i$ for all $i=0,\ldots,n$). In addition, by similarity to some constraints needed in the classical statistical modelling, we only want to consider the points not lying on the hyperplanes defined by the equations $x_0 = \ldots = x_n = x_0 + \cdots + x_n = 0$. We call this hyperplane arrangement the \emph{distinguished arrangement} $\mathcal{H}$.
Throughout the paper, the notation $\mathcal{M}^{\boldsymbol{u}}_{n,d}$ will be used to refer to the matrix \eqref{eq:matrix}. We denote by $\e_{ijk}(\mathcal{M}^{\boldsymbol{u}}_{n,d})$ the minor of $\mathcal{M}^{\boldsymbol{u}}_{n,d}$ corresponding to the columns $i,j,k$, i.e.
\begin{equation*}
\e_{ijk}(\mathcal{M}^{\boldsymbol{u}}_{n,d}) = u_i \,x_jx_k(x_k^{d-1}-x_j^{d-1}) + u_j\, x_kx_i(x_i^{d-1}-x_k^{d-1}) + u_k \,x_ix_j(x_j^{d-1}-x_i^{d-1}),
\end{equation*}
and by $I_{n,d}^{\boldsymbol{u}}$ the ideal generated by the $3\times3$ minors and the Fermat equation:
\begin{equation*}
I_{n,d}^{\boldsymbol{u}} := \left( f_{n,d},\ \e_{ijk}(\mathcal{M}^{\boldsymbol{u}}_{n,d})\ \vert\ \forall\ 0 \leqslant i < j < k \leqslant n \right) \subset \K[x_0,\ldots,x_n].
\end{equation*}
The critical points we are looking for are the solutions of the ideal $I_{n,d}^{\boldsymbol{u}}$ not lying on the distinguished arrangement, i.e.~the ones defined by the saturated ideal
\begin{equation}\label{eq:ideal}
I_{n,d}^{\boldsymbol{u}}\setminus\mathcal{H} := \left( I_{n,d}^{\boldsymbol{u}} : \big(x_0 \cdots x_n (x_0 + \cdots + x_n)\big)^{\infty}\right).
\end{equation}
If we consider also the entries of the data vector $\boldsymbol{u}$ as coordinates of a point in another projective space $\PP^n = \Proj \K[y_0,\ldots,y_n]$, it is natural to consider the \emph{likelihood correspondence}, which is the universal family of these critical points, i.e.~the subscheme $\mathcal{L}_{n,d} \subset \PP^n \times \PP^n$ defined as the closure of
\begin{equation*}
\left\{ (\boldsymbol{p},\uu)\in \PP^n \times \PP^n\ \vert\ \boldsymbol{p} \text{ is solution of } I_{n,d}^{\uu}\setminus \mathcal{H}\right\}
\end{equation*}
whose equations are described by the ideal $I^{\boldsymbol{y}}_{n,d}\setminus \mathcal{H} \subset \K[x_0,\ldots,x_n,y_0,\ldots,y_n]$.

Let us denote by $\PP^n_{\boldsymbol{x}}$ the projective space containing the Fermat hypersurface, by $\PP^n_{\yy}$ the projective space of data and by $\pi_{\xx}$ and $\pi_{\yy}$ the standard projections onto the two factors. Now we recall two theorems that motivate the definition of ML degree we will consider.

\begin{theorem}[{\cite[Theorem 1.6]{HuhSturmfels}}]
The likelihood correspondence $\mathcal{L}_X$ of any irreducible subvariety $X \subset \PP^n_{\boldsymbol{x}}$ is an irreducible variety of dimension $n$ in the product $\PP^n_{\boldsymbol{x}}\times\PP^n_{\boldsymbol{y}}$. The map $\pi_{\boldsymbol{y}} : \mathcal{L}_X \rightarrow\PP^n_{\boldsymbol{y}}$ is generically finite-to-one.
\end{theorem}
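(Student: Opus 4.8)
The plan is to study $\mathcal{L}_X$ through its projection $\pi_{\boldsymbol{x}}$ to $X$, realizing it over a dense open subset as a projective bundle; this delivers irreducibility and the dimension count simultaneously, after which generic finiteness of $\pi_{\boldsymbol{y}}$ reduces to a single infinitesimal nondegeneracy statement.

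Write $\sigma = x_0 + \cdots + x_n$ and set $U := X_{\mathrm{reg}} \setminus \mathcal{H}$, the smooth locus of $X$ lying off the distinguished arrangement; as $X$ is irreducible and is contained in no hyperplane of $\mathcal{H}$, the set $U$ is a dense open, hence irreducible of dimension $\dim X$. First I would record the critical-point condition intrinsically: a point $p \in U$ is critical for $\ell_{\boldsymbol{u}}|_X$ exactly when the logarithmic differential
\[
\big(d\log \ell_{\boldsymbol{u}}\big)(p) \;=\; \sum_{i=0}^n u_i\,\frac{dx_i(p)}{x_i(p)} \;-\; \Big(\textstyle\sum_{i=0}^n u_i\Big)\,\frac{d\sigma(p)}{\sigma(p)}
\]
annihilates $T_pX$, i.e.\ lies in the conormal space $N_p^\ast X \subset T_p^\ast \PP^n$; this is precisely the rank condition encoded by the minors $\e_{ijk}(\mathcal{M}^{\boldsymbol{u}}_{n,d})$ via the theorem of Lagrange multipliers.

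The decisive remark is that for fixed $p$ the assignment $\boldsymbol{u} \mapsto (d\log\ell_{\boldsymbol{u}})(p)$ is $\K$-linear; composing it with the surjection $T_p^\ast \PP^n \twoheadrightarrow T_p^\ast X$ yields a linear map $\bar{\Phi}_p : \K^{n+1} \to T_p^\ast X$ whose projectivized kernel is the fiber of $\pi_{\boldsymbol{x}}$ over $p$. I would then verify that $\bar{\Phi}_p$ is surjective for generic $p$, which amounts to the logarithmic covectors appearing above restricting to a spanning family of $T_p^\ast X$ and relies on $X \not\subseteq H$ for every $H \in \mathcal{H}$. On the dense open $U' \subseteq U$ where $\bar{\Phi}_p$ has maximal rank $\dim X$, the kernels form a subbundle of rank $n+1-\dim X$ of the trivial bundle $U' \times \K^{n+1}$, so $\pi_{\boldsymbol{x}}^{-1}(U')$ is a $\PP^{\,n-\dim X}$-bundle over the irreducible base $U'$. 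Hence it is smooth and irreducible of dimension $\dim X + (n-\dim X) = n$, and taking closures shows that $\mathcal{L}_X$ is irreducible of dimension $n$.

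For the second assertion, since $\dim \mathcal{L}_X = n = \dim \PP^n_{\boldsymbol{y}}$, the map $\pi_{\boldsymbol{y}}$ is generically finite as soon as it is dominant, and dominance follows from exhibiting a single smooth point of $\mathcal{L}_X$ at which the differential of $\pi_{\boldsymbol{y}}$ is surjective (there $\pi_{\boldsymbol{y}}$ is a local submersion between $n$-dimensional smooth loci, so its image contains an open set). Concretely I would look for a pair $(p_0,\boldsymbol{u}_0)$ lying over $U'$ for which the Jacobian of the combined system ``$p \in X$ and $(d\log\ell_{\boldsymbol{u}})(p) \in N_p^\ast X$'', differentiated with respect to both $\boldsymbol{x}$ and $\boldsymbol{y}$, has full rank. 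I expect this nondegeneracy --- equivalently, the statement that for general data the critical locus is reduced and $0$-dimensional --- to be the genuine obstacle: it is naturally attacked by a transversality (Bertini-type) argument, viewing the critical points as the zeros on $X$ of the section $d\log\ell_{\boldsymbol{u}}$ of the rank-$\dim X$ logarithmic cotangent bundle and showing that for general $\boldsymbol{u}$ this section vanishes transversally.
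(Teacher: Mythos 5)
The paper itself offers no proof of this statement --- it is quoted verbatim from \cite[Theorem 1.6]{HuhSturmfels} --- so the only meaningful comparison is with the argument in that reference, and your first half reproduces it faithfully. Over $U = X_{\mathrm{reg}}\setminus\mathcal{H}$ the fiber of $\pi_{\boldsymbol{x}}$ is indeed the projectivized kernel of the linear map $\boldsymbol{u}\mapsto \sum_i u_i\,d\log(x_i/\sigma)\big|_{T_pX}$, and since $x_0/\sigma,\dots,x_n/\sigma$ are, up to the single relation $\sum_i x_i/\sigma=1$, affine coordinates on the chart $\{\sigma\neq 0\}$ and are nonvanishing off $\mathcal{H}$, the covectors $d\log(x_i/\sigma)(p)$ already span $T_p^{\ast}\PP^n$ at \emph{every} $p\in U$; so $\bar\Phi_p$ is surjective on all of $U$ and you need not shrink to $U'$. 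The fibers are linear spaces $\PP^{\,n-\dim X}$, the preimage of $U$ is irreducible of dimension $n$, and its closure is $\mathcal{L}_X$. This part is complete and correct, modulo the standing hypothesis (implicit in the paper) that $X$ is contained in no hyperplane of $\mathcal{H}$.

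The gap is in the second assertion. You correctly reduce generic finiteness to dominance of $\pi_{\boldsymbol{y}}$, and you correctly name the mechanism (transversal vanishing, for generic $\boldsymbol{u}$, of the section $d\log\ell_{\boldsymbol{u}}$ of the logarithmic cotangent bundle), but you then write that you \emph{expect} this nondegeneracy to hold rather than proving it. That expectation is the entire content of the second sentence of the theorem: it is essentially \cite[Theorem 1]{CHKS} and the main theorem of \cite{Huh}, and the Kleiman--Bertini argument you allude to requires analyzing the evaluation map from $\K^{n+1}\otimes\mathcal{O}$ to the log cotangent sheaf on a whole normal crossings compactification of $X_{\mathrm{reg}}\setminus\mathcal{H}$, not at a single point: generic transversality alone does not prevent all zeros of the section from escaping to the boundary, which is exactly the dominance question. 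A concrete warning that dominance is not formal: taking $\boldsymbol{u}=\boldsymbol{p}$ gives $d\log\ell_{\boldsymbol{u}}(\boldsymbol{p})=0$, so $(\boldsymbol{p},\boldsymbol{p})\in\mathcal{L}_X$ for every $\boldsymbol{p}\in U$ and hence $\pi_{\boldsymbol{y}}(\mathcal{L}_X)\supseteq X$; this shows the image has dimension at least $\dim X$ but gets you no closer to dimension $n$. As written, your second half is a correct plan whose decisive step is missing.
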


\begin{theorem}[{\cite[Theorem 1.15]{HuhSturmfels}}]\label{th:1.15HS}
Let $\uu \in \mathbb{R}^{n+1}_{>0}$, and let $X \subset \PP^n$ be an irreducible variety such that no singular 
point of any intersection $X \cap \{x_i = 0\}$ lies in the hyperplane $\{x_0+\cdots+x_n = 0\}$. Then
\begin{enumerate}[1.]
\item the likelihood function $\ell_{\uu}$ on $X$ has only finitely many critical points in $X_{\textnormal{reg}}\setminus \mathcal{H}$;
\item if the fiber $\pi_{\yy}^{-1}(\uu)$ is contained in $X_{\textnormal{reg}}$, then its length equals the ML degree of $X$.
\end{enumerate}
\end{theorem}

Notice that the hypotheses of Theorem \ref{th:1.15HS} are automatically verified in the case we are interested in. Indeed, the Fermat hypersurface $F_{n,d}$ is smooth, i.e.~$(F_{n,d})_{\textnormal{reg}} = F_{n,d}$, and every intersection $F_{n,d} \cap \{x_i = 0\}$ is again a Fermat hypersurface in a space of lower dimension. Thus, we adopt the following definition.

\begin{definition}
The \emph{ML degree $\MLdeg X$} of the irreducible subvariety $X\subset \PP^n_{\boldsymbol{x}}$ is the degree of the projection of the likelihood correspondence $\mathcal{L}_X$ to the second factor $\pi_{\boldsymbol{y}}: \mathcal{L}_X \rightarrow\PP^n_{\boldsymbol{y}}$.
\end{definition}

In order to determine the ML degree of the Fermat hypersurface, we can begin considering the following two standard approaches.

\smallskip

\noindent\textbf{Multidegree.} We can consider the multidegree of the likelihood correspondence $\mathcal{L}_{n,d}$ in the sense of \cite[Chapter 8]{MillerSturmfels} with respect to the natural $\ZZ^2$-grading on the polynomial ring $\K[x_0,\ldots,$ $x_n,y_0,\ldots,y_n]$. The multidegree of $\mathcal{L}_{n,d}$ is a polynomial $B_{\mathcal{L}_{n,d}}$ in the ring $\ZZ[T_{\boldsymbol{x}},T_{\boldsymbol{y}}]$ of degree $n = \dim \mathcal{L}_{n,d}$. It can be computed by means of the prime ideal defining $\mathcal{L}_{n,d}$ \cite[Proposition 8.49]{MillerSturmfels} and turns out to have the following shape 
\begin{equation}\label{eq:bidegree}
B_{\mathcal{L}_{n,d}}(T_{\boldsymbol{x}},T_{\boldsymbol{y}}) = (\MLdeg F_{n,d}) T_{\boldsymbol{x}}^n + \cdots + (\deg F_{n,d})T_{\boldsymbol{x}} T_{\boldsymbol{y}}^{n-1}.
\end{equation}
Hence, we can compute the ML degree of the Fermat hypersurface as the leading coefficient of the multidegree $B_{\mathcal{L}_{n,d}}(T_{\boldsymbol{x}},T_{\boldsymbol{y}})$ of the likelihood correspondence.

\smallskip

\noindent\textbf{Random data.} The degree of the map $\pi_{\boldsymbol{y}}: \mathcal{L}_{n,d} \rightarrow\PP^n_{\boldsymbol{y}}$ is the degree of the generic fiber of $\pi_{\boldsymbol{y}}$. Thus, there is an open dense subset $\mathcal{U} \subset\PP^n_{\boldsymbol{y}}$ whose points have fiber of constant degree. A computational strategy to determine $\MLdeg F_{n,d}$ is to randomly pick a point of $\PP^n_{\yy}$ and to calculate the degree of its fiber. Indeed, the probability to randomly choose a point in the Zariski closed subset $\PP^n_{\boldsymbol{y}}\setminus \mathcal{U}$ is negligible, so that we get almost surely the degree of the projection $\pi_{\yy}$, i.e.~the ML degree.  

Computational experiments (see Table \ref{tab:2345}\subref{tab:cf1} and Table \ref{tab:2345}\subref{tab:cf2} for details) show that these two methods for determining the ML degree of $F_{n,d}$ have a long execution time even for small values of $n$ and $d$. This is mainly due to the computation of a Gr\"obner basis of the ideal, which is needed to determine the degree (resp.~multidegree) of the ideal of the generic fiber (resp.~likelihood correspondence). A little more accurate analysis of the computational experiments reveals that the time required by the elimination of the critical points lying on the distinguished arrangement $\mathcal{H}$ is the hardest part of the process. Indeed, to accomplish this task, we need to saturate the ideal $I_{n,d}^{\boldsymbol{u}}$ (resp.~$I_{n,d}^{\boldsymbol{y}}$) by $n+2$ linear forms ($x_0$, \ldots, $x_n$ and $x_0 + \cdots + x_n$) and this operation may be computationally expensive. In the case of the Fermat hypersurface, we see that we can reduce to saturation by a single linear form.


\begin{lemma}\label{prop:davide}
Let $\uu$ be a data vector such that $u_i\neq 0,\ \forall\ i,$ and let $\boldsymbol{p} = [p_0:\ldots:p_n] \in \PP^n_{\boldsymbol{x}}$ be a solution of the ideal $I_{n,d}^{\uu}$. If for some $i$ the coordinate $p_i$ vanishes, then $\sum_{j=0}^n p_j = 0$.
\end{lemma}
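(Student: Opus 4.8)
The plan is to exploit the vanishing of the coordinate $p_i$ to collapse the relevant minors into a rigid condition on the remaining nonzero coordinates, and then to feed that condition into the Fermat equation.

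First I would evaluate at $\boldsymbol{p}$ the minors $\e_{ijk}$ that use the vanishing column $i$, for arbitrary $j,k \neq i$ (these are all among the generators of $I_{n,d}^{\boldsymbol{u}}$). Inspecting the explicit expression for $\e_{ijk}$, the two terms carrying $u_j$ and $u_k$ each contain the factor $x_i$ and hence vanish at $p_i = 0$, so only the first term survives, leaving
\[
0 = \e_{ijk}(\boldsymbol{p}) = u_i\, p_j p_k\big(p_k^{d-1} - p_j^{d-1}\big).
\]
Since the entries of $\boldsymbol{u}$ are strictly positive, $u_i \neq 0$, and I may cancel it to get $p_j p_k\big(p_k^{d-1}-p_j^{d-1}\big)=0$ for every pair $j,k \neq i$.

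Next I would read this off on the set $S = \{\,j \neq i : p_j \neq 0\,\}$ of remaining nonzero coordinates, which is nonempty because $\boldsymbol{p}$ is a genuine point of $\PP^n$ while $p_i = 0$. For $j,k \in S$ both $p_j$ and $p_k$ are invertible, so the relation forces $p_j^{d-1} = p_k^{d-1}$; hence there is a common value $c := p_j^{d-1}$, the same for all $j \in S$, and $c \neq 0$.

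The crux is the final passage, where this rigidity meets the Fermat equation. As $p_j = 0$ for every index outside $S$, the condition $f_{n,d}(\boldsymbol{p}) = 0$ becomes $\sum_{j \in S} p_j^{d} = 0$; rewriting $p_j^{d} = p_j^{d-1} p_j = c\, p_j$ turns this degree-$d$ identity into the linear one $c\sum_{j \in S} p_j = 0$, and cancelling $c \neq 0$ yields $\sum_{j \in S} p_j = 0$. Since all coordinates outside $S$ (including $p_i$) are zero, this is precisely $\sum_{j=0}^n p_j = 0$. I expect the one point requiring care to be exactly this step: the conversion of the Fermat relation into a linear equation depends on $c$ being nonzero and on keeping precise track of which coordinates vanish. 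It is worth noting, as a pleasant simplification, that the minors \emph{not} touching column $i$ are never needed — the minors through the vanishing column already force everything.
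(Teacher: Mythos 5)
Your proposal is correct and follows essentially the same route as the paper: evaluate the minors through the vanishing column to force $p_j^{d-1}=p_k^{d-1}$ on all nonzero coordinates, then substitute $p_j^d = p_j^{d-1}p_j$ into the Fermat equation and cancel the common nonzero factor. The only cosmetic difference is that the paper opens by observing that the Fermat equation forces at least two nonzero coordinates, whereas you only invoke nonemptiness of the set of nonzero coordinates, which suffices for your version of the argument.
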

\begin{proof}
First, note that there are at least two non-zero coordinates, as the point is a solution of the equation $f_{n,d} = 0$. Let $p_k$ and $p_h$ be any two non-zero coordinates. Evaluating the minor $\e_{ikh}(\mathcal{M}^{\boldsymbol{u}}_{n,d})$ on the $3$-tuple $(p_i=0,p_k,p_h)$, we obtain 
\begin{equation*}
\e_{ikh}\big(\mathcal{M}^{\boldsymbol{u}}_{n,d}\big)(0,p_k,p_h) = u_i\, p_kp_h (p_h^{d-1}-p_k^{d-1}),
\end{equation*}
so that for any solution of the ideal we have $p_h^{d-1} = p_k^{d-1}$.
Finally,
\begin{equation*}
0 = \sum_{j=0}^n p_j^d = \sum_{\begin{subarray}{c}j=0\\ p_j \neq 0\end{subarray}}^n p_j^d = \sum_{\begin{subarray}{c}j=0\\ p_j \neq 0\end{subarray}}^n p_j\, p_j^{d-1} = p_k^{d-1}\sum_{\begin{subarray}{c}j=0\\ p_j \neq 0\end{subarray}}^n p_j = p_k^{d-1}\sum_{j=0}^n p_j,
\end{equation*}
where $p_k$ is just one of the non-zero coordinates.
\end{proof}

The previous lemma implies that saturating the ideal by the linear form $x_0+\cdots+x_n$ suffices to guarantee that no solution lies on the hyperplane arrangement $\mathcal{H}$ for a generic data vector $\boldsymbol{u}$. Saturating with respect to a unique linear form instead of $n+2$ is certainly an improvement. Unfortunately, the saturation with respect to $x_0+\cdots+x_n$ is much more expensive than the saturation with respect to a single variable. Next proposition shows that we can avoid the saturation by $x_0+\cdots+x_n$ and compute $\MLdeg F_{n,d}$ as the difference between the number of points defined by $I_{n,d}^{\uu}$ and the number of points defined by the ideal $I_{n,d}^{\uu} + (x_0+\cdots+x_n)$. 

\begin{proposition}\label{prop:daniele_smooth}
For a generic $\uu\in \mathbb{P}^n_{\yy}$ the points defined by $I^{\uu}_{n,d}$ lying on the hyperplane $x_0 + \ldots + x_n = 0$ are simple. Hence,
\begin{equation}\label{eq:deg_diff}
\deg \left(I_{n,d}^{\boldsymbol{u}}\setminus\mathcal{H}\right) = \deg I_{n,d}^{\boldsymbol{u}} - \deg \left(I_{n,d}^{\uu} + (x_0+\cdots+x_n)\right).
\end{equation}
\end{proposition}
\begin{proof}
By symmetry, we can restrict our attention to the affine chart $\mathcal{U}_0 = \{ x_0\ne 0 \}\subset\mathbb{P}^n_{\xx}$ with affine coordinates $t_i=\frac{x_i}{x_0}$ for $i=1,\dots,n$. The ideal $I^{\uu}_{n,d}$ restricted to $\mathcal{U}_0$ is defined by the polynomial $1 + t_1^d \dots + t_n^d$ and by the $3 \times 3$ minors of the matrix
\[ 
\left(\begin{array}{cccc}
u_0 & u_1 & \ldots & u_n \\
1 & t_1 & \ldots & t_n \\
1 & t_{1}^{d} & \ldots & t_{n}^{d}.
\end{array}\right).
\]
First, we observe that
\[
\rk \left(\begin{array}{cccc}
u_0 & u_1 & \ldots & u_n \\
1 & t_1 & \ldots & t_n \\
1 & t_{1}^{d} & \ldots & t_{n}^{d}.
\end{array}\right) = 
\rk \left(\begin{array}{cccc}
u_0+\cdots+u_n & u_1 & \ldots & u_n \\
1+t_1 + \cdots + t_n & t_1 & \ldots & t_n \\
1+t_1^d + \cdots + t_n^d & t_{1}^{d} & \ldots & t_{n}^{d}.
\end{array}\right).
\]
Since $\uu$ is generic we can assume that $\sum_{i=0}^n u_i \neq 0$ and then normalize to $\sum_{i=0}^n u_i = 1$. Moreover, $1+t_1^d + \cdots + t_n^d$ vanishes since the points we are looking at lie on the Fermat hypersurface $F_{n,d}$. Let $h=1+t_1+\dots+t_n$. Applying a sequence of column operations to the matrix yields the following equalities
\[
\begin{split}
\rk \left(\begin{array}{cccc}
1 & u_1 & \ldots & u_n \\
h & t_1 & \ldots & t_n \\
0 & t_{1}^{d} & \ldots & t_{n}^{d}
\end{array}\right) &{}= 
\rk \left(\begin{array}{cccc}
1 & 0 & \ldots & 0 \\
h & t_1 - u_1 h & \ldots & t_n-u_n  h \\
0 & t_{1}^{d} & \ldots & t_{n}^{d}
\end{array}\right) ={}\\
&{} = \rk \left(\begin{array}{ccc}
t_1 - u_1 h & \ldots & t_n-u_n  h \\
t_{1}^{d} & \ldots & t_{n}^{d}
\end{array}\right) +1 = {}\\
&{}=\rk \left(\begin{array}{cccc}
\ldots & t_i - u_i h & \ldots & (h-1)-h(1-u_0) \\
\ldots & t_{i}^{d} & \ldots & t_1^d + \cdots + t_{n}^{d}
\end{array}\right) +1 = {}\\
&{} = \rk \left(\begin{array}{cccc}
\ldots & t_i - u_i h & \ldots & u_0h-1 \\
\ldots & t_{i}^{d} & \ldots  & -1
\end{array}\right) +1 = {}\\
&{} = \rk \left(\begin{array}{cccc}
\ldots & (u_0h-1)t_i^d + t_i - u_i h & \ldots & u_0h-1 \\
\ldots & 0 & \ldots & -1
\end{array}\right) +1 
\end{split}
\]
 so that
\[
I^{\uu}_{n,d}\big\vert_{\mathcal{U}_0} = \left(1+t_1^d + \cdots + t_n^d,(u_0h-1)t_{i}^d + t_{i} - u_{i} h\ \vert\ \forall\ 1 \leq i\leq n-1\right).
\]

In order to prove that the solutions of $I_{n,d}^{\uu}\big\vert_{\mathcal{U}_0}$ that satisfy $h=0$ are simple, we compute the Jacobian matrix of $I_{n,d}^{\uu}\big\vert_{\mathcal{U}_0}$ and we check that the locus where it is not of maximal rank does not intersect the set defined by $I_{n,d}^{\uu}\big\vert_{\mathcal{U}_0} + (h)$. The Jacobian matrix restricted to the set $\{h = 0\}$ is
\[ 
\left(
\begin{array}{cccc}
dt_1^{d-1}  & \ldots & u_0t_i^d-u_i & \ldots  \\
\vdots &   & \vdots&    \\
 dt_i^{d-1} & \ldots  & u_0t_{i}^d-u_{i}+(1-dt_{i}^{d-1})  & \ldots \\
\vdots &  & \vdots&   \\
dt_n^{d-1}  & \ldots & u_0t_i^d-u_i & \ldots  \\
\end{array}
\right)
\]
and the rank of this matrix equals the rank of the matrix
\[ 
W:=
\left(
\begin{array}{cccccc}
t_1^{d-1} - t_n^{d-1} & 1 - dt_1^{d-1} &  \ldots & 0 & \ldots & 0 \\
\vdots & \vdots & \ddots  & \vdots&   & \vdots \\
 t_i^{d-1} - t_n^{d-1} & 0&\ldots  & 1-dt_{i}^{d-1}  & \ldots & 0\\
\vdots & \vdots & & \vdots & \ddots & \vdots   \\
t_{n-1}^{d-1} - t_n^{d-1} & 0 & \ldots & 0 & \ldots & 1 - dt_{n-1}^{d-1} \\
t_n^{d-1}  & u_0t_1^d-u_1 & \ldots & u_0t_i^d-u_i & \ldots & u_0t_{n-1}^d-u_{n-1} \\
\end{array}
\right).
\]
Now, we have to show that the ideal $J := I_{n,d}^{\uu}\big\vert_{\mathcal{U}_0} + (\det W, h)$ has no solution. Reducing by $h$ the generators of $I_{n,d}^{\uu}\big\vert_{\mathcal{U}_0}$ we obtain
\[ 
J = (\det W, h, 1+t_1^d+\dots+t_n^d, t_i^d-t_i\ \vert\ \forall\ 1 \leq i \leq n-1) = (\det W, h, t_i^d-t_i\ \vert\ \forall\ 1 \leq i \leq n), 
\]
which implies that every solution of $J$ satisfy the following condition: either $t_i = 0$ or $t^{d-1}_i = 1$ for every $i = 1,\ldots,n$. Since the matrix $W$ is symmetric in the variables $t_1,\ldots,t_{n-1}$, we can assume $t_i\ne 0$ for $1\leq i \leq r \leq n-1$ and $t_i=0$ for $r+1\leq i \leq n-1$. Thus, the matrix $W$ has the following form
\[ 
\left(\begin{array}{cccccccc}
1-t_{n}^{d-1}  &  1-d & \ldots & 0 & 0 & \ldots  & 0 \\
 \vdots & \vdots  & \ddots & \vdots & \vdots & 0 & \vdots \\
1-t_{n}^{d-1} & 0 & \ldots & 1-d & 0 & \ldots & 0\\ 
-t_{n}^{d-1} & 0 &  \ldots & 0   & 1 & \ldots & 0 \\
 \vdots & \vdots  &  0 & \vdots & \vdots & \ddots & \vdots \\
-t_{n}^{d-1} & 0  & \dots & 0 & 0 & \dots & 1 \\
t_{n}^{d-1} & u_0t_1 -u_1 & \dots & u_0t_{r}-u_{r} & -u_{r+1} & \dots &  -u_{n-1} \\
\end{array}\right)
\]
from which we can easily compute its determinant and express it (up to a constant) as follows
\[ 
\begin{split}
\det W &{}=t_n^{d-1} - \frac{1-t_n^{d-1}}{d-1}\sum_{i=1}^r (u_0t_i-u_i) - \sum_{i=r+1}^{n-1}u_it_n^{d-1} = {}\\&{} = \begin{cases}
\smallskip
\dfrac{1}{d-1}\left( \sum\limits_{i=1}^r u_i + u_0 \right),& \text{if }t_n =0,\\
1 - \sum\limits_{i=r+1}^{n-1} u_i,& \text{if } t_n^{d-1} = 1.
\end{cases}
\end{split}
\]
For a generic point $\uu \in \PP^n_{\yy}$, the determinant does not vanish and since this reasoning works for every $1 \leq r \leq n-1$, we proved that $J = (1)$. 

Finally, in order to prove the last statement, we notice that, as the points of $I_{n,d}^{\uu}$ lying on $\mathcal{H}$ in fact lies on $x_0 + \cdots + x_n = 0$ (Lemma \ref{prop:davide}) and are simple, the following equality holds:
\[
I_{n,d}^{\uu} \setminus \mathcal{H} =  \left( I_{n,d}^{\boldsymbol{u}} : (x_0 + \cdots + x_n)^{\infty}\right) = \left( I_{n,d}^{\boldsymbol{u}} : (x_0 + \cdots + x_n)\right).
\]
\end{proof}

\begin{remark}\label{rk:also1}
Notice that the proof of Proposition \ref{prop:daniele_smooth} applies also for $\uu=[1:\ldots:1]$.
\end{remark}

We conclude this section by describing the shape of the ideal $\big(I_{n,d}^{\uu} + (x_0+\cdots+x_n)\big)$.

\begin{lemma}\label{prop:notDependU}
Let $\uu=[u_0:\ldots:u_n]$ be a point in $\PP^n_{\yy}$ such that $u_0+\cdots+u_n \neq 0$. The solutions of $I_{n,d}^{\uu}$ lying on the hyperplane $x_0+\cdots+x_n=0$  do not depend on the point $\uu$. More precisely,
\begin{equation}\label{eq:pointRemoved}
I_{n,d}^{\uu} + (x_0+\cdots+x_n) = \left(\sum_{j=0}^n\, x_j,\ \sum_{j=0}^n\, x_j^d,\ x_k x_h(x_h^{d-1}-x_k^{d-1}),\ \forall\ 0\leqslant k < h \leqslant n\right).
\end{equation}
\end{lemma}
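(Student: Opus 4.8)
The plan is to prove the two inclusions of \eqref{eq:pointRemoved} separately, and the whole argument hinges on rewriting each minor in an antisymmetric form. Set
\[
h_{ab} := x_a x_b^{\,d} - x_a^{\,d} x_b = x_a x_b\big(x_b^{d-1}-x_a^{d-1}\big),
\]
so that $h_{ab}=-h_{ba}$ and $g_{kh}=x_k x_h(x_h^{d-1}-x_k^{d-1})=h_{kh}$ for $k<h$. Reading off the displayed expression for the minor, one has directly
\[
\e_{ijk}\big(\mathcal{M}^{\boldsymbol{u}}_{n,d}\big) = u_i\, h_{jk} + u_j\, h_{ki} + u_k\, h_{ij}.
\]
From this the inclusion of the left-hand ideal into the right-hand one is immediate: $f_{n,d}$ and $x_0+\cdots+x_n$ appear among the generators of the right-hand side, while every minor is visibly a $\K[\xx]$-combination of the $h_{ab}$ and hence lies in the ideal $(g_{kh}\mid k<h)$. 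So the content of the lemma is the reverse inclusion, that each $g_{kh}=h_{kh}$ lies in $I_{n,d}^{\uu}+(x_0+\cdots+x_n)$ (the remaining generators $f_{n,d}$ and $x_0+\cdots+x_n$ of the right-hand side being trivially in the left-hand one); this is also where the hypothesis $u_0+\cdots+u_n\neq 0$ must enter.

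The linchpin I would isolate first is the exact polynomial identity obtained by summing $h_{ab}$ over the second index. Since $h_{ab}=x_a x_b^{\,d}-x_a^{\,d}x_b$, for each fixed $a$ one computes
\[
\sum_{b\neq a} h_{ab} = x_a\sum_{b\neq a} x_b^{\,d} - x_a^{\,d}\sum_{b\neq a} x_b = x_a\big(f_{n,d}-x_a^{\,d}\big) - x_a^{\,d}\big((x_0+\cdots+x_n)-x_a\big) = x_a\, f_{n,d} - x_a^{\,d}\,(x_0+\cdots+x_n).
\]
In particular $\sum_{b\neq a} h_{ab}\equiv 0$ modulo $\big(f_{n,d},\, x_0+\cdots+x_n\big)$, with no remainder, which is what keeps the bookkeeping in the last step clean.

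With this in hand, the final step is to recover a single $h_{ij}$ from the minors. Fixing $i\neq j$ and summing over the remaining index gives
\[
\sum_{k\neq i,j}\e_{ijk}\big(\mathcal{M}^{\boldsymbol{u}}_{n,d}\big) = u_i\sum_{k\neq i,j} h_{jk} + u_j\sum_{k\neq i,j} h_{ki} + \Big(\sum_{k\neq i,j} u_k\Big)h_{ij}.
\]
Substituting the identity for $\sum_{k\neq i,j}h_{jk}$ and $\sum_{k\neq i,j}h_{ki}$ (each differing from the full sum $\sum_{k\neq j}$, resp. $\sum_{k\neq i}$, by the single term $h_{ji}=-h_{ij}$) collapses all contributions to $h_{ij}$ into the coefficient $u_i+u_j+\sum_{k\neq i,j}u_k = u_0+\cdots+u_n$, while every stray term lies in $\big(f_{n,d},\,x_0+\cdots+x_n\big)$. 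Hence, modulo this ideal,
\[
\sum_{k\neq i,j}\e_{ijk}\big(\mathcal{M}^{\boldsymbol{u}}_{n,d}\big) \equiv (u_0+\cdots+u_n)\, h_{ij}.
\]
Since $u_0+\cdots+u_n\neq 0$ and $\K$ is a field, dividing by this scalar exhibits $g_{ij}=h_{ij}$ as an element of $I_{n,d}^{\uu}+(x_0+\cdots+x_n)$, which completes the reverse inclusion. I expect the only real obstacle to be keeping the antisymmetry corrections straight when passing from $\sum_{k\neq j}$ to $\sum_{k\neq i,j}$ and checking that the leftover $f_{n,d}$- and $(x_0+\cdots+x_n)$-multiples genuinely absorb into the ideal; the conceptual heart is just the summation identity together with the non-vanishing of $u_0+\cdots+u_n$. (As a sanity check, when $n=2$ there is a unique third index $k$, so the sum has a single term and the argument already goes through without any genuine summation.)
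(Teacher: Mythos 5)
Your proof is correct and is essentially the paper's argument in expanded form: your sum $\sum_{k\neq i,j}\e_{ijk}(\mathcal{M}^{\uu}_{n,d})$ is, by multilinearity of the determinant, exactly the $3\times 3$ minor (in columns $i,j$ and the modified column) of the matrix obtained by replacing a column of $\mathcal{M}^{\uu}_{n,d}$ with the sum of all columns, which is the column operation the paper performs before reducing modulo $x_0+\cdots+x_n$ and $f_{n,d}$. Both routes isolate the term $(u_0+\cdots+u_n)\,x_ix_j(x_j^{d-1}-x_i^{d-1})$ and divide by the nonzero scalar $u_0+\cdots+u_n$, and both treat the reverse inclusion as immediate, so nothing essential differs.
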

\begin{proof}
The ideal of $3\times 3$ minors of $\mathcal{M}_{n,d}^{\uu}$ contains also the minors of any matrix obtained from $\mathcal{M}_{n,d}^{\uu}$ by column operations. For instance, we can fix $i$ and replace the $i$-th column with the sum of all columns. We get the matrix
\small
\begin{equation*}
\left(\begin{array}{ccccccc}
u_0 & \ldots & u_{i-1}& \sum_j u_j & u_{i+1}& \ldots & u_n \\
x_0 & \ldots & x_{i-1}& \sum_j x_j & x_{i+1}& \ldots & x_n \\
x_0^d & \ldots & x_{i-1}^d & \sum_j x_j^d & x_{i+1}^d & \ldots & x_n^d \\
\end{array}
\right) =
\left(\begin{array}{ccccccc}
u_0 & \ldots & u_{i-1}& \sum_j u_j & u_{i+1}& \ldots & u_n \\
x_0 & \ldots & x_{i-1}& 0 & x_{i+1}& \ldots & x_n \\
x_0^d & \ldots & x_{i-1}^d & 0 & x_{i+1}^d & \ldots & x_n^d \\
\end{array}
\right)
\end{equation*}
\normalsize
as we are assuming both $x_0+\cdots+x_n$ and $x_0^d+\cdots+x_n^d$ equal to $0$. Among the minors of the second matrix, we have, up to sign, $(\sum_j u_j)x_k x_h (x_h^{d-1}-x_k^{d-1}),\ k,h \neq i$, and varying $i$, we prove that
\begin{equation*}
\left( \sum x_j,\ \sum x_j^d,\ x_kx_h(x_h^{d-1}-x_k^{d-1}),\ \forall\ 0\leqslant k < h \leqslant n\right) \subseteq I_{n,d}^{\uu} + (x_0+\cdots+x_n).
\end{equation*} 
The other inclusion is straightforward if we notice that the $3\times 3$ minors of $\mathcal{M}_{n,d}^{\uu}$ are linear combinations of the polynomials $x_kx_h(x_h^{d-1}-x_k^{d-1})$.
\end{proof}

See Table \ref{tab:2345}\subref{tab:cf1},\subref{tab:cf1plus} and Table \ref{tab:2345}\subref{tab:cf2},\subref{tab:cf2plus} for a comparison between the running time of the naive strategy (based on the saturation) and the strategy based on Proposition \ref{prop:daniele_smooth} applied both to the multidegree and random data approach.

\section{Symmetrizing the problem}\label{problemsymmetrization}

To improve further the computations, we would like to extend some symmetries of the Fermat hypersurfaces to symmetries of the ML degree problem. More precisely, we will consider the action of the symmetric group $\mathcal{S}_{n+1}$ on the variables of $\K[x_0,\ldots,x_n]$. By looking at the matrix \eqref{eq:matrix}, we notice that the ideal \eqref{eq:ideal} is symmetric with respect to $\mathcal{S}_{n+1}$ if the polynomials $\mathrm{e}_{ijk}(\mathcal{M}^{\boldsymbol{u}}_{n,d})$ are invariant under the action of $\mathcal{S}_{n+1}$. This is equivalent to the requirement that all the entries of the data vector $\boldsymbol{u}$ are equal. From a statistical point of view, we are restricting to the very specific case where we observe the same number of occurrences for each random variable. From the algebraic geometry point of view, we are claiming that the point $\boldsymbol{1} := [1:\ldots:1] \in \PP^n_{\boldsymbol{y}}$ belongs to the open subset $\mathcal{U}$ of points whose fiber has the correct degree.

For any multi-index $\boldsymbol{u} = (u_0,\ldots,u_n)$, let us denote by $\vert\boldsymbol{u}\vert$ the sum $u_0+\cdots+u_n$, by $\hat{u}_i$ the difference $\vert\uu\vert - u_i$ and by $\hat{\uu}$ the multi-index $(\hat{u}_0,\ldots,\hat{u}_n)$.  

Consider a generic point $\boldsymbol{u} \in \mathcal{U} \subset \PP^n_{\boldsymbol{y}}$ for the morphism $\mathcal{L}_{n,d} \rightarrow \PP^n_{\boldsymbol{y}}$, i.e.~such that the degree of its fiber equals the ML degree of $F_{n,d}$. We assume that $\vert\boldsymbol{u}\vert \neq 0$ and $u_i\neq 0$, for all $i$. We prove that the fiber of the point $[\vert\uu\vert:\ldots:\vert\uu\vert] = \boldsymbol{1} \in \PP^n_{\yy}$ has the same degree.
Consider the affine line in $\PP^n_{\yy}$ passing through $\boldsymbol{u}$ and $\boldsymbol{1}$, which is the image of the map $\phi:\AA^1 \rightarrow \PP^n_{\yy}$ induced by the ring homomorphism
\begin{equation}\label{eq:flatRing}
\begin{split}
\K[y_0,\ldots,y_n] &\longrightarrow\qquad \K[t]\\
\parbox{1.5cm}{\centering $y_i$} & \longmapsto u_it + \vert\uu\vert(1-t).
\end{split}
\end{equation}
We will show that the induced subfamily of the family $\mathcal{X}:=\Proj \big(\K[y_0,\ldots,y_n]/I_{n,d}^{\boldsymbol{y}}\big) \to \PP^n_{\boldsymbol{y}}$
\begin{center}
\begin{tikzpicture}
\node (A) at (0,0) [inner sep=2pt] {$\AA^1$};
\node (B) at (2.5,0) [inner sep=2pt] {$\PP^n_{\boldsymbol{y}}$};

\draw [->] (A) --node[above]{\small $\phi$} (B);

\node (C) at (0,1.5) [inner sep=2pt] {\small $\AA^1 \times_{\PP^n_{\boldsymbol{y}}} \mathcal{X}$};
\node (D) at (2.5,1.5) [inner sep=2pt] {$\mathcal{X}$};

\draw [->] (C) -- (D);

\draw [->] (C) -- (A);
\draw [->] (D) -- (B);
\end{tikzpicture}
\end{center}
is flat, i.e.~all the fibers have the same degree (see \cite[III, Theorem 9.9]{Hartshorne}).

To prove such a property, we will need a flatness criterion for filtered modules. Thus, we briefly recall few features of filtered modules (see \cite[Chapter 5]{Eisenbud}) for the particular case we are dealing with. Let us consider a polynomial ring $R$ and its irrelevant ideal $\mathfrak{m}$. The $\mathfrak{m}$-adic filtration of $R$ is the descending multiplicative filtration of ideals
\begin{equation*}
R \supset \mathfrak{m} \supset \mathfrak{m}^2 \supset \cdots \supset \mathfrak{m}^\ell \supset \cdots
\end{equation*}
that induces the standard graded structure of $R$ by considering the direct sum
\begin{equation*}
\mathfrak{gr}\, R := \bigoplus_{\ell\geqslant 0} \mathfrak{m}^\ell/\mathfrak{m}^{\ell+1} = \bigoplus_{\ell \geqslant 0} R_\ell.
\end{equation*}
The same construction extends to any $R$-module $M$. The $\mathfrak{m}$-adic filtration of $M$ is
\begin{equation*}
M \supset \mathfrak{m}M \supset \mathfrak{m}^2M \supset \cdots \supset \mathfrak{m}^\ell M \supset \cdots
\end{equation*}
and the associated graded $\gr\, R$-module is
\begin{equation*}
\gr\, M := \bigoplus_{\ell\geqslant 0} \mathfrak{m}^\ell M/\mathfrak{m}^{\ell+1}M = M/\mathfrak{m}M \oplus \mathfrak{m}M/\mathfrak{m}^2M \oplus \cdots
\end{equation*}
For any $f \in M$, we define the \emph{initial form of $f$} to be the element
\begin{equation*}
\text{in}(f) := f \bmod \mathfrak{m}^{\ell+1}M \subset \mathfrak{m}^{\ell}M/\mathfrak{m}^{\ell+1}M
\end{equation*}
where $\ell$ is the greatest index such that $f \in \mathfrak{m}^{\ell}M$. If $M'$ is a submodule of $M$, we can consider the $(\gr\, R)$-submodule of $\gr\, M$ generated by the elements $\text{in}(f),\ \forall\ f \in M'$. In particular, for any ideal $J \subset R$, we have $\gr(R/J) = (\gr\, R)/\text{in}(J) = R/\text{in}(J)$ (see \cite[Exercise 5.3]{Eisenbud}).

\begin{theorem}[{\cite[Theorem 2.20]{JJ}, \cite[Proposition 3.12]{Bjork}}]\label{th:flatCrit}
Let $S$ be a quotient of a polynomial ring and let $M$ be a $S$-module. If $\gr\, M$ is flat over $\gr\, S$, then $M$ is flat over $S$. 
\end{theorem}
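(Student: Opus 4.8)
This statement is a general flatness criterion for filtered modules, and the natural strategy is to lift exactness from the associated graded level—where flatness is granted—back to the filtered level, using that the $\m$-adic filtration on a polynomial ring is exhaustive and separated. I read the hypothesis as: $S$ is the filtered ring, $M$ a filtered $S$-module, both filtered $\m$-adically, with $\gr M$ flat over $\gr S$. The plan is to test flatness through the ideal-theoretic criterion: it suffices to show that for every inclusion $A \hookrightarrow B$ of $S$-modules (and in fact only the inclusions $J \hookrightarrow S$ of finitely generated ideals are needed, since $S$ is Noetherian) the induced map $A \otimes_S M \to B \otimes_S M$ stays injective, equivalently $\mathrm{Tor}_1^S(S/J, M) = 0$. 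First I would filter $A$ and $B$ so that the inclusion becomes strict: giving $B$ its $\m$-adic filtration and $A$ the induced subspace filtration $F_p A := A \cap F_p B$ makes $\gr A \to \gr B$ injective. Since $\gr M$ is flat over $\gr S$, tensoring this injection of $\gr S$-modules with $\gr M$ preserves injectivity:
\[
\gr A \otimes_{\gr S} \gr M \hookrightarrow \gr B \otimes_{\gr S} \gr M .
\]

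The crux is to identify these graded tensor products with the associated gradeds of the filtered tensor products. Equipping $A \otimes_S M$ and $B \otimes_S M$ with the tensor-product filtrations, there is a canonical surjection $\gr A \otimes_{\gr S} \gr M \twoheadrightarrow \gr(A \otimes_S M)$, and it is precisely the flatness of $\gr M$ over $\gr S$ that upgrades it to an isomorphism—the step where one may first have to pass to $\m$-adic completions to keep the comparison exact. Granting this, the displayed map becomes an injection $\gr(A \otimes_S M) \hookrightarrow \gr(B \otimes_S M)$. Finally I would descend: because the filtrations are separated—$\bigcap_\ell \m^\ell M = 0$ by the Krull intersection theorem on the Noetherian polynomial ring—every nonzero element has a well-defined nonzero initial form, so a filtered map that is injective on $\gr$ is itself injective. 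Applying this to $A \otimes_S M \to B \otimes_S M$ yields the required injectivity, hence the flatness of $M$ over $S$.

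I expect the genuine obstacle to be the isomorphism $\gr(A \otimes_S M) \cong \gr A \otimes_{\gr S} \gr M$: the comparison map is always surjective, but its injectivity is where the flatness hypothesis does its real work and where the separatedness (and, in general, completeness) of the $\m$-adic filtrations cannot be dispensed with. A conceptually cleaner way to organize the bookkeeping is the Rees-algebra deformation: the Rees ring $\bigoplus_{\ell \geqslant 0} \m^\ell$ realizes $S$ and $M$ as generic fibers of flat families degenerating respectively to $\gr S$ and $\gr M$, so that flatness of the special fiber $\gr M$ over $\gr S$ spreads out to flatness of $M$ over $S$; the separatedness hypotheses are exactly what guarantees that no flatness is lost in passing to the limit.
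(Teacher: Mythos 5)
The paper does not prove this statement: it is quoted verbatim from \cite[Theorem 2.20]{JJ} and \cite[Proposition 3.12]{Bjork}, so there is no in-paper argument to compare yours against. Your sketch does follow the standard line of proof from those sources: reduce flatness to injectivity of $J\otimes_S M\to M$ for (finitely generated) ideals $J$, give $J$ the induced filtration so that $\gr J\hookrightarrow\gr S$, use flatness of $\gr M$ to get injectivity at the graded level, identify $\gr J\otimes_{\gr S}\gr M$ with $\gr(J\otimes_S M)$ via the canonical surjection, and descend. You also correctly locate the comparison map for the tensor product as a place where work is needed (though note that its injectivity is actually cheap: the composite $\gr J\otimes_{\gr S}\gr M\to\gr(J\otimes_S M)\to\gr M$ is the multiplication map, which is injective by flatness, so the surjection is forced to be bijective).

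The genuine gap is in the final descent step. What you need is separatedness of the tensor-product filtration on $J\otimes_S M$ (not merely on $M$), and your justification via the Krull intersection theorem is not valid in this setting: $\m$ is the irrelevant ideal of a polynomial ring $R$, which is not contained in the Jacobson radical, and for a general $R$-module $N$ one can have $\bigcap_\ell \m^\ell N = N$. For instance, $N=\K[x]/(x-1)$ satisfies $\m N = N$. This is not a removable technicality: with no separatedness hypothesis the criterion as literally stated fails --- take $S=\K[x]$ and $M=\K[x]/(x-1)$; then $\gr M = 0$ is flat over $\gr S$, yet $M$ is a torsion module and not flat over $S$. The cited references impose completeness/separatedness (Zariskian-type) hypotheses on the filtrations precisely so that the induced filtrations on ideals and on tensor products remain separated and the passage from $\gr$ back to the filtered level is legitimate. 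A complete proof must either state and use those hypotheses, or --- as is implicitly the case in the paper's application, where the generators' initial forms generate $\mathrm{in}(I)$ and the quotient is free over $\K[t]$ --- exploit that the modules involved carry filtrations that are split and separated by construction. As written, your argument proves a statement that is false at the stated level of generality.
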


With the help of these new tools, we can prove a crucial result for this paper.

\begin{lemma}\label{lem:flatness}
The family $\AA^1 \times_{\PP^n_{\boldsymbol{y}}} \mathcal{X} \rightarrow \AA^1$ induced by the morphism \eqref{eq:flatRing} is flat.
\end{lemma}
\begin{proof}
The family induced by \eqref{eq:flatRing} is described by the ideal 
\begin{equation*}
I_{n,d}^{\uu,t} := (F_{n,d}, \e_{ijk}(\mathcal{M}_{n,d}^{\uu,t}),\ \forall\ 0\leqslant i < j < k \leqslant n)
\end{equation*}
(without saturation by $x_0+\cdots+x_n$), where
\begin{equation*}
\mathcal{M}_{n,d}^{\uu,t} := \left(\begin{array}{ccccc}
\vert\uu\vert - \hat{u}_0t & \ldots & \vert\uu\vert - \hat{u}_it & \ldots & \vert\uu\vert - \hat{u}_nt \\
x_0 & \ldots & x_i & \ldots & x_n \\
x_{0}^{d} & \ldots & x_{i}^{d} & \ldots & x_{n}^{d}
\end{array}\right).
\end{equation*} 
We show that the module $\K[x_0,\ldots,x_n,t]/I_{n,d}^{\uu,t}$ is flat over $\K[t]$ via Theorem \ref{th:flatCrit}, i.e.~proving that the module $\gr\left(\K[x_0,\ldots,x_n,t]/I_{n,d}^{\uu,t}\right)$ is flat over $\gr\, \K[t] = \K[t]$. Since we have
\begin{equation*}
 \gr\left(\K[x_0,\ldots,x_n,t]/I_{n,d}^{\uu,t}\right) = \gr\, \K[x_0,\ldots,x_n,t]/\text{in}\big(I_{n,d}^{\uu,t}\big) = \K[x_0,\ldots,x_n,t]/\text{in}\big(I_{n,d}^{\uu,t}\big),
\end{equation*}
we focus on the ideal $I_{n,d}^{\uu,t}$ and the corresponding $\text{in}( I_{n,d}^{\uu,t})$. By linearity, we can split each minor $\e_{ijk}(\mathcal{M}_{n,d}^{\uu,t})$ in the following way
\begin{equation*}
\begin{split}
\begin{array}{|ccc|}
\vert\uu\vert-\hat{u}_i t & \vert\uu\vert-\hat{u}_j t & \vert\uu\vert-\hat{u}_k t \\
x_i & x_j & x_k \\
x_i^d & x_j^d & x_k^d \\
\end{array} &{}=
\vert\uu\vert\begin{array}{|ccc|}
1 & 1 & 1 \\
x_i & x_j & x_k \\
x_i^d & x_j^d & x_k^d \\
\end{array} - t\, \begin{array}{|ccc|}
\hat{u}_i & \hat{u}_j & \hat{u}_k \\
x_i & x_j & x_k \\
x_i^d & x_j^d & x_k^d \\
\end{array} ={}\\
&{} = \vert\uu\vert\mathrm{e}_{ijk}(\mathcal{M}_{n,d}^{\boldsymbol{1}}) - t\mathrm{e}_{ijk}(\mathcal{M}_{n,d}^{\hat{\uu}})
\end{split}
\end{equation*}
and we can also deduce that $\text{in}\big(\vert\uu\vert\mathrm{e}_{ijk}(\mathcal{M}_{n,d}^{\boldsymbol{1}}) - t\mathrm{e}_{ijk}(\mathcal{M}_{n,d}^{\hat{\uu}})\big) = \vert\uu\vert\mathrm{e}_{ijk}(\mathcal{M}_{n,d}^{\boldsymbol{1}}) $. Hence, 
\begin{equation*}
I_{n,d}^{\boldsymbol{1}} = \big(F_{n,d},\mathrm{e}_{ijk}(\mathcal{M}_{n,d}^{\boldsymbol{1}}),\ \forall\ 0\leqslant i<j<k\leqslant n) \subseteq \text{in}\big(I_{n,d}^{\uu,t}\big).
\end{equation*}
To prove that in fact equality holds, we can study the relations among these polynomials. The ideal $I_{n,d}^{\boldsymbol{1}}$ is the ideal of initial forms of $I_{n,d}^{\uu,t}$ if, and only if, each non-trivial syzygy between a pair of generators of $I_{n,d}^{\boldsymbol{1}}$ can be lifted to a syzygy of $I_{n,d}^{\uu,t}$. The polynomial defining the Fermat hypersurface is irreducible and it has no non-trivial syzygies with the other generators of $I^{\boldsymbol{1}}_{n,d}$. The other generators have four irreducible factors
\begin{equation*}
\mathrm{e}_{ijk}(\mathcal{M}^{\boldsymbol{1}}_{n,d}) = (x_i-x_j)(x_j-x_k)(x_k-x_i)\quad\sum_{\mathclap{\cramped{\begin{subarray}{c} (e_i,e_j,e_k)\in\ZZ^3_{\geq 0}\\e_i + e_j + e_k = d-2 \end{subarray}}}} \, x_i^{e_i} x_j^{e_j} x_k^{e_k}
\end{equation*}
so that two distinct generators $\mathrm{e}_{ijk}(\mathcal{M}^{\boldsymbol{1}}_{n,d})$ and $\mathrm{e}_{i'j'k'}(\mathcal{M}^{\boldsymbol{1}}_{n,d})$ have a non-trivial syzygy if, and only if, two of the three indices are equal. Let us consider $\mathrm{e}_{ijk}(\mathcal{M}^{\boldsymbol{1}}_{n,d})$ and $\mathrm{e}_{ijh}(\mathcal{M}^{\boldsymbol{1}}_{n,d})$. Without loss of generality, we may assume $i<j<k<h$.  The corresponding syzygy is
\begin{equation*}
\frac{\mathrm{e}_{ijh}(\mathcal{M}^{\boldsymbol{1}}_{n,d})}{x_i-x_j} \mathrm{e}_{ijk}(\mathcal{M}^{\boldsymbol{1}}_{n,d}) - \frac{\mathrm{e}_{ijk}(\mathcal{M}^{\boldsymbol{1}}_{n,d})}{x_i-x_j} \mathrm{e}_{ijh}(\mathcal{M}^{\boldsymbol{1}}_{n,d}) = 0.
\end{equation*}
 By direct computation, it is possible to check that the syzygy between these two generators of $I_{n,d}^{\boldsymbol{1}}$ can be lifted to the syzygy
\begin{equation*}
\begin{split}
&\frac{\mathrm{e}_{ijh}(\mathcal{M}^{\boldsymbol{1}}_{n,d})}{x_i-x_j} \mathrm{e}_{ijk}(\mathcal{M}^{\boldsymbol{u},t}_{n,d}) - \frac{\mathrm{e}_{ijk}(\mathcal{M}^{\boldsymbol{1}}_{n,d})}{x_i-x_j} \mathrm{e}_{ijh}(\mathcal{M}^{\boldsymbol{u},t}_{n,d}) = {}\\
&\hspace{0.5cm} {} = \vert\boldsymbol{u}\vert x_i x_j \Bigg(\quad\ \sum_{\mathclap{\cramped{\begin{subarray}{c} (e_i,e_j)\in\ZZ^2_{\geq 0}\\ e_i + e_j = d-2 \end{subarray}}}} \ x_i^{e_i} x_j^{e_j}\Bigg)\left( -\e_{ijk}(\mathcal{M}_{n,d}^{\uu,t}) + \e_{ijh}(\mathcal{M}_{n,d}^{\uu,t}) - \e_{ikh}(\mathcal{M}_{n,d}^{\uu,t}) + \e_{jkh}(\mathcal{M}_{n,d}^{\uu,t})\right)\\
\end{split}
\end{equation*}
among\hfill the\hfill generators\hfill of\hfill $I_{n,d}^{\uu,t}$.\hfill
Finally,\hfill $\K[x_0,\ldots,x_n,t]/I_{n,d}^{\uu,t}$\hfill is\hfill flat,\hfill as\hfill the\hfill graded\hfill module\\ $\gr\big(\K[x_0,\ldots,x_n,t]/I_{n,d}^{\uu,t}\big) = \K[x_0,\ldots,x_n,t]/I_{n,d}^{\boldsymbol{1}}$ is free over $\K[t]$ (the quotient does not depend on $t$).
\end{proof}

\begin{theorem}\label{prop:flatness} 
\begin{equation}\label{eq:mainEq}
\MLdeg F_{n,d} = \deg \left( I_{n,d}^{\boldsymbol{1}} \setminus \mathcal{H}\right) = \deg I_{n,d}^{\boldsymbol{1}} - \deg \left( I_{n,d}^{\boldsymbol{1}} + (x_0+\cdots+x_n) \right).
\end{equation}
Moreover, for every point $\vv \in \PP^n_{\yy}$ such that $\vert\vv\vert \neq 0$,
\[
\MLdeg F_{n,d} = \deg I_{n,d}^{\vv} - \deg \left( I_{n,d}^{\vv} + (x_0+\cdots+x_n) \right).
\]
\end{theorem}
\begin{proof}
Let $\uu \in \PP^n_{\yy}$ be a generic point for the projection $\pi_{\yy}:\mathcal{L}_{n,d} \rightarrow \PP^n_{\yy}$, i.e.~such that the length of the fiber $\pi_{\yy}^{-1}(\uu)$ is equal to the ML degree of $F_{n,d}$. We may assume by Proposition \ref{prop:daniele_smooth} that $\Proj \K[x]/I^{\uu}_{n,d}$ is smooth along the hyperplane $x_0 + \cdots + x_n = 0$,  $\vert \uu \vert \neq 0$ and $u_i \neq 0,\ \forall\ i = 0,\ldots,n$. Proving \eqref{eq:mainEq} is equivalent to prove $\deg I_{n,d}^{\uu} = \deg I_{n,d}^{\boldsymbol{1}}$. Indeed, by Lemma \ref{prop:davide} and Proposition \ref{prop:daniele_smooth} (Remark \ref{rk:also1}), we have $\deg ( I_{n,d}^{\uu}\setminus \mathcal{H}) = \deg I_{n,d}^{\uu} - \deg \big( I_{n,d}^{\uu} + (x_0+\cdots+x_n) \big)$, $\deg ( I_{n,d}^{\boldsymbol{1}}\setminus \mathcal{H}) = \deg I_{n,d}^{\boldsymbol{1}} - \deg \big( I_{n,d}^{\boldsymbol{1}} + (x_0+\cdots+x_n) \big)$
and $I_{n,d}^{\uu} + (x_0+\cdots+x_n) =I_{n,d}^{\boldsymbol{1}} + (x_0+\cdots+x_n)$ by Lemma \ref{prop:notDependU}.
The equality $\deg I_{n,d}^{\uu} = \deg I_{n,d}^{\boldsymbol{1}}$ follows from Lemma \ref{lem:flatness}. In fact, the ideals $I_{n,d}^{\boldsymbol{1}}$ and $I_{n,d}^{\uu}$ correspond respectively to the fibers over $t=0$ and $t=1$ of the family $\AA^1 \times_{\PP^n_{\boldsymbol{y}}} \mathcal{X} \rightarrow \AA^1$ and in the case of a family of zero-dimensional schemes with an irreducible reduced base, flatness is equivalent to the fact that all the fibers over closed points have the same degree \cite[III, Theorem 9.9]{Hartshorne}. 

To prove the last part of the statement, it suffices to repeat the same reasoning starting with $\MLdeg F_{n,d} = \deg I_{n,d}^{\boldsymbol{1}} - \deg \big( I_{n,d}^{\boldsymbol{1}} + (x_0+\cdots+x_n) \big)$.
\end{proof}

\begin{example}\label{ex:referee}
Consider the Fermat surface $F_{3,3} \subset \PP^3$. Its ML degree is $30$ and we can determine it by computing the multidegree of the likelihood correspondence $\mathcal{L}_{3,3} \subset \PP^3_{\xx} \times \PP^3_{\yy}$. Now, we look at the behavior at the points $\boldsymbol{1} = [1:1:1:1]$, $\uu = [2:3:5:6]$, $\vv_1 = [2:3:-7:7]$, $\vv_2 = [2:3:5:0]$, $\vv_3 = [1:3:-6:2]$ and  $\vv_4 = [1:-1:1:-1]$. By Theorem \ref{prop:flatness}, we know that
\[
\begin{split}
30 = \MLdeg F_{3,3} &{}= \deg \big(I_{3,3}^{\boldsymbol{1}} \setminus \mathcal{H} \big) = \deg I_{3,3}^{\boldsymbol{1}} - \deg \big( I_{3,3}^{\boldsymbol{1}} + (x_0+x_1+x_2+x_3) \big)= {}\\
&{}= \deg I_{3,3}^{\uu} - \deg \big( I_{3,3}^{\uu} + (x_0+x_1+x_2+x_3) \big) = {}\\
&{}= \deg I_{3,3}^{\vv_1} - \deg \big( I_{3,3}^{\vv_1} + (x_0+x_1+x_2+x_3) \big) = {}\\
&{}= \deg I_{3,3}^{\vv_2} - \deg \big( I_{3,3}^{\vv_2} + (x_0+x_1+x_2+x_3) \big),\\
\end{split}
\]
as $\vert \uu \vert \neq 0$, $\vert \vv_1 \vert \neq 0$ and $\vert \vv_2 \vert \neq 0$.
However, the theorem says nothing about the critical points of the likelihood functions $\ell_{\uu}$, $\ell_{\vv_1}$, $\ell_{\vv_2}$ not lying over $\mathcal{H}$. By direct computation, we can check that $\uu$ is \lq\lq generic\rq\rq\ for the projection map $\pi_{\yy}: \mathcal{L}_{3,3} \rightarrow \PP^3_{\yy}$, i.e.~$\deg I_{3,3}^{\uu} - \deg \big( I_{3,3}^{\uu} + (x_0+x_1+x_2+x_3) \big) = \deg \big(I_{3,3}^{\uu} \setminus \mathcal{H}\big)$, whereas $\ell_{\vv_1}$ and $\ell_{\vv_2}$ have more critical points lying on $\mathcal{H}$. More precisely, $\deg \big(I_{3,3}^{\vv_1}\setminus \mathcal{H}\big) = 28$ and $\deg \big(I_{3,3}^{\vv_2}\setminus \mathcal{H}\big) = 21$. To understand this behavior, we notice that the data vectors $\vv_1$ and $\vv_2$ do not satisfy the assumption of Theorem \ref{th:1.15HS} and that $\vv_2$ does not even satisfy the hypothesis of Lemma \ref{prop:davide}.

As $\vert \vv_3 \vert = \vert \vv_4 \vert = 0$, the last two points do not even satisfy the weaker assumption of Theorem \ref{prop:flatness} and can not be use to determine the ML degree of $F_{3,3}$. In the case of $\vv_3$, we check that $\ell_{\vv_3}$ has a finite number of critical points and that $\deg I_{3,3}^{\vv_3} = \deg I_{3,3}^{\boldsymbol{1}}$ (even if we can not deduce it from Lemma \ref{lem:flatness}), but Lemma \ref{prop:notDependU} does not apply. Indeed, $\deg \big(I_{3,3}^{\vv_3} + (x_0+x_1+x_2+x_3)) = 12 \neq \deg \big(I_{3,3}^{\boldsymbol{1}} + (x_0+x_1+x_2+x_3))$. 
The case of $\vv_4$ is even more special as the ideal $I_{3,3}^{\vv_4}$ describing the entire set of critical points of $\ell_{\vv_4}$ defines a pair of lines plus 22 points. These 22 points represent the set of critical points of $\ell_{\vv_4}$ not lying on $\mathcal{H}$.
\end{example}

\section{Determine solutions}\label{solutionfinding}

The result of the previous section allows us to restrict our attention to the ideal $I^{\boldsymbol{1}}_{n,d}$ generated by $f_{n,d}$ and by the $3\times3$ minors of the matrix
\begin{equation*}
\mathcal{M}^{\boldsymbol{1}}_{n,d}=
\left(\begin{array}{cccc}
1 & 1 & \ldots & 1 \\
x_0 & x_1 & \ldots & x_n \\
x_{0}^{d} & x_{1}^{d} & \ldots & x_{n}^{d}
\end{array}
\right).
\end{equation*}
It is clear that if two coordinates coincide, then any $3 \times 3$-minor of $\mathcal{M}_{n,d}^{\boldsymbol{1}}$ involving the two corresponding columns vanishes. Furthermore, we know that there are at least two distinct coordinates, since the point $[1:\ldots:1]$ does not lie on the Fermat hypersurface $F_{n,d}$. The next natural question is about finding the maximal number of distinct coordinates for a critical point. Consider a critical point with at least three distinct coordinates $p_i,p_j,p_k$. Since the $3\times 3$ minors of $\mathcal{M}_{n,d}^{\boldsymbol{1}}$ split into factors
\begin{equation}\label{eq:minor1ijk}
\det \left(\begin{array}{ccc}
1 & 1 & 1 \\
x_i & x_j & x_k \\
x_{i}^{d} & x_{j}^{d} & x_{k}^{d}
\end{array}
\right) = (x_i - x_j)(x_j - x_k)(x_k-x_i)\quad \sum_{\mathclap{\cramped{\begin{subarray}{c} (e_i,e_j,e_k)\in\ZZ^3_{\geq 0}\\e_i + e_j + e_k = d-2 \end{subarray}}}}\, x_i^{e_i} x_j^{e_j} x_k^{e_k},
\end{equation}
any $3$-tuple of distinct coordinates of a critical point is a solution to the polynomial 
\begin{equation}\label{eq:detMinor}
\sum_{\mathclap{\cramped{\begin{subarray}{c} (e_i,e_j,e_k)\in\ZZ^3_{\geq 0}\\e_i + e_j + e_k = d-2 \end{subarray}}}} \, x_i^{e_i} x_j^{e_j} x_k^{e_k}.
\end{equation}

\begin{lemma}\label{lem:diffValues}
Consider the Fermat hypersurface of degree $d$. The coordinates of each solution of $I_{n,d}^{\boldsymbol{1}}\setminus \mathcal{H}$ can take at most $\min\{d,n+1\}$ distinct values. 
\end{lemma}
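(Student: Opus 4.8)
The plan is to split the bound $\min\{d,n+1\}$ into its two halves. The inequality by $n+1$ is immediate: a point of $\PP^n$ has only $n+1$ coordinates, so at most $n+1$ distinct values can occur among them. All the real content lies in proving that the number of distinct values is also at most $d$, and for this I would rely entirely on the vanishing of the $3\times 3$ minors, making no use of the Fermat equation $f_{n,d}$ itself.

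The starting point is the factorization already recorded in \eqref{eq:minor1ijk}. Whenever $p_i,p_j,p_k$ are three \emph{distinct} coordinates of a solution of $I_{n,d}^{\boldsymbol{1}}\setminus\mathcal{H}$, the minor $\e_{ijk}(\mathcal{M}^{\boldsymbol{1}}_{n,d})$ vanishes; since the Vandermonde factor $(p_i-p_j)(p_j-p_k)(p_k-p_i)$ is then nonzero, the symmetric factor \eqref{eq:detMinor} must vanish. Thus every triple of distinct coordinate values is a zero of the complete homogeneous polynomial $h_{d-2}(x_i,x_j,x_k)=\sum_{e_i+e_j+e_k=d-2}x_i^{e_i}x_j^{e_j}x_k^{e_k}$. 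The key step is then to \emph{freeze two variables}. Let $v_1,\dots,v_m$ be the distinct values taken by the coordinates of a fixed solution. If $m\leq 2$ there is nothing to prove (recall $d\geq 2$, since for $d=1$ the hypersurface lies inside $\mathcal{H}$), so assume $m\geq 3$. Setting $a:=v_1$, $b:=v_2$, I would consider the univariate polynomial $g(x):=h_{d-2}(a,b,x)=\sum_{e_1+e_2+e_3=d-2}a^{e_1}b^{e_2}x^{e_3}$. Its highest power of $x$ is attained only for $e_3=d-2$, $e_1=e_2=0$, with coefficient $a^0b^0=1$, so $g$ is monic of degree $d-2$; in particular $g\not\equiv 0$ and it has at most $d-2$ roots. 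By the previous observation each of the remaining values $v_3,\dots,v_m$ satisfies $g(v_r)=0$ and is therefore a root of $g$, and since these values are pairwise distinct we obtain $m-2\leq d-2$, i.e.\ $m\leq d$. Combined with $m\leq n+1$, this yields $m\leq\min\{d,n+1\}$.

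The only place I expect could be mishandled is the claim that $g$ has degree \emph{exactly} $d-2$, i.e.\ that its leading coefficient in $x$ does not accidentally cancel for special values of $a$ and $b$. This is what makes the counting argument work, and it is clear here because that coefficient is identically $1$, independent of $a$ and $b$; everything else reduces to bookkeeping about distinct roots of a fixed-degree polynomial. I would also remark that the same reasoning explains the two extreme regimes transparently: when $d=2$ the polynomial $g$ is the constant $1$, so three distinct values are impossible, whereas for large $d$ the genuine restriction becomes $n+1$.
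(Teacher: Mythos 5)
Your argument is correct and is essentially the paper's own proof: both fix two of the distinct values and observe that, by the factorization \eqref{eq:minor1ijk}, every further distinct value must be a root of the univariate specialization of the complete homogeneous polynomial \eqref{eq:detMinor}, which has degree $d-2$, giving at most $d$ values in total alongside the trivial bound $n+1$. Your explicit check that the leading coefficient in $x$ equals $1$ (so the polynomial is genuinely of degree $d-2$ and nonzero) is a small but welcome point of care that the paper leaves implicit.
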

\begin{proof}
Consider $k$ distinct values $p_1,\ldots,p_k$ and assume they appear as coordinates of a critical point. Clearly $k\leqslant n+1$. Furthermore, we see from polynomial \eqref{eq:detMinor} that once we fix $p_1$ and $p_2$, the other values $p_3,\ldots,p_k$ have to be roots of the univariate polynomial 
\begin{equation*}
\sum_{e_k=0}^{d-2} \left(\qquad\sum_{\mathclap{\cramped{e_i + e_j = d-2-e_k}}}\ \,p_1^{e_i} p_2^{e_j} \right) x_k^{e_k},
\end{equation*}
which has degree $d-2$. Thus, $k$ can be at most $d$.
\end{proof}

\begin{definition}
Let $\mathsf{a} := (a_1,\ldots,a_s)$ be an integer partition of $n+1$ such that $a_1 \geqslant \cdots \geqslant a_s \geqslant 1$ and  $2\leqslant s \leqslant \min\{d,n+1\}$. We say that a critical point $\boldsymbol{p}$ of the ML degree problem of the Fermat hypersurface $F_{n,d}$ is of type $(a_1,\ldots,a_s)$, or a \emph{$(a_1,\ldots,a_s)$-critical point}, if $\boldsymbol{p}$ has $s$ distinct coordinates and each distinct value $p_i$ appears $a_i$ times as coordinate of the point, for $1\leqslant i\leqslant s$.
\end{definition}

Let us recall some standard notation for partitions. If some integer $b$ is repeated $\alpha$ times in $\mathsf{a}$, we will write $b^{\alpha}$ instead of $b,\ldots,b$. Moreover, for any partition $\mathsf{a}$, we denote by $\alpha_{\mathsf{a}}$ the set of \lq\lq exponents\rq\rq. For instance, if $\mathsf{a} = (4,3,3,3,2,1,1) = (4,3^3,2,1^2)$, then $\alpha_{\mathsf{a}} = (1,3,1,2)$. The length of a partition $\mathsf{a}$ is the sum of all exponents $\vert\alpha_{\mathsf{a}}\vert$. In the previous example the partition has length 7.

\medskip

Let us define $\mathcal{P}_{n+1,d}$ as the set of partitions of $n+1$ of length at most $\min\{d,n+1\}$. We will present each element in $\mathcal{P}_{n+1,d}$ as a vector whose components form a sequence of weakly decreasing positive integers. 

Using the notation we introduced above, the property stated in Lemma \ref{lem:diffValues} translates in the following formula to compute the ML degree of the Fermat hypersurface:
\begin{equation*}
\MLdeg F_{n,d} = \quad\sum_{\mathclap{\cramped{\mathsf{a} \in \mathcal{P}_{n+1,d}}}}\, \#\{ \mathsf{a}\text{-critical points} \}.
\end{equation*}

Let $\mathsf{a} = (a_1,\ldots,a_s)$ be a partition in $\mathcal{P}_{n+1,d}$ and let $\alpha_{\mathsf{a}} = (\alpha_1,\ldots,\alpha_\sigma)$ be the corresponding set of exponents. We consider the integers
\begin{equation*}
 c_{\mathsf{a}} := \binom{n+1}{a_1} \cdot \binom{n+1-a_1}{a_2} \cdot\ldots\cdot \binom{n+1-(a_1+\ldots + a_{s-2})}{a_{s-1}} \text{ and } \alpha_{\mathsf{a}}! :=  \alpha_1! \cdot\ldots\cdot \alpha_\sigma!.
\end{equation*}
In addition, we want to determine the ideal that defines the subset of $\mathsf{a}$-critical points of $F_{n,d}$ by specifying the identifications among the coordinates. We can compute this ideal from $I_{n,d}^{\boldsymbol{1}}$ by adding the linear form $x_i - x_j$ if the $i$-th and $j$-th coordinates have to be equal and by saturating with respect to the linear form $x_k - x_h$ if the $k$-th and $h$-th coordinates have to be different. 
The identifications allow to rewrite the ideal in terms of $s$ variables, say $z_1,\ldots,z_s$, and the ideal written in terms of these variables (up to relabeling of the variables) does not depend on the identifications. Thus, we define
\begin{equation}\label{eq:partitionIdeal}
\begin{split}
& I_{\mathsf{a}}^d := \left(a_1z_1^d + \cdots + a_s z_s^d, \e_{ijk}(\mathcal{M}^{\boldsymbol{1}}_{s-1,d}),\ \forall\ 1 \leqslant i < j < k \leqslant s\right) : \Big(\prod_{i<j} (z_i - z_j)\Big),\quad \\
& I_{\mathsf{a}}^d \setminus \mathcal{H} := I_{\mathsf{a}}^d : (a_1z_1 + \cdots + a_s z_s),\qquad I_{\mathsf{a}}^d \cap \mathcal{H} := I_{\mathsf{a}}^d + (a_1z_1 + \cdots + a_s z_s),
\end{split}
\end{equation}
where $\e_{ijk}(\mathcal{M}^{\boldsymbol{1}}_{s-1,d})$ is the polynomial \eqref{eq:minor1ijk} in the variables $z_i$, $z_j$ and $z_k$.

\begin{theorem}\label{th:partitioning}
\begin{equation*}
\MLdeg F_{n,d}=\quad\sum_{\mathclap{\cramped{\mathsf{a}\in \mathcal{P}_{n+1,d}}}}\, c_{\mathsf{a}}\frac{\deg \big( I_{\mathsf{a}}^d \setminus \mathcal{H}\big)}{\alpha_{\mathsf{a}}!}=\quad\sum_{\mathclap{\cramped{\mathsf{a}\in \mathcal{P}_{n+1,d}}}}\, c_{\mathsf{a}}\frac{\deg I_{\mathsf{a}}^d - \deg \big( I_{\mathsf{a}}^d \cap \mathcal{H}\big)}{\alpha_{\mathsf{a}}!}
\end{equation*}
\end{theorem}
\begin{proof}
In order to prove the statement, we show that the number of $\mathsf{a}$-critical points equals $c_{\mathsf{a}}\frac{\deg (I_{\mathsf{a}}^d \setminus \mathcal{H})}{\alpha_{\mathsf{a}}!}$ for each $\mathsf{a} \in \mathcal{P}_{n+1,d}$.

Let $\boldsymbol{p} = [p_1:\ldots:p_s]$ be a solution of the ideal $I_{\mathsf{a}}^d \setminus \mathcal{H}$. It corresponds to the $\mathsf{a}$-critical points $\boldsymbol{q} = [q_0:\ldots:q_n]$ having $a_i$ coordinates equal to $p_i$ (up to a fixed scalar) for all $i$. As the symmetric group $\mathcal{S}_{n+1}$ acts on the solutions of $I_{n,d}^{\boldsymbol{1}} \setminus \mathcal{H}$, the number of points $\boldsymbol{q}$ corresponding to $\boldsymbol{p}$ is equal to the number of ways of partitioning the set of indices $\{0,\ldots,n\}$ in $s$ subsets with cardinality given by the entries of $\mathsf{a}$. This is exactly $c_{\mathsf{a}}$.

To complete the proof we need to determine how many \lq\lq different\rq\rq\ solutions the ideal $I_{\mathsf{a}}^d \setminus \mathcal{H}$ has. For instance, if $a_1 = a_2$ and $\boldsymbol{p} = [p_1:p_2:\ldots:p_s]$ is a solution of the ideal $I_{\mathsf{a}}^d \setminus \mathcal{H}$, then also $\boldsymbol{p}' = [p_2:p_1:\ldots:p_s]$ is a solution. But $\boldsymbol{p}$ and $\boldsymbol{p}'$ correspond to the same subset of $\mathsf{a}$-critical points with $n+1$ coordinates since we are considering the action of $\mathcal{S}_{n+1}$ on them.  Looking at the definition \eqref{eq:partitionIdeal} of $I_{\mathsf{a}}^d \setminus \mathcal{H}$, we notice that the ideals do not change if we swap two variables $z_i,z_j$ such that $a_i=a_j$. The repetitions of the entries of $\mathsf{a}$ are counted by the sequence $\alpha_{\mathsf{a}}$ and $I_{\mathsf{a}}^d \setminus \mathcal{H}$ is invariant under the action of the group $\mathcal{S}_{\alpha_1}\times\cdots\times\mathcal{S}_{\alpha_\sigma}$. Hence, the solutions we are interested in are the solutions of $I_{\mathsf{a}}^d \setminus \mathcal{H}$ modulo the action of this group of symmetries, i.e.~$\deg \big(I_{\mathsf{a}}^d \setminus \mathcal{H}\big)/\alpha_{\mathsf{a}}!$. The equality $\deg \big(I_{\mathsf{a}}^d \setminus \mathcal{H}\big) = \deg I_{\mathsf{a}}^d - \deg \big(I_{\mathsf{a}}^d \cap \mathcal{H}\big)$ follows directly from the equality $\deg \big(I_{n,d}^{\boldsymbol{1}} \setminus \mathcal{H}\big) = \deg I_{n,d}^{\boldsymbol{1}} - \deg \big(I_{n,d}^{\boldsymbol{1}} + (x_0+\cdots+c_n)\big)$.
\end{proof}

\begin{table}
\caption{The $\mathsf{a}$-critical points of the quartic Fermat hypersurface $F_{n,4}$ for $n=2,3,4$.}
\begin{center}
\begin{tikzpicture}[scale = 0.94]
\draw [-,thick] (0,0.5) -- (13,0.5);
\draw [-,thick] (0,-0.1) -- (13,-0.1);
\draw [-,thin] (0,-0.6) -- (13,-0.6);
\draw [-,thick] (0,-1.1) -- (13,-1.1);

\draw [-,thick] (10,-1.2) -- (13,-1.2) -- (13,-1.8) -- (10,-1.8) -- cycle;

\draw [-,thick] (0,0.5) -- (0,-1.1);
\draw [-,thin] (2,0.5) -- (2,-1.1);
\draw [-,thin] (4,0.5) -- (4,-1.1);
\draw [-,thin] (6,0.5) -- (6,-1.1);
\draw [-,thin] (8,0.5) -- (8,-1.1);
\draw [-,thin] (10,0.5) -- (10,-1.1);
\draw [-,thick] (13,0.5) -- (13,-1.1);

\node at (1,0.2) [] {\footnotesize$\mathsf{a} \in \mathcal{P}_{3,4}$};
\node at (3,0.2) [] {\footnotesize$\deg I_{\mathsf{a}}^4$};
\node at (5,0.2) [] {\footnotesize$\deg \big(I_{\mathsf{a}}^4\cap\mathcal{H}\big)$};
\node at (7,0.2) [] {\footnotesize$c_{\mathsf{a}}$};
\node at (9,0.2) [] {\footnotesize$\alpha_{\mathsf{a}}!$};
\node at (11.5,0.2) [] {\footnotesize$\mathsf{a}$-critical points};

\node at (1,-0.35) [] {\footnotesize$(2,1)$};
\node at (3,-0.35) [] {\footnotesize4};
\node at (5,-0.35) [] {\footnotesize0};
\node at (7,-0.35) [] {\footnotesize3};
\node at (9,-0.35) [] {\footnotesize1};
\node at (11.5,-0.35) [] {\footnotesize12};

\node at (1,-0.85) [] {\footnotesize$(1,1,1)$};
\node at (3,-0.85) [] {\footnotesize8};
\node at (5,-0.85) [] {\footnotesize2};
\node at (7,-0.85) [] {\footnotesize6};
\node at (9,-0.85) [] {\footnotesize6};
\node at (11.5,-0.85) [] {\footnotesize6};

\node at (9,-1.55) [] {\footnotesize$\MLdeg F_{2,4}$};
\node at (11.5,-1.5) [] {\small$18$};


\draw [-,thick] (0,-2.5) -- (13,-2.5);
\draw [-,thick] (0,-3.1) -- (13,-3.1);
\draw [-,thin] (0,-3.6) -- (13,-3.6);
\draw [-,thin] (0,-4.1) -- (13,-4.1);
\draw [-,thin] (0,-4.6) -- (13,-4.6);
\draw [-,thick] (0,-5.1) -- (13,-5.1);

\draw [-,thick] (0,-2.5) -- (0,-5.1);
\draw [-,thin] (2,-2.5) -- (2,-5.1);
\draw [-,thin] (4,-2.5) -- (4,-5.1);
\draw [-,thin] (6,-2.5) -- (6,-5.1);
\draw [-,thin] (8,-2.5) -- (8,-5.1);
\draw [-,thin] (10,-2.5) -- (10,-5.1);
\draw [-,thick] (13,-2.5) -- (13,-5.1);

\draw [-,thick] (10,-5.2) -- (13,-5.2) -- (13,-5.8) -- (10,-5.8) -- cycle;

\node at (1,-2.8) [] {\footnotesize$\mathsf{a} \in \mathcal{P}_{4,4}$};
\node at (3,-2.8) [] {\footnotesize$\deg I_{\mathsf{a}}^4$};
\node at (5,-2.8) [] {\footnotesize$\deg \big(I_{\mathsf{a}}^4\cap\mathcal{H}\big)$};
\node at (7,-2.8) [] {\footnotesize$c_{\mathsf{a}}$};
\node at (9,-2.8) [] {\footnotesize$\alpha_{\mathsf{a}}!$};
\node at (11.5,-2.8) [] {\footnotesize$\mathsf{a}$-critical points};

\node at (1,-3.35) [] {\footnotesize$(3,1)$};
\node at (3,-3.35) [] {\footnotesize4};
\node at (5,-3.35) [] {\footnotesize0};
\node at (7,-3.35) [] {\footnotesize4};
\node at (9,-3.35) [] {\footnotesize1};
\node at (11.5,-3.35) [] {\footnotesize16};

\node at (1,-3.85) [] {\footnotesize$(2,2)$};
\node at (3,-3.85) [] {\footnotesize4};
\node at (5,-3.85) [] {\footnotesize0};
\node at (7,-3.85) [] {\footnotesize6};
\node at (9,-3.85) [] {\footnotesize2};
\node at (11.5,-3.85) [] {\footnotesize12};

\node at (1,-4.35) [] {\footnotesize$(2,1,1)$};
\node at (3,-4.35) [] {\footnotesize8};
\node at (5,-4.35) [] {\footnotesize0};
\node at (7,-4.35) [] {\footnotesize12};
\node at (9,-4.35) [] {\footnotesize2};
\node at (11.5,-4.35) [] {\footnotesize48};

\node at (1,-4.85) [] {\footnotesize$(1,1,1,1)$};
\node at (3,-4.85) [] {\footnotesize8};
\node at (5,-4.85) [] {\footnotesize8};
\node at (7,-4.85) [] {\footnotesize24};
\node at (9,-4.85) [] {\footnotesize24};
\node at (11.5,-4.85) [] {\footnotesize0};

\node at (9,-5.55) [] {\footnotesize$\MLdeg F_{3,4}$};
\node at (11.5,-5.5) [] {\small$76$};


\draw [-,thick] (0,-6.5) -- (13,-6.5);
\draw [-,thick] (0,-7.1) -- (13,-7.1);
\draw [-,thin] (0,-7.6) -- (13,-7.6);
\draw [-,thin] (0,-8.1) -- (13,-8.1);
\draw [-,thin] (0,-8.6) -- (13,-8.6);
\draw [-,thin] (0,-9.1) -- (13,-9.1);
\draw [-,thick] (0,-9.6) -- (13,-9.6);

\draw [-,thick] (0,-6.5) -- (0,-9.6);
\draw [-,thin] (2,-6.5) -- (2,-9.6);
\draw [-,thin] (4,-6.5) -- (4,-9.6);
\draw [-,thin] (6,-6.5) -- (6,-9.6);
\draw [-,thin] (8,-6.5) -- (8,-9.6);
\draw [-,thin] (10,-6.5) -- (10,-9.6);
\draw [-,thick] (13,-6.5) -- (13,-9.6);

\draw [-,thick] (10,-9.7) -- (13,-9.7) -- (13,-10.3) -- (10,-10.3) -- cycle;

\node at (1,-6.8) [] {\footnotesize$\mathsf{a} \in \mathcal{P}_{5,4}$};
\node at (3,-6.8) [] {\footnotesize$\deg I_{\mathsf{a}}^4$};
\node at (5,-6.8) [] {\footnotesize$\deg \big(I_{\mathsf{a}}^4\cap\mathcal{H}\big)$};
\node at (7,-6.8) [] {\footnotesize$c_{\mathsf{a}}$};
\node at (9,-6.8) [] {\footnotesize$\alpha_{\mathsf{a}}!$};
\node at (11.5,-6.8) [] {\footnotesize$\mathsf{a}$-critical points};

\node at (1,-7.35) [] {\footnotesize$(4,1)$};
\node at (3,-7.35) [] {\footnotesize4};
\node at (5,-7.35) [] {\footnotesize0};
\node at (7,-7.35) [] {\footnotesize5};
\node at (9,-7.35) [] {\footnotesize1};
\node at (11.5,-7.35) [] {\footnotesize20};

\node at (1,-7.85) [] {\footnotesize$(3,2)$};
\node at (3,-7.85) [] {\footnotesize4};
\node at (5,-7.85) [] {\footnotesize0};
\node at (7,-7.85) [] {\footnotesize10};
\node at (9,-7.85) [] {\footnotesize1};
\node at (11.5,-7.85) [] {\footnotesize40};

\node at (1,-8.35) [] {\footnotesize$(3,1,1)$};
\node at (3,-8.35) [] {\footnotesize8};
\node at (5,-8.35) [] {\footnotesize0};
\node at (7,-8.35) [] {\footnotesize20};
\node at (9,-8.35) [] {\footnotesize2};
\node at (11.5,-8.35) [] {\footnotesize80};

\node at (1,-8.85) [] {\footnotesize$(2,2,1)$};
\node at (3,-8.85) [] {\footnotesize8};
\node at (5,-8.85) [] {\footnotesize0};
\node at (7,-8.85) [] {\footnotesize30};
\node at (9,-8.85) [] {\footnotesize2};
\node at (11.5,-8.85) [] {\footnotesize120};

\node at (1,-9.35) [] {\footnotesize$(2,1,1,1)$};
\node at (3,-9.35) [] {\footnotesize8};
\node at (5,-9.35) [] {\footnotesize2};
\node at (7,-9.35) [] {\footnotesize60};
\node at (9,-9.35) [] {\footnotesize6};
\node at (11.5,-9.35) [] {\footnotesize60};

\node at (9,-10.05) [] {\footnotesize$\MLdeg F_{4,4}$};
\node at (11.5,-10) [] {\small$320$};

\end{tikzpicture}
\end{center}
\end{table}

We conclude this section with a more detailed description of the ideal $I_{\mathsf{a}}^d$, where we try to make the saturation by the linear forms $z_i-z_j$ more explicit. Let us introduce some notation that will be useful later. For any set of indices $\mathcal{I} = \{i_1,\ldots,i_r\}$ of variables $z_j$ and for any non-negative integer $m$, let
\begin{equation*}
\boldsymbol{z}_{\mathcal{I}}^{(0)} := 1\qquad\text{and}\qquad \boldsymbol{z}_{\mathcal{I}}^{(m)} :=\quad \sum_{\mathclap{\cramped{\begin{subarray}{c} (e_1,\ldots,e_r) \in \ZZ^{r}_{\geqslant 0}\\
e_1 + \cdots + e_r = m \end{subarray}}}}\, z_{i_1}^{e_1}\cdots z_{i_r}^{e_r},\ \text{for } m > 0.
\end{equation*} 

\begin{lemma}\label{lem:deletingLinear}
Consider a set of $s$ indices $\mathcal{I}$ and fix two distinct elements $h$ and $k$ in $\mathcal{I}$. For every positive integer $m$,
\begin{equation*}
\boldsymbol{z}_{\mathcal{I}\setminus k}^{(m)} - \boldsymbol{z}_{\mathcal{I}\setminus h}^{(m)} = (z_h - z_k)\boldsymbol{z}_{\mathcal{I}}^{(m-1)}. 
\end{equation*}
\end{lemma}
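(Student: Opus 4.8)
The plan is to reduce the identity to a single, easily verified deletion recurrence for the polynomials $\boldsymbol{z}_{\mathcal{I}}^{(m)}$, which are precisely the complete homogeneous symmetric polynomials of degree $m$ in the variables indexed by $\mathcal{I}$. The key observation is that for any index $k \in \mathcal{I}$ and any $m \geq 1$ one has
\[
\boldsymbol{z}_{\mathcal{I}}^{(m)} = \boldsymbol{z}_{\mathcal{I}\setminus k}^{(m)} + z_k\,\boldsymbol{z}_{\mathcal{I}}^{(m-1)}.
\]
I would establish this by partitioning the monomials of degree $m$ occurring in $\boldsymbol{z}_{\mathcal{I}}^{(m)}$ according to whether the variable $z_k$ appears: the monomials in which $z_k$ does not appear are exactly the monomials of $\boldsymbol{z}_{\mathcal{I}\setminus k}^{(m)}$, while every monomial in which $z_k$ does appear factors uniquely as $z_k$ times a monomial of degree $m-1$ in all the variables of $\mathcal{I}$, accounting for the summand $z_k\,\boldsymbol{z}_{\mathcal{I}}^{(m-1)}$.

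Granting the recurrence, the lemma follows at once. Applying it with the deleted index $k$ and then with the deleted index $h$ gives
\[
\boldsymbol{z}_{\mathcal{I}\setminus k}^{(m)} = \boldsymbol{z}_{\mathcal{I}}^{(m)} - z_k\,\boldsymbol{z}_{\mathcal{I}}^{(m-1)} \qquad\text{and}\qquad \boldsymbol{z}_{\mathcal{I}\setminus h}^{(m)} = \boldsymbol{z}_{\mathcal{I}}^{(m)} - z_h\,\boldsymbol{z}_{\mathcal{I}}^{(m-1)},
\]
and subtracting the two expressions cancels the common term $\boldsymbol{z}_{\mathcal{I}}^{(m)}$, leaving $(z_h - z_k)\,\boldsymbol{z}_{\mathcal{I}}^{(m-1)}$, which is exactly the claimed right-hand side.

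Alternatively, one could encode the whole family $\{\boldsymbol{z}_{\mathcal{I}}^{(m)}\}_{m \geq 0}$ in the generating series $\prod_{i \in \mathcal{I}}(1 - z_i t)^{-1}$: deleting an index multiplies the series by the corresponding linear factor, so the difference of the two deleted series equals $(z_h - z_k)\,t$ times the full series, and comparing the coefficients of $t^m$ yields the identity in one stroke. I expect no genuine obstacle here; the only step requiring a moment of care is the bookkeeping in the deletion recurrence, namely verifying that sending a monomial containing $z_k$ to the quotient by $z_k$ is a bijection onto the monomials of $\boldsymbol{z}_{\mathcal{I}}^{(m-1)}$, compatible with the fact that all these coefficients equal $1$. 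Once that is confirmed, everything reduces to pure cancellation.
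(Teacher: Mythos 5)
Your proof is correct, but it takes a different route from the paper's. The paper expands $\boldsymbol{z}_{\mathcal{I}\setminus k}^{(m)}$ as a polynomial in $z_h$ and $\boldsymbol{z}_{\mathcal{I}\setminus h}^{(m)}$ as a polynomial in $z_k$, observes that the coefficients $\boldsymbol{z}_{\mathcal{I}\setminus h,k}^{(m-e)}$ agree, and then extracts the factor $z_h-z_k$ from each difference $z_h^e-z_k^e$ via the geometric-sum identity $z_h^e-z_k^e=(z_h-z_k)\boldsymbol{z}_{h,k}^{(e-1)}$; the final step reassembles the double sum into $\boldsymbol{z}_{\mathcal{I}}^{(m-1)}$. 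You instead isolate the single-variable deletion recurrence $\boldsymbol{z}_{\mathcal{I}}^{(m)}=\boldsymbol{z}_{\mathcal{I}\setminus k}^{(m)}+z_k\,\boldsymbol{z}_{\mathcal{I}}^{(m-1)}$ for the complete homogeneous symmetric polynomials, apply it once for $k$ and once for $h$, and subtract; the common term cancels and the identity drops out with no reassembly step. Your decomposition is arguably cleaner: the only combinatorial content is the bijection between monomials divisible by $z_k$ and monomials of degree $m-1$ in all of $\mathcal{I}$, whereas the paper's argument requires recognizing the nested sum $\sum_e \boldsymbol{z}_{\mathcal{I}\setminus h,k}^{(m-e)}\boldsymbol{z}_{h,k}^{(e-1)}$ as $\boldsymbol{z}_{\mathcal{I}}^{(m-1)}$ (and, incidentally, the paper's displayed summation bounds should read $e=1,\dots,m$ rather than $e=1,\dots,s$). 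Your generating-function remark, using $\prod_{i\in\mathcal{I}}(1-z_it)^{-1}$, packages the same cancellation into a one-line computation and makes it transparent why the factor $z_h-z_k$ must appear; either of your two arguments would serve as a complete proof.
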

\begin{proof}
By looking at $\boldsymbol{z}_{\mathcal{I}\setminus k}^{(m)}$ as an univariate polynomial in the variable $z_h$ and $\boldsymbol{z}_{\mathcal{I}\setminus h}^{(m)}$ as an univariate polynomial in the variable $z_k$, we can write
\begin{equation*}
 \boldsymbol{z}_{\mathcal{I}\setminus k}^{(m)} = \sum_{e = 0}^{m} \left(\boldsymbol{z}_{\mathcal{I}\setminus h,k}^{(m-e)} \right) z_h^e \qquad \text{and}\qquad \boldsymbol{z}_{\mathcal{I}\setminus h}^{(m)} = \sum_{e = 0}^{m} \left(\boldsymbol{z}_{\mathcal{I}\setminus h,k}^{(m-e)} \right) z_k^e.
\end{equation*} 
Hence,
\begin{equation*}
\begin{split}
\boldsymbol{z}_{\mathcal{I}\setminus k}^{(m)} - \boldsymbol{z}_{\mathcal{I}\setminus h}^{(m)} &{} = \sum_{e = 0}^{m} \left[\left(\boldsymbol{z}_{\mathcal{I}\setminus h,k}^{(m-e)} \right) (z_h^e - z_k^e)\right] = {}\\
&{} = (z_h - z_k)\sum_{e = 1}^{m} \left[\left(\boldsymbol{z}_{\mathcal{I}\setminus h,k}^{(m-e)} \right) \boldsymbol{z}_{h,k}^{(e-1)}\right] = (z_h-z_k)\boldsymbol{z}_{\mathcal{I}}^{(m-1)}.
\end{split}
\end{equation*}
\end{proof}

\begin{proposition}\label{prop:shapeI}
Let $\mathsf{a} = (a_1,\ldots,a_s)$ be a partition in $\mathcal{P}_{n+1,d}$. 
\begin{enumerate}[(i)]
\item\label{it:shapeI_i} If $s = 2$, then $I_{\mathsf{a}}^d = (a_1 z_1^d + a_2 z_2^d)$.
\item\label{it:shapeI_ii}  If $s > 2$, then $I_{\mathsf{a}}^d$ contains the polynomials $\boldsymbol{z}^{(d-w+1)}_{\mathcal{I}}$ for all multi-indices $\mathcal{I} \subset \{1,\ldots,s\}$ with $w$ elements, for all $w=3,\ldots,s$.
\end{enumerate}
\end{proposition}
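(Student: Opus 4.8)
The plan is to handle the two parts by quite different means: part (i) is a coprimality statement in the UFD $\K[z_1,z_2]$, whereas part (ii) is an induction on the size $w$ of $\mathcal{I}$ powered by Lemma \ref{lem:deletingLinear}. Throughout part (ii) I would write $J := \big(a_1z_1^d+\cdots+a_sz_s^d,\ \e_{ijk}(\mathcal{M}^{\boldsymbol{1}}_{s-1,d})\big)$ for the ideal before the colon, and for any $\mathcal{I}\subseteq\{1,\dots,s\}$ I would set $\Delta_{\mathcal{I}}:=\prod_{\{i,j\}\subseteq\mathcal{I},\,i<j}(z_i-z_j)$, the Vandermonde attached to $\mathcal{I}$.

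For part (i), the first observation is that when $s=2$ there are no $3\times 3$ minors at all, and the product $\prod_{i<j}(z_i-z_j)$ collapses to the single factor $z_1-z_2$, so that $I_{\mathsf{a}}^d=(a_1z_1^d+a_2z_2^d):\big((a_1z_1+a_2z_2)(z_1-z_2)\big)$. I would then recall that in the UFD $\K[z_1,z_2]$ the colon of a principal ideal $(p)$ by an element $g$ equals $(p/\gcd(p,g))$, so that tacking on to $g$ a factor coprime to $p$ changes nothing. It therefore suffices to check that $z_1-z_2$ is coprime to $p=a_1z_1^d+a_2z_2^d$: substituting $z_1=z_2$ yields $(a_1+a_2)z_2^d$, which is nonzero because $a_1+a_2>0$, so $z_1-z_2\nmid p$. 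Hence $\gcd\big(p,(a_1z_1+a_2z_2)(z_1-z_2)\big)=\gcd(p,a_1z_1+a_2z_2)$ and the two colon ideals coincide.

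For part (ii), I would prove the sharper statement $\boldsymbol{z}_{\mathcal{I}}^{(d-w+1)}\,\Delta_{\mathcal{I}}\in J$ for every $\mathcal{I}$ of size $w\geq 3$, by induction on $w$. The base case $w=3$ is immediate, since $\boldsymbol{z}_{\{i,j,k\}}^{(d-2)}\Delta_{\{i,j,k\}}$ is, up to sign, exactly the minor $\e_{ijk}(\mathcal{M}^{\boldsymbol{1}}_{s-1,d})\in J$ by \eqref{eq:minor1ijk}. For the inductive step with $w\geq 4$, I would fix $h\neq k$ in $\mathcal{I}$ and multiply the identity of Lemma \ref{lem:deletingLinear}, namely $(z_h-z_k)\boldsymbol{z}_{\mathcal{I}}^{(d-w+1)}=\boldsymbol{z}_{\mathcal{I}\setminus k}^{(d-w+2)}-\boldsymbol{z}_{\mathcal{I}\setminus h}^{(d-w+2)}$, by the polynomial $\Delta_{\mathcal{I}}/(z_h-z_k)$. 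Because this quotient is divisible both by $\Delta_{\mathcal{I}\setminus k}$ and by $\Delta_{\mathcal{I}\setminus h}$, each of the two resulting summands is a polynomial multiple of one of $\boldsymbol{z}_{\mathcal{I}\setminus k}^{(d-w+2)}\Delta_{\mathcal{I}\setminus k}$ or $\boldsymbol{z}_{\mathcal{I}\setminus h}^{(d-w+2)}\Delta_{\mathcal{I}\setminus h}$, both of which lie in $J$ by the inductive hypothesis applied to the $(w-1)$-subsets (note $d-w+2=d-(w-1)+1$). Therefore $\boldsymbol{z}_{\mathcal{I}}^{(d-w+1)}\Delta_{\mathcal{I}}\in J$. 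Finally, $\Delta_{\mathcal{I}}$ divides the full product $\prod_{i<j}(z_i-z_j)$, so multiplying $\boldsymbol{z}_{\mathcal{I}}^{(d-w+1)}$ by the colon element $(a_1z_1+\cdots+a_sz_s)\prod_{i<j}(z_i-z_j)$ lands in $J$, which gives exactly $\boldsymbol{z}_{\mathcal{I}}^{(d-w+1)}\in I_{\mathsf{a}}^d$.

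The step I expect to be the real obstacle is this last cancellation. A naive induction directly on the $\boldsymbol{z}_{\mathcal{I}}^{(m)}$ would only produce $(z_h-z_k)\boldsymbol{z}_{\mathcal{I}}^{(d-w+1)}\in I_{\mathsf{a}}^d$, and one is tempted to divide out $z_h-z_k$; but $I_{\mathsf{a}}^d$ is defined by a \emph{single} colon rather than a saturation, so there is no a priori reason for it to be saturated with respect to the linear form $z_h-z_k$, and the division is unjustified. The device that avoids this is precisely to carry the full Vandermonde $\Delta_{\mathcal{I}}$ through the induction and to absorb the division by $z_h-z_k$ \emph{inside} $\Delta_{\mathcal{I}}$, using that $\Delta_{\mathcal{I}}/(z_h-z_k)$ is still divisible by the smaller Vandermondes; the colon by $\prod_{i<j}(z_i-z_j)$ is then spent only once, at the very end. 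Getting the bookkeeping of which Vandermonde factors survive at each stage right is the one point that needs genuine care.
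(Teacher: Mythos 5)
Your proof is correct, and in outline it follows the same route as the paper: for (i) the observation that there are no $3\times 3$ minors when $s=2$ and that the factor $z_1-z_2$ in the colon is harmless, and for (ii) an induction on $w$ driven by Lemma \ref{lem:deletingLinear} starting from the factorization \eqref{eq:minor1ijk} of the generators. The worthwhile difference is at the inductive step of (ii). The paper's proof says only that ``it suffices to apply Lemma \ref{lem:deletingLinear}''; read literally, that produces $(z_h-z_k)\boldsymbol{z}_{\mathcal{I}}^{(d-w+1)}\in I_{\mathsf{a}}^d$, and cancelling $z_h-z_k$ would require $I_{\mathsf{a}}^d$ to be saturated with respect to that form, which is not automatic since \eqref{eq:partitionIdeal} is a single colon. (If the colon were replaced by a saturation with respect to the product, the cancellation would be legitimate, but that is not how $I_{\mathsf{a}}^d$ is defined.) Your strengthened induction hypothesis $\boldsymbol{z}_{\mathcal{I}}^{(d-w+1)}\Delta_{\mathcal{I}}\in J$, with the bookkeeping that $\Delta_{\mathcal{I}}/(z_h-z_k)$ is divisible by both $\Delta_{\mathcal{I}\setminus k}$ and $\Delta_{\mathcal{I}\setminus h}$, spends each linear factor of the colon element exactly once and closes this gap; it is the rigorous completion of the paper's sketch rather than a different argument. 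Your treatment of (i) via $\gcd$ in the UFD $\K[z_1,z_2]$ is likewise a cleaner, ideal-theoretic version of the paper's set-theoretic remark that the only common zero of $a_1z_1^d+a_2z_2^d$ and $z_1-z_2$ is the origin.
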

\begin{proof}
\emph{(\ref{it:shapeI_i})} If the coordinates of a critical point have only two distinct values, then the $3\times 3$ minors of $\mathcal{M}_{n,d}^{\boldsymbol{1}}$ automatically vanish because all $3\times 3$ submatrices of $\mathcal{M}_{n,d}^{\boldsymbol{1}}$ have at most 2 linearly independent columns. Moreover, the unique solution to $a_1 z_1^d + a_2z_2^d$ with $z_1=z_2$ is given by the pair $(0,0)$ that does not define a point. Therefore, we can avoid the saturation with respect to $z_1-z_2$.

\emph{(\ref{it:shapeI_ii})} Let us proceed iteratively: we start with the generators of $I_{\mathsf{a}}^d$
\begin{equation*}
\e_{ijk}\left(\mathcal{M}_{s-1,d}^{\boldsymbol{1}}\right) = (z_i-z_j)(z_j-z_k)(z_k-z_i) \boldsymbol{z}_{i,j,k}^{(d-2)}.
\end{equation*}
Since we compute these polynomials for every $3$-tuple $i,j,k$ and we saturate with respect to all linear forms given by the difference of two variables, the statement is true for $w=3$. For $3 < w \leqslant s$, it suffices to apply Lemma \ref{lem:deletingLinear}.
\end{proof}

\section{Closed formulas for the ML degree of special Fermat hypersurfaces}\label{closedformula}

In this section, we give two closed formulas for computing the ML degree of the Fermat hypersurfaces $F_{n,2}$ and $F_{2,d}$. In the case of $F_{n,2}$, the set $\mathcal{P}_{n+1,2}$ contains only partitions $\mathsf{a}$ of length 2 and the ideal $I_{\mathsf{a}}^2$ is very simple (Proposition \ref{prop:shapeI}\emph{(\ref{it:shapeI_i})}), while in the case $F_{2,d}$ we can successfully apply a topological criterion which works because we have only $3$ variables.

\subsection{\texorpdfstring{The ML degree of $F_{n,2}$}{The ML degree of F\_{n,2}}}
 
 We start counting the number of solutions of the ideal $I_{\mathsf{a}}^d\setminus\mathcal{H}$ for partitions $\mathsf{a} = (a_1,a_2)$ of length $2$.
 \begin{lemma}\label{lem:solAB}
Let $\mathsf{a}=(a_1,a_2)$ be a partition of $\mathcal{P}_{n+1,d}$. 
\begin{equation*}
\deg \big(I_{\mathsf{a}}^d\setminus\mathcal{H}\big) = \begin{cases}
d-1,& \text{if } a_1 = a_2 \text{ and } d \text{ odd},\\
d,& \text{otherwise}.
\end{cases}
\end{equation*}
\end{lemma}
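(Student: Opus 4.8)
The plan is to start from the explicit description of $I_{\mathsf{a}}^d$ furnished by Proposition \ref{prop:shapeI}\emph{(\ref{it:shapeI_i})}, namely $I_{\mathsf{a}}^d = (a_1 z_1^d + a_2 z_2^d) : (a_1 z_1 + a_2 z_2)$, and to reduce the degree computation to a single divisibility question in the polynomial ring $\K[z_1,z_2]$. Writing $f := a_1 z_1^d + a_2 z_2^d$ and $g := a_1 z_1 + a_2 z_2$, both $(f)$ and $(g)$ are principal, and since $\K[z_1,z_2]$ is a UFD I would invoke the standard identity $(f):(g) = \big(f/\gcd(f,g)\big)$. Because $g$ is a nonzero linear form it is irreducible, so $\gcd(f,g)$ equals $g$ when $g \mid f$ and equals $1$ otherwise. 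In the first case $I_{\mathsf{a}}^d = (f/g)$ is generated by a form of degree $d-1$, so $\deg I_{\mathsf{a}}^d = d-1$; in the second case $I_{\mathsf{a}}^d = (f)$ and $\deg I_{\mathsf{a}}^d = d$. Here I use that the degree of the zero-dimensional subscheme of $\PP^1$ cut out by a single homogeneous form equals the degree of that form, with no squarefreeness needed.

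It then remains to decide exactly when $g \mid f$, which is the crux of the lemma. I would settle this by evaluating $f$ along the line $g = 0$: substituting the root $z_1 = -\tfrac{a_2}{a_1}\,z_2$ of $g$ into $f$ gives
\[
f\Big(-\tfrac{a_2}{a_1}\,z_2,\, z_2\Big) = \Big(\tfrac{(-1)^d a_2^{d}}{a_1^{d-1}} + a_2\Big)\, z_2^d,
\]
so that $g \mid f$ precisely when the scalar factor vanishes. Clearing denominators and dividing by $a_2 \neq 0$ (legitimate since the $a_i$ are positive), this is equivalent to the numerical condition $a_1^{d-1} = (-1)^{d-1} a_2^{d-1}$.

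Finally I would run the parity analysis. If $d$ is even, the right-hand side is $-a_2^{d-1} < 0$ whereas $a_1^{d-1} > 0$, so the equality is impossible, $g \nmid f$, and the degree is $d$. If $d$ is odd, the condition reads $a_1^{d-1} = a_2^{d-1}$, which for positive integers $a_1,a_2$ holds if and only if $a_1 = a_2$; in that subcase $g \mid f$ and the degree drops to $d-1$, while for $a_1 \neq a_2$ one again gets degree $d$. Collecting the cases reproduces exactly the stated formula, the value $d-1$ occurring only when $a_1=a_2$ and $d$ is odd. I expect the only mildly delicate point to be the justification of the colon-ideal identity $(f):(g)=(f/\gcd(f,g))$ together with the ensuing degree count; everything else is an elementary substitution and a parity check, and it is worth emphasizing that the argument uses only $a_1,a_2 \neq 0$ rather than any genericity of the data vector.
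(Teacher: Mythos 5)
Your proof is correct and follows essentially the same route as the paper's: both reduce, via Proposition~\ref{prop:shapeI}\emph{(\ref{it:shapeI_i})}, to deciding whether the line $a_1z_1+a_2z_2=0$ meets the $d$ roots of $a_1z_1^d+a_2z_2^d$, and both settle this by positivity of $a_1,a_2$ together with the parity of $d$. The only (harmless) difference is presentational: the paper enumerates the $d$ roots explicitly and discards the bad one, whereas you phrase the same intersection question as the divisibility test $g\mid f$ via the identity $(f):(g)=\big(f/\gcd(f,g)\big)$, which avoids writing down radicals and does not need the roots to be distinct.
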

\begin{proof}
We start considering the solutions of the equation $a_1z_1^d + a_2z_2^d = 0$. Without loss of generality, we can assume $z_2 = 1$, so that we have $d$ solutions
\begin{equation*}
\left[ \sqrt[d]{\frac{a_2}{a_1}}\, \mathrm{e}^{\mathrm{i}\frac{2k+1}{d}\pi}: 1\right],\qquad	k = 0,\ldots,d-1.
\end{equation*}
We need to discard every solution lying on the line $a_1 z_1 + a_2z_2 = 0$. Such a point exists only in the case $a_1 = a_2$ and $\mathrm{e}^{\mathrm{i}\frac{2k+1}{d}\pi} = -1$, i.e.~$d = 2k+1$.
\end{proof}

We now apply the strategy used in the proof of Theorem \ref{th:partitioning} to count the $(a_1,a_2)$-critical points of $I_{n,d}^{\boldsymbol{1}} \setminus \mathcal{H}$ for all $(a_1,a_2) \in \mathcal{P}_{n+1,d}$.
\begin{proposition}\label{prop:totAB}
The total number of $\mathsf{a}$-critical points of the ideal $I^{\boldsymbol{1}}_{n,d}\setminus \mathcal{H}$ as $\mathsf{a}$ varies over all pairs $(a_1,a_2) \in \mathcal{P}_{n+1,d}$ is
\begin{equation}
d(2^n-1) - \frac{1}{2}\binom{n+1}{\frac{n+1}{2}}\text{ if $d$ and $n$ are odd},\qquad d(2^n-1) \text{ otherwise}. 
\end{equation}
\end{proposition}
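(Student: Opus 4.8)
The plan is to apply Theorem \ref{th:partitioning} termwise to every length-two partition and then collapse the resulting sum of binomial coefficients using the symmetry $\binom{n+1}{a_1} = \binom{n+1}{n+1-a_1}$ together with $\sum_{k=0}^{n+1}\binom{n+1}{k} = 2^{n+1}$. The whole computation is bookkeeping once the three relevant quantities $c_{\mathsf{a}}$, $o_{\mathsf{a}}$ and $\deg I_{\mathsf{a}}^d$ are pinned down for a pair $\mathsf{a} = (a_1,a_2)$.

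First I would specialize $c_{\mathsf{a}}$ and $o_{\mathsf{a}}$ to $\mathsf{a} = (a_1,a_2)$ with $a_1 + a_2 = n+1$. Since $s=2$, the product defining $c_{\mathsf{a}}$ collapses to the single factor $\binom{n+1}{a_1}$. The exponent vector $\alpha_{\mathsf{a}}$ is $(1,1)$ when $a_1 > a_2$ and $(2)$ when $a_1 = a_2$, so $o_{\mathsf{a}} = 1$ in the unbalanced case and $o_{\mathsf{a}} = 2$ in the balanced one. Feeding this, together with $\deg I_{\mathsf{a}}^d$ from Lemma \ref{lem:solAB}, into Theorem \ref{th:partitioning}, the number of $(a_1,a_2)$-critical points is $d\binom{n+1}{a_1}$ whenever $a_1 > a_2$, whereas for the (at most one) balanced partition $a_1 = a_2 = \frac{n+1}{2}$ it equals $\frac{d}{2}\binom{n+1}{(n+1)/2}$ if $d$ is even and $\frac{d-1}{2}\binom{n+1}{(n+1)/2}$ if $d$ is odd.

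Next I would sum these contributions, splitting on the parity of $n$, since a balanced partition exists only when $n+1$ is even, i.e. $n$ odd. For $n$ even every partition is unbalanced with $a_1$ ranging over $\frac{n+2}{2}, \ldots, n$; the symmetry of the binomial coefficients gives $\sum \binom{n+1}{a_1} = 2^n - 1$, where the $-1$ discards the excluded term $a_1 = n+1$ (that is $a_2 = 0$), so the total is $d(2^n-1)$. For $n$ odd I would handle the unbalanced partitions ($a_1$ from $\frac{n+1}{2}+1$ to $n$) and the single balanced one separately: the unbalanced binomial sum equals $2^n - \frac{1}{2}\binom{n+1}{(n+1)/2} - 1$, and adding the balanced contribution makes the $\frac{1}{2}\binom{n+1}{(n+1)/2}$ terms cancel exactly when $d$ is even, leaving $d(2^n-1)$, while the mismatch between $\frac{d-1}{2}$ and $\frac{d}{2}$ when $d$ is odd leaves behind the extra $-\frac{1}{2}\binom{n+1}{(n+1)/2}$.

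There is no deep obstacle here; the only delicate point is the balanced partition $a_1 = a_2$, where two corrections interact simultaneously: the order $o_{\mathsf{a}} = 2$ and the drop from $d$ to $d-1$ in $\deg I_{\mathsf{a}}^d$. Combining these correctly — and checking that the $\frac{1}{2}\binom{n+1}{(n+1)/2}$ cancellation happens precisely in the $d$ even case but fails by exactly that amount in the $d$ odd case — is the heart of the statement and the only place where a factor-of-two or sign slip would alter the final formula.
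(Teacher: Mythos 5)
Your proposal is correct and follows essentially the same route as the paper: specialize $c_{\mathsf{a}}$, $o_{\mathsf{a}}$ and $\deg I_{\mathsf{a}}^d$ (via Lemma \ref{lem:solAB}) for length-two partitions, feed them into Theorem \ref{th:partitioning}, and collapse the binomial sum by symmetry, with the balanced partition $a_1=a_2$ handled separately exactly as the paper does. The only cosmetic difference is that you track the $\tfrac{d}{2}$ versus $\tfrac{d-1}{2}$ mismatch directly, while the paper phrases the odd-$d$ correction as subtracting $c_{\mathsf{a}}/o_{\mathsf{a}}$ for the one lost solution; these are the same computation.
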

\begin{proof}
If $n+1$ is odd, for each partition $(a_1,a_2) \in \mathcal{P}_{n+1,d}$ of length 2, we have $\alpha_{(a_1,a_2)}! = 1$ (as $a_1 > a_2$) and $\deg \big(I_{(a_1,a_2)}^d\setminus\mathcal{H}\big) = d$ (Lemma \ref{lem:solAB}), so that the total number of $(a_1,a_2)$-solutions of $I_{n,d}^{\boldsymbol{1}} \setminus \mathcal{H}$ is
\begin{equation*}
\begin{split}
\sum_{\mathclap{\cramped{(a_1,a_2) \in \mathcal{P}_{n+1,d}}}}\, c_{(a_1,a_2)}\frac{\deg \big(I^{d}_{(a_1,a_2)}\setminus\mathcal{H}\big)}{\alpha_{(a_1,a_2)}!} = \quad\sum_{\mathclap{\cramped{\begin{subarray}{c} a_1+a_2 = n+1\\a_1 > a_2 > 0\end{subarray}}}}\ \: d\binom{n+1}{a_1} =  d \sum_{i=1}^{n/2} \binom{n+1}{i} = {}&\\
{} = d\frac{1}{2} \sum_{i=1}^{n} \binom{n+1}{i} = d\frac{1}{2} \left(\sum_{i=0}^{n+1} \binom{n+1}{i} - 2\right) ={}& d(2^n-1).
\end{split}
\end{equation*}

If $n+1$ is even, there is also the partition $(\frac{n+1}{2},\frac{n+1}{2})$ for which $\alpha_{(\frac{n+1}{2},\frac{n+1}{2})}! = 2$. Assume $d$ even, so that $\deg \big(I_{(\frac{n+1}{2},\frac{n+1}{2})}^d\setminus\mathcal{H}\big) = d$. The total number of $(a_1,a_2)$-critical points is
\begin{equation*}
\begin{split}
\sum_{\mathclap{\cramped{(a_1,a_2) \in \mathcal{P}_{n+1,d}}}}\, c_{(a_1,a_2)}\frac{\deg \big(I^{d}_{(a_1,a_2)}\setminus\mathcal{H}\big)}{\alpha_{(a_1,a_2)}!} &{}= \quad\sum_{\mathclap{\cramped{\begin{subarray}{c} a_1+a_2 = n+1\\a_1 > a_2 > 0\end{subarray}}}}\ \: d\binom{n+1}{a_1} + \frac{d}{2}\binom{n+1}{\tfrac{n+1}{2}} ={}\\  
&{}  = d \sum_{i=1}^{\frac{n-1}{2}} \binom{n+1}{i} + \frac{d}{2}\binom{n+1}{\tfrac{n+1}{2}} =  \frac{d}{2} \sum_{i=1}^{n} \binom{n+1}{i}  = {}\\
&{} = \frac{d}{2} \left(\sum_{i=0}^{n} \binom{n+1}{i} - 2\right) = d(2^n-1).
\end{split}
\end{equation*}
Finally, if $d$ is odd, we have one less solution of $I_{\big(\frac{n+1}{2},\frac{n+1}{2}\big)}^d \setminus\mathcal{H}$ so that the total number of $(a_1,a_2)$-critical points is decreased by $c_{\big(\frac{n+1}{2},\frac{n+1}{2}\big)}/\alpha_{\big(\frac{n+1}{2},\frac{n+1}{2}\big)}! = \frac{1}{2}\binom{n+1}{\tfrac{n+1}{2}}$.
\end{proof}

\begin{corollary}\label{prop:MLdegD2}
The ML degree of the Fermat hypersurface $F_{n,2}$ is $2^{n+1} - 2$.
\end{corollary}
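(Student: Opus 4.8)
The plan is to specialize the general formula of Proposition~\ref{prop:totAB} to the case $d = 2$. First I would observe that when $d = 2$, Lemma~\ref{lem:diffValues} tells us the coordinates of any critical point can assume at most $\min\{2, n+1\} = 2$ distinct values. Since the point $[1:\ldots:1]$ does not lie on $F_{n,2}$, there are at least two distinct coordinates, so in fact \emph{every} critical point has exactly two distinct values. This means the set of partitions $\mathcal{P}_{n+1,2}$ consists only of partitions of length $2$, and therefore the full ML degree sum in Theorem~\ref{th:partitioning} reduces entirely to the length-$2$ contribution already computed in Proposition~\ref{prop:totAB}.

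Next I would simply invoke Proposition~\ref{prop:totAB} with $d = 2$. Since $d = 2$ is even, we are never in the exceptional case (which required $d$ \emph{and} $n$ both odd), regardless of the parity of $n$. Hence the total number of critical points is
\[
\MLdeg F_{n,2} = d(2^n - 1)\big|_{d=2} = 2(2^n - 1) = 2^{n+1} - 2,
\]
which is exactly the claimed formula.

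There is essentially no hard step here: the corollary is a direct substitution into an already-established count. The only point requiring care is verifying that for $d = 2$ the partition set $\mathcal{P}_{n+1,2}$ contains \emph{no} partitions of length greater than $2$, so that Proposition~\ref{prop:totAB} accounts for all critical points and no additional terms from longer partitions are missed. This follows immediately from the length bound $s \leqslant \min\{d, n+1\} = 2$ built into the definition of $\mathcal{P}_{n+1,d}$, together with Lemma~\ref{lem:diffValues}. Once this is noted, the result is immediate.
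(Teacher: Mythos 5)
Your proposal is correct and follows exactly the paper's argument: for $d=2$ every partition in $\mathcal{P}_{n+1,2}$ has length $2$, so the count of $(a_1,a_2)$-critical points from Proposition~\ref{prop:totAB} is the full ML degree, and since $d=2$ is even the answer is $2(2^n-1)=2^{n+1}-2$. No discrepancies with the paper's proof.
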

\begin{proof}
If $d=2$, all the partitions of $\mathcal{P}_{n+1,2}$ have length 2, so that the number of $(a_1,a_2)$-critical points determined in Proposition \ref{prop:totAB} is exactly the ML degree of $F_{n,d}$.
\end{proof}

Notice that this corollary proves that the Fermat hypersurface of degree $2$ has the same ML degree of the generic hypersurface of degree $2$ (cf.~\cite[Theorem 1.10, Example 1.11]{HuhSturmfels}). This is the expected result, as the quadric hypersurfaces are classified by the rank of the associated symmetric matrix and the quadric Fermat hypersurface has maximal rank as the generic quadric hypersurface. In general, this property does not hold. For instance, in the case discussed in Example \ref{ex:referee}, the Fermat surface $F_{3,3}$ has ML degree 30, while the ML degree of the generic cubic surface is $39$. Notice that in order to obtain a cubic surface with ML degree equal to 39 it suffices to randomly choose the coefficients of the monomials in the equation of the Fermat surface.

\subsection{\texorpdfstring{The ML degree of $F_{2,d}$}{The ML degree of F\_{2,d}}}

Applying Lemma \ref{lem:solAB} and Theorem \ref{th:partitioning} to the case $n=2$, we obtain
\begin{equation*}
\MLdeg F_{2,d} = c_{(2,1)} \frac{\deg \big(I_{(2,1)}^d\setminus\mathcal{H}\big)}{\alpha_{(2,1)}!} + c_{(1,1,1)} \frac{\deg \big(I_{(1,1,1)}^d\setminus\mathcal{H}\big)}{\alpha_{(1,1,1)}!} = 3d + \deg \big(I_{(1,1,1)}^d\setminus\mathcal{H}\big).
\end{equation*}
Furthermore, since in the case of partitions of length 3 there is a unique $3\times 3$ minor, the solutions of $I_{(1,1,1)}^d\setminus\mathcal{H}$ are contained in the intersection of $\boldsymbol{z}_{1,2,3}^{(d-2)} = 0$ with $z_1^d + z_2^d + z_3^d=0$. By Bezout's theorem, we know that there are $d(d-2)$ solutions (counted with multiplicity). This gives the upper bound of the ML degree $3d + d(d-2) = d^2 + d$.

In this case, instead of trying to determine explicitly the solutions of $I_{(1,1,1)}^d \setminus \mathcal{H}$, we resort to a special case of a theorem that correlates the ML degree of a variety with its signed topological Euler characteristic (see \cite{CHKS,Huh,HuhSturmfels}). To make the paper as self-contained as possible we report the theorem below. Recall that $\mathcal{H}$ denotes the distinguished arrangement of hyperplanes.

\begin{theorem}[{\cite[Theorem 1.1]{HuhSturmfels}}]\label{th:teoSturmfels}
Let $X$ be a smooth curve of degree $d$ in $\PP^2$, and $\a=\#(X\cap \HH)$ the number of its points on the distinguished arrangement. Then the ML degree of $X$ equals $d^2-3d+\a$.
\end{theorem}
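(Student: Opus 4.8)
The plan is to deduce this as the special case of curves of the general identity between the ML degree of a smooth variety and the signed topological Euler characteristic of its very affine part, and then to evaluate that Euler characteristic by hand.

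First I would pass to the open curve $X^\circ := X \setminus \mathcal{H}$, the complement in $X$ of the distinguished arrangement. Since $X$ is smooth of degree $d$ while the four lines comprising $\mathcal{H}$ each have degree one, $X$ shares no component with $\mathcal{H}$, so $X \cap \mathcal{H}$ is the finite reduced set of $\a$ points and $X^\circ$ is a smooth very affine curve (it is closed in the very affine variety $\PP^2 \setminus \mathcal{H}$). The key input, which I would cite from \cite{CHKS,Huh,HuhSturmfels}, is that for such an $X^\circ$ one has $\MLdeg X = (-1)^{\dim X^\circ}\chi(X^\circ) = -\chi(X^\circ)$, the sign being $-1$ because $\dim X^\circ = 1$.

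Next I would compute $\chi(X^\circ)$ purely topologically. Regarding $X$ as a compact Riemann surface, the genus-degree formula gives genus $g = \binom{d-1}{2}$ and hence $\chi(X) = 2 - 2g = 2 - (d-1)(d-2) = -d^2 + 3d$. Removing the $\a$ points of $X \cap \mathcal{H}$ lowers the Euler characteristic by $\a$: using the additivity of $\chi$ over the decomposition $X = X^\circ \sqcup (X\cap \mathcal{H})$ and the fact that a finite reduced set of $\a$ points has $\chi = \a$, I obtain $\chi(X^\circ) = \chi(X) - \a = -d^2 + 3d - \a$. Combining the two steps yields $\MLdeg X = -\chi(X^\circ) = d^2 - 3d + \a$, as desired.

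The entire difficulty is concentrated in the first step: the equality $\MLdeg X = (-1)^{\dim X^\circ}\chi(X^\circ)$ is a deep theorem whose proof rests on interpreting a general log-likelihood function as a Morse function on $X^\circ$ and controlling its critical points via arguments involving the logarithmic cotangent bundle and characteristic cycles; it is not at all elementary. Once it is granted, the remaining computation is routine. The one point I would be careful about is that $\a$ must be the number of \emph{distinct} intersection points rather than a count with multiplicity, which is precisely what the Euler characteristic of the reduced set $X \cap \mathcal{H}$ records; the formula is therefore insensitive to any tangency of $X$ with the lines of $\mathcal{H}$.
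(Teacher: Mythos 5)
This statement is quoted by the paper from \cite[Theorem 1.1]{HuhSturmfels} and is not proved in the paper at all, so there is no internal proof to compare against; what you have written is a correct derivation of the cited result, and it is essentially the argument used in the source. Your reduction is sound: $X^\circ = X\setminus\HH$ is a smooth closed subvariety of the very affine variety $\PP^2\setminus\HH$ (the four linear forms embed the complement into a torus), so Huh's theorem gives $\MLdeg X = (-1)^{\dim X^\circ}\chi(X^\circ) = -\chi(X^\circ)$ with no genericity hypothesis on the position of $X$ relative to the four lines --- this is the point where citing \cite{Huh} rather than only \cite{CHKS} matters, since the latter's version assumes general position, which can fail here (and does fail for Fermat curves with $d\equiv 1 \bmod 3$, as the paper's own computation of tangencies with $x_0+x_1+x_2=0$ shows). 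The genus--degree computation $\chi(X) = 3d - d^2$, the additivity $\chi(X^\circ) = \chi(X) - \a$ for the reduced finite set $X\cap\HH$, and your remark that $\a$ counts distinct points rather than intersection multiplicities are all correct and are exactly what the paper's subsequent application requires. You correctly flag that the entire depth is concentrated in the Euler characteristic identity, which you import rather than prove; that is the same logical status the statement has in the paper itself.
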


As the Fermat curve is smooth, we need to calculate the number $\a_d=\#(F_{2,d}\cap \HH)$ for $d > 1$. 

\begin{proposition}
The ML degree of the Fermat curve $F_{2,d}$ is
\begin{equation*}
\MLdeg F_{2,d}=
\begin{cases}
d^2+d, &\text{if } d\equiv 0,2 \bmod 6, \\
d^2+d-3,&\text{if }  d\equiv 3,5 \bmod 6, \\
d^2+d-2,&\text{if }  d\equiv 4 \bmod 6, \\
d^2+d-5,&\text{if }  d\equiv 1 \bmod 6,
\end{cases}
\end{equation*}
\end{proposition}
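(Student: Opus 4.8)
The plan is to reduce everything to Theorem \ref{th:teoSturmfels}. Since the Fermat curve $F_{2,d}$ is smooth of degree $d$ in $\PP^2$, that theorem gives $\MLdeg F_{2,d}=d^2-3d+\a_d$, where $\a_d=\#(F_{2,d}\cap\HH)$ counts the \emph{distinct} points of the curve lying on the four lines $x_0=0$, $x_1=0$, $x_2=0$ and $x_0+x_1+x_2=0$ of the distinguished arrangement. Thus the statement is equivalent to the point count $\a_d=4d,\,4d-3,\,4d-2,\,4d-5$ according to $d\bmod 6$, and I would obtain it by inclusion--exclusion over the four lines.

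First I would handle the three coordinate lines. On $\{x_i=0\}$ the Fermat equation restricts to the binary form $x_j^d+x_k^d$, whose $d$ roots in $\PP^1$ are distinct (the $d$-th roots of $-1$), so each coordinate line meets $F_{2,d}$ transversally in exactly $d$ points. The pairwise intersections of these lines are the coordinate points $[1:0:0],[0:1:0],[0:0:1]$, none of which lies on $F_{2,d}$; hence the three coordinate lines contribute exactly $3d$ distinct points, with no coincidences among them.

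The crux is the line $L\colon x_0+x_1+x_2=0$. Substituting $x_2=-(x_0+x_1)$ turns the Fermat equation into the binary form $g(x_0,x_1)=x_0^d+x_1^d+(-1)^d(x_0+x_1)^d$, and I want the number $N_L$ of \emph{distinct} roots of $g$, which by B\'ezout equals $d$ minus the total tangency defect $\sum_P(\mathrm{mult}_P-1)$. The multiple roots of $g$ are the common zeros of $\partial g/\partial x_0$ and $\partial g/\partial x_1$; setting $u=x_0/x_1$ and $v=-u-1$, one checks that such a zero forces both $u$ and $v$ to be $(d-1)$-th roots of unity with $u+v=-1$. The key elementary lemma is that the \emph{only} pairs of roots of unity (of any order) summing to $-1$ are the two primitive cube roots $\omega,\omega^2$; these are $(d-1)$-th roots of unity precisely when $3\mid d-1$, i.e.\ $d\equiv1\pmod 3$. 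A short computation of $g''$ at $\omega$ (using $\omega+1=-\omega^2$) shows each is a genuine \emph{double} root, so the tangency defect is exactly $2$ when $d\equiv1\pmod 3$ and $0$ otherwise; consequently $N_L=d-2$ if $d\equiv1\pmod 3$ and $N_L=d$ if $d\equiv0,2\pmod 3$. I expect this multiplicity analysis --- ruling out every other tangent point and pinning the multiplicity to exactly two --- to be the main obstacle.

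Finally I would account for the overlaps between $L$ and the coordinate lines. The three points $L\cap\{x_i=0\}$ are $[0:1:-1],[1:0:-1],[1:-1:0]$, and each lies on $F_{2,d}$ iff $1+(-1)^d=0$, i.e.\ iff $d$ is odd; in that case they are simple (transversal) intersection points, since the only multiple roots of $g$ are $\omega,\omega^2$. Inclusion--exclusion then gives $\a_d=3d+N_L$ for $d$ even and $\a_d=3d+N_L-3$ for $d$ odd. Splitting into the residues modulo $6$ (which jointly record the parity of $d$ and whether $d\equiv1\pmod 3$) and substituting into $\MLdeg F_{2,d}=d^2-3d+\a_d$ yields the four stated values, completing the proof.
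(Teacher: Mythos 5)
Your proposal is correct and follows essentially the same route as the paper: apply Theorem \ref{th:teoSturmfels}, count the $3d$ transversal points on the coordinate lines, analyze the multiple roots on $x_0+x_1+x_2=0$ by reducing to the condition that $x_0$ and $-(x_0+1)$ are both $(d-1)$-th roots of unity (forcing the primitive cube roots, hence $d\equiv 1\bmod 3$, with multiplicity exactly two by the second-derivative check), and correct for the three overlap points $[1:-1:0]$ etc.\ when $d$ is odd. Your ``roots of unity summing to $-1$'' lemma is just a rephrasing of the paper's observation that $\vert\xi\vert=\vert\xi+1\vert=1$ pins down $\omega_3$ and $\overline{\omega_3}$, so there is no substantive difference.
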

\begin{proof}
Let us examine the intersection of $F_{n,d}$ with $x_0=0$, $x_1=0$, $x_2=0$ and $x_0+x_1+x_0=0$. Notice that the points $[1:0:0]$, $[0:1:0]$ and $[0:0:1]$ (lying on two different lines $x_i=x_j=0$) are not points of $F_{2,d}$. By Bezout's theorem, the intersection $F_{n,d} \cap \{x_0 = 0\}$ consists of $d$ points counted with multiplicity. In this case, all the points are simple as they correspond to the solutions of the equation $x_1^d+x_2^d=0$. By symmetry, we conclude that $\#(F_{2,d}\cap\{x_i=0\})=d$ for all $i=0,1,2$.

Now consider the line $\{x_0+x_1+x_2=0\}$ and assume $x_2 \neq 0$. We have
\small
\begin{equation*}
\begin{cases}
x_0^d+x_1^d+1=0\\
x_0+x_1+1=0
\end{cases}\ \quad\quad
\begin{cases}
x_0^d+(-1)^d(x_0+1)^d+1=0\\
x_1=-1-x_0
\end{cases}.
\end{equation*}
\normalsize
As multiple roots of the intersection $F_{2,d} \cap \{x_0 + x_1 + x_2 = 0\}$ corresponds to multiple roots of the polynomial $f_d(x_0):=x_0^d+(-1)^d(x_0+1)^d+1\in \CC[x_0]$, we try to determine the common factors of $f_d(x_0)$ and its derivative $f'_d(x_0) = d \big( x_0^{d-1} + (-1)^d (x_0+1)^{d-1}\big)$: 
\small
\begin{equation*}
\begin{cases}
x_0^d+(-1)^d(x_0+1)^d+1=0\\
x_0^{d-1} + (-1)^d (x_0+1)^{d-1} = 0
\end{cases}\
\begin{cases}
x_0^d+(x_0+1)(- x_0^{d-1})+1=0\\
(-1)^d (x_0+1)^{d-1} = - x_0^{d-1}
\end{cases}\
\begin{cases}
x_0^{d-1} = 1\\
(x_0+1)^{d-1} = (-1)^{d-1}
\end{cases}.
\end{equation*}
\normalsize
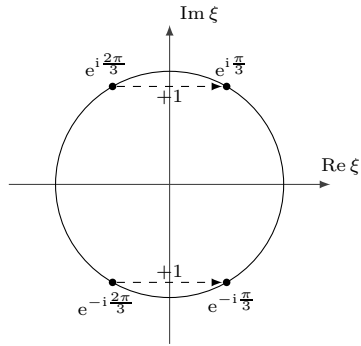
\begin{figure}[!ht]
\begin{center}
\begin{tikzpicture}[>=latex,scale=0.75]
\node (a) at (-3,0) [] {};
\node (b) at (3,0) [] {};
\node at (3,0) [above] {\tiny $\text{Re}\, \xi$};
\node (c) at (0,-3) [] {};
\node (d) at (0,3) [] {};
\node at (0,3) [right] {\tiny $\text{Im}\, \xi$};

\draw [->,thin,black!75] (a) -- (b);
\draw [->,thin,black!75] (c) -- (d);

\draw [] (0,0) circle (2);

\node (w1) at (-1,1.732) [fill=black,circle,inner sep=1pt] {};
\node (w11) at (1,1.732) [fill=black,circle,inner sep=1pt] {};

\node at (-1.1,1.732) [above] {\tiny $\mathrm{e}^{\mathrm{i}\frac{2\pi}{3}}$};
\node at (1.1,1.732) [above] {\tiny $\mathrm{e}^{\mathrm{i}\frac{\pi}{3}}$};

\node at (-1.1,-1.732) [below] {\tiny $\mathrm{e}^{-\mathrm{i}\frac{2\pi}{3}}$};
\node at (1.1,-1.732) [below] {\tiny $\mathrm{e}^{-\mathrm{i}\frac{\pi}{3}}$};

\node (w2) at (-1,-1.732) [fill=black,circle,inner sep=1pt] {};
\node (w21) at (1,-1.732) [fill=black,circle,inner sep=1pt] {};

\draw [dashed,->] (w1) -- node[below,inner sep=1pt]{\tiny $+1$} (w11);
\draw [dashed,->] (w2) -- node[above,inner sep=1pt]{\tiny $+1$} (w21);
\end{tikzpicture}
\caption{\label{fig:plus1} The complex numbers $\xi$ such that $\vert\xi\vert = \vert\xi+1\vert = 1$.}
\end{center}
\end{figure}
Now we observe that if $x_0\in \CC$ is a solution to the system, then $|x_0|=|x_0+1|=1$, where $\vert\xi\vert$ is the usual Euclidean norm of a complex number $\xi$. The only complex numbers satisfying such relations are the third root of unity $\omega_3=\e^{\mathrm{i}\frac{2\pi}{3}}$ and $\overline{\omega_{3}}$ (see Figure \ref{fig:plus1}). Thus, we can check directly whether $\omega_3$ and $\overline{\omega_3}$ are solutions. Substituting $x_0 = \omega_3$, we obtain
\begin{equation*}
\begin{cases}
\omega_3^{d-1}=1\\
(\omega_3+1)^{d-1}=(-1)^{d-1}
\end{cases}
\quad {\text{\scriptsize $(\omega_3+1=-\overline{\omega_{3}})$}}\qquad
\begin{cases}
\omega_3^{d-1}=1\\
\overline{\omega_{3}}^{d-1}=1
\end{cases}.
\end{equation*}
These equations are satisfied if, and only if, $3$ divides $d-1$, i.e.~$d\equiv 1\bmod 3$. In this case the multiple roots are exactly two, each with multiplicity two, since the second derivative $f_d''(x_0)$ does not vanish at $\omega_3$ and $\overline{\omega_3}$. Since the three coordinates of the multiple roots are the three solutions to the equation $\xi^3 = 1$, we notice that changing the affine chart does not lead to any other multiple roots. It follows that
\begin{equation*}
\#(F_{2,d}\cap \{x_0+x_1+x_2=0\})=
\begin{cases}
d, & \text{if } d\not\equiv 1 \bmod 3, \\
d-2, & \text{if } d\equiv 1 \bmod 3. 
\end{cases}
\end{equation*}
To conclude the proof, we need to count how many simple points of $F_{2,d}\cap \{x_0+x_1+x_2=0\}$ lie also on a line $x_i = 0$. The points lying on $F_{2,d}\cap \{x_0+x_1+x_2=0\} \cap \{x_0=0\}$ satisfy the system
\begin{equation*}
\begin{cases}
x_0 = 0\\
x_1 = -x_2\\
\left((-1)^d + 1\right)x_2^d = 0
\end{cases}.
\end{equation*}
If $d$ is even, there are no solutions,  while if $d$ is odd, we get the point $[0:1:-1]$. We conclude that
\begin{equation*}
\begin{split}
\# (F_{2,d} \cap \mathcal{H}) = {}& 3 \#\left( F_{2,d} \cap \{x_0=0\}\right) \\
&\qquad{} + \#\left( F_{2,d} \cap \{x_0+x_1+x_2=0\}\right) \\
&\qquad\qquad{} - 3\#\left( F_{2,d} \cap \{x_0+x_1+x_2=0\} \cap \{x_0 = 0\} \right) = {}\\
{} = {}& 
\left\{\begin{array}{ccccccl}
3d &+& d, && &&\text{if } d\not\equiv 1 \bmod 3 \text{ and }  d \text{ even}, \\                     
3d &+&d &-&3,  &&\text{if } d\not\equiv 1 \bmod 3 \text{ and }  d \text{ odd}, \\ 
3d &+& d-2,&&  &&\text{if } d\equiv 1 \bmod 3 \text{ and }  d \text{ even}, \\ 
3d &+& d-2&-&3,  &&\text{if } d\equiv 1 \bmod 3 \text{ and }  d \text{ odd}, \\ 
\end{array}\right.\\
{}={}& \left\{\begin{array}{lcl}
4d, && \text{if } d\equiv 0,2 \bmod 6,\\
4d-3, && \text{if } d\equiv 3,5 \bmod 6,\\
4d-2, &&\text{if }  d\equiv 4 \bmod 6,\\
4d-5, &&\text{if }  d\equiv 1 \bmod 6.
\end{array}\right.
\end{split}
\end{equation*}
The formula for the ML degree follows by Theorem \ref{th:teoSturmfels}.
\end{proof}

\section{Computational results}

This last section is intended to report the computational results we obtained. We provide a comparison among the possible procedures to get the ML degree of Fermat hypersurfaces that we explored and developed throughout the paper. We discussed three main methods: determining the multidegree of the likelihood correspondence, determining the degree of the projection $\pi_{\yy}:\mathcal{L}_{n,d} \rightarrow \PP^n_{\yy}$ considering the fiber of a generic point of $\PP^n_{\yy}$ and partitioning the solutions of $I_{n,d}^{\boldsymbol{1}}\setminus\mathcal{H}$ according to the number of distinct coordinates (Theorem \ref{th:partitioning}). To all these methods, Lemma \ref{prop:davide} applies, so that we can choose to compute the saturation of an ideal with respect to a linear form or to compute the degree of two ideals, the one not saturated and the one intersected with the linear form. Hence, we compare the following six strategies (see Table \ref{tab:2345}).
\begin{description}
\item[Strategy 1 (simple \lq\lq multidegree\rq\rq)] determine the ML degree of $F_{n,d}$ as the leading coefficient of the multidegree polynomial \eqref{eq:bidegree} of the likelihood correspondence $\mathcal{L}_{n,d}$.
\item[Strategy 2 (\lq\lq multidegree\rq\rq\ by difference)] apply Lemma \ref{prop:davide} and Proposition \ref{prop:daniele_smooth} and determine the multidegree polynomial of $\mathcal{L}_{n,d}$ as the difference between the multidegree polynomial of the variety defined by $I_{n,d}^{\yy}$ and the multidegree polynomial of the variety defined by $I_{n,d}^{\yy} + (x_0 + \cdots + x_n)$.
\item[Strategy 3 (simple \lq\lq random data\rq\rq)] determine the ML degree of $F_{n,d}$ as the degree of the projection $\pi_{\yy}:\mathcal{L}_{n,d} \rightarrow \PP^n_{\yy}$, i.e.~computing the degree of the ideal $I_{n,d}^{\uu}\setminus \mathcal{H}$ for a randomly chosen $\uu \in \PP^n_{\yy}$.
\item[Strategy 4 (\lq\lq random data\rq\rq\ by difference)] apply Lemma \ref{prop:davide} and Proposition \ref{prop:daniele_smooth} and compute $\deg \big(I_{n,d}^{\uu}\setminus \mathcal{H}\big)$ as $\deg I_{n,d}^{\uu} - \deg \big(I_{n,d}^{\uu} + (x_0+\ldots+x_n)\big)$ for a randomly chosen $\uu \in \PP^n_{\yy}$.
\item[Strategy 5 (simple \lq\lq partitioning\rq\rq)] apply Theorem \ref{prop:flatness} and Theorem \ref{th:partitioning} and determine $\MLdeg F_{n,d}$ from the degree of the ideals $I_{\mathsf{a}}^d  \setminus\mathcal{H}$ defining the $\mathsf{a}$-critical points.
\item[Strategy 6 (\lq\lq partitioning\rq\rq\ by difference)] apply Lemma \ref{prop:davide}, Proposition \ref{prop:daniele_smooth}, Theorem \ref{prop:flatness} and Theorem \ref{th:partitioning} and compute $\deg\big(I_{\mathsf{a}}^d \setminus \mathcal{H}\big)$ as $\deg I_{\mathsf{a}}^d - \deg\big(I_{\mathsf{a}}^d \cap \mathcal{H}\big)$.
\end{description}

\begin{table}[H]
\begin{center}
\caption{\small\label{tab:2345} An experimental comparison of the different strategies. All the algorithms for the different strategies  are implemented with the software \emph{Macaulay2} \cite{M2}. The code is developed in the package \texttt{MLdegreeFermatHypersurface.m2} available at \href{http://www.paololella.it/EN/Publications.html}{\texttt{http://www.paololella.it/EN/Publications.html}}. A complete report of the tests can be found at the same address. The algorithms have been run on a Intel(R) Xeon(R) 2.60 GHz processor. We considered the Fermat hypersurfaces in the ranges $2 \leqslant n \leqslant 9,\ 2 \leqslant d \leqslant 9$ with a limit for completion of $10^4$ seconds of cpu-time. For more efficient computations, polynomial rings with coefficients in a finite field have been exploited; the correctness of the results has been checked by repeating the same process using fields with different characteristic.}
\end{center}

\begin{center}
\subfloat[][\textbf{Simple \lq\lq multidegree\rq\rq.}]
{\label{tab:cf1}
\begin{tikzpicture}[scale=0.4]

\node at (-.5,0) [] {};
\node at (.5,0) [] {};
\filldraw[red!13!yellow] (0,0) rectangle (1,-1);
\filldraw[red!13!yellow] (1,0) rectangle (2,-1);
\filldraw[red!13!yellow] (2,0) rectangle (3,-1);
\filldraw[red!13!yellow] (3,0) rectangle (4,-1);
\filldraw[red!13!yellow] (4,0) rectangle (5,-1);
\filldraw[red!14!yellow] (5,0) rectangle (6,-1);
\filldraw[red!13!yellow] (6,0) rectangle (7,-1);
\filldraw[red!14!yellow] (7,0) rectangle (8,-1);


\filldraw[red!13!yellow] (0,-1) rectangle (1,-2);
\filldraw[red!17!yellow] (1,-1) rectangle (2,-2);
\filldraw[red!19!yellow] (2,-1) rectangle (3,-2);
\filldraw[red!33!yellow] (3,-1) rectangle (4,-2);
\filldraw[red!24!yellow] (4,-1) rectangle (5,-2);
\filldraw[red!50!yellow] (5,-1) rectangle (6,-2);
\filldraw[red!38!yellow] (6,-1) rectangle (7,-2);
\filldraw[red!62!yellow] (7,-1) rectangle (8,-2);


\filldraw[red!14!yellow] (0,-2) rectangle (1,-3);
\filldraw[red!57!yellow] (1,-2) rectangle (2,-3);
\filldraw[red!76!yellow] (2,-2) rectangle (3,-3);
\filldraw[red!91!yellow] (3,-2) rectangle (4,-3);
\filldraw[red!88!yellow] (4,-2) rectangle (5,-3);


\filldraw[red!24!yellow] (0,-3) rectangle (1,-4);


\filldraw[red!50!yellow] (0,-4) rectangle (1,-5);


\filldraw[red!74!yellow] (0,-5) rectangle (1,-6);


\node at (-1,1) [] {\footnotesize $F_{n,d}$};
\node at (-1.5,-4) [] {\footnotesize $n$};
\node at (4,1.5) [] {\footnotesize $d$};

\node at (-0.5,-0.5) [] {\footnotesize $2$};
\node at (-0.5,-1.5) [] {\footnotesize $3$};
\node at (-0.5,-2.5) [] {\footnotesize $4$};
\node at (-0.5,-3.5) [] {\footnotesize $5$};
\node at (-0.5,-4.5) [] {\footnotesize $6$};
\node at (-0.5,-5.5) [] {\footnotesize $7$};
\node at (-0.5,-6.5) [] {\footnotesize $8$};
\node at (-0.5,-7.5) [] {\footnotesize $9$};

\node at (0.5,0.5) [] {\footnotesize $2$};
\node at (1.5,0.5) [] {\footnotesize $3$};
\node at (2.5,0.5) [] {\footnotesize $4$};
\node at (3.5,0.5) [] {\footnotesize $5$};
\node at (4.5,0.5) [] {\footnotesize $6$};
\node at (5.5,0.5) [] {\footnotesize $7$};
\node at (6.5,0.5) [] {\footnotesize $8$};
\node at (7.5,0.5) [] {\footnotesize $9$};

\draw [-] (-1,-1) -- (8,-1);
\draw [-] (-1,-2) -- (8,-2);
\draw [-] (-1,-3) -- (5,-3);
\draw [-] (-1,-4) -- (1,-4);
\draw [-] (-1,-5) -- (1,-5);
\draw [-] (-1,-6) -- (1,-6);
\draw [-] (-1,-7) -- (0,-7);
\draw [-] (0,1) -- (8,1);

\draw [-] (1,1) -- (1,-6);
\draw [-] (2,1) -- (2,-3);
\draw [-] (3,1) -- (3,-3);
\draw [-] (4,1) -- (4,-3);
\draw [-] (5,1) -- (5,-3);
\draw [-] (6,1) -- (6,-2);
\draw [-] (7,1) -- (7,-2);
\draw [-] (-1,0) -- (-1,-8);

\draw [-,thick] (0,2) -- (0,-8);
\draw [-,thick] (-2,0) -- (8,0);

\draw[-,thick] (-2,2) -- (8,2) -- (8,-8) -- (-2,-8) -- cycle;
\end{tikzpicture}
}
\qquad\qquad
\subfloat[][\textbf{\lq\lq Multi-degree\rq\rq\ by difference.}]
{\label{tab:cf1plus} 
\begin{tikzpicture}[scale=0.4]

\node at (-3.25,0) [] {};
\node at (9.25,0) [] {};

\filldraw[red!13!yellow] (0,0) rectangle (1,-1);
\filldraw[red!13!yellow] (1,0) rectangle (2,-1);
\filldraw[red!13!yellow] (2,0) rectangle (3,-1);
\filldraw[red!13!yellow] (3,0) rectangle (4,-1);
\filldraw[red!13!yellow] (4,0) rectangle (5,-1);
\filldraw[red!13!yellow] (5,0) rectangle (6,-1);
\filldraw[red!13!yellow] (6,0) rectangle (7,-1);
\filldraw[red!13!yellow] (7,0) rectangle (8,-1);


\filldraw[red!14!yellow] (0,-1) rectangle (1,-2);
\filldraw[red!13!yellow] (1,-1) rectangle (2,-2);
\filldraw[red!13!yellow] (2,-1) rectangle (3,-2);
\filldraw[red!13!yellow] (3,-1) rectangle (4,-2);
\filldraw[red!14!yellow] (4,-1) rectangle (5,-2);
\filldraw[red!14!yellow] (5,-1) rectangle (6,-2);
\filldraw[red!15!yellow] (6,-1) rectangle (7,-2);
\filldraw[red!15!yellow] (7,-1) rectangle (8,-2);


\filldraw[red!13!yellow] (0,-2) rectangle (1,-3);
\filldraw[red!16!yellow] (1,-2) rectangle (2,-3);
\filldraw[red!18!yellow] (2,-2) rectangle (3,-3);
\filldraw[red!43!yellow] (3,-2) rectangle (4,-3);
\filldraw[red!36!yellow] (4,-2) rectangle (5,-3);
\filldraw[red!58!yellow] (5,-2) rectangle (6,-3);
\filldraw[red!52!yellow] (6,-2) rectangle (7,-3);
\filldraw[red!85!yellow] (7,-2) rectangle (8,-3);


\filldraw[red!15!yellow] (0,-3) rectangle (1,-4);
\filldraw[red!56!yellow] (1,-3) rectangle (2,-4);
\filldraw[red!48!yellow] (2,-3) rectangle (3,-4);
\filldraw[red!82!yellow] (4,-3) rectangle (5,-4);


\filldraw[red!20!yellow] (0,-4) rectangle (1,-5);
\filldraw[red!88!yellow] (2,-4) rectangle (3,-5);


\filldraw[red!32!yellow] (0,-5) rectangle (1,-6);


\filldraw[red!49!yellow] (0,-6) rectangle (1,-7);


\filldraw[red!67!yellow] (0,-7) rectangle (1,-8);


\node at (-1,1) [] {\footnotesize $F_{n,d}$};
\node at (-1.5,-4) [] {\footnotesize $n$};
\node at (4,1.5) [] {\footnotesize $d$};

\node at (-0.5,-0.5) [] {\footnotesize $2$};
\node at (-0.5,-1.5) [] {\footnotesize $3$};
\node at (-0.5,-2.5) [] {\footnotesize $4$};
\node at (-0.5,-3.5) [] {\footnotesize $5$};
\node at (-0.5,-4.5) [] {\footnotesize $6$};
\node at (-0.5,-5.5) [] {\footnotesize $7$};
\node at (-0.5,-6.5) [] {\footnotesize $8$};
\node at (-0.5,-7.5) [] {\footnotesize $9$};

\node at (0.5,0.5) [] {\footnotesize $2$};
\node at (1.5,0.5) [] {\footnotesize $3$};
\node at (2.5,0.5) [] {\footnotesize $4$};
\node at (3.5,0.5) [] {\footnotesize $5$};
\node at (4.5,0.5) [] {\footnotesize $6$};
\node at (5.5,0.5) [] {\footnotesize $7$};
\node at (6.5,0.5) [] {\footnotesize $8$};
\node at (7.5,0.5) [] {\footnotesize $9$};

\draw [-] (-1,-1) -- (8,-1);
\draw [-] (-1,-2) -- (8,-2);
\draw [-] (-1,-3) -- (8,-3);
\draw [-] (-1,-4) -- (3,-4);
\draw [-] (4,-4) -- (5,-4);
\draw [-] (-1,-5) -- (1,-5);
\draw [-] (2,-5) -- (3,-5);
\draw [-] (-1,-6) -- (1,-6);
\draw [-] (-1,-7) -- (1,-7);
\draw [-] (0,1) -- (8,1);

\draw [-] (1,1) -- (1,-8);
\draw [-] (2,1) -- (2,-5);
\draw [-] (3,1) -- (3,-5);
\draw [-] (4,1) -- (4,-4);
\draw [-] (5,1) -- (5,-4);
\draw [-] (6,1) -- (6,-3);
\draw [-] (7,1) -- (7,-3);
\draw [-] (-1,0) -- (-1,-8);

\draw [-,thick] (0,2) -- (0,-8);
\draw [-,thick] (-2,0) -- (8,0);

\draw[-,thick] (-2,2) -- (8,2) -- (8,-8) -- (-2,-8) -- cycle;
\end{tikzpicture}
}
\end{center}

\begin{center}
\subfloat[][\textbf{Simple \lq\lq random data\rq\rq.}]
{\label{tab:cf2}
\begin{tikzpicture}[scale=0.4]

\node at (-.5,0) [] {};
\node at (.5,0) [] {};

\filldraw[red!11!yellow] (0,0) rectangle (1,-1);
\filldraw[red!10!yellow] (1,0) rectangle (2,-1);
\filldraw[red!11!yellow] (2,0) rectangle (3,-1);
\filldraw[red!11!yellow] (3,0) rectangle (4,-1);
\filldraw[red!13!yellow] (4,0) rectangle (5,-1);
\filldraw[red!12!yellow] (5,0) rectangle (6,-1);
\filldraw[red!18!yellow] (6,0) rectangle (7,-1);
\filldraw[red!19!yellow] (7,0) rectangle (8,-1);


\filldraw[red!11!yellow] (0,-1) rectangle (1,-2);
\filldraw[red!13!yellow] (1,-1) rectangle (2,-2);
\filldraw[red!25!yellow] (2,-1) rectangle (3,-2);
\filldraw[red!36!yellow] (3,-1) rectangle (4,-2);
\filldraw[red!48!yellow] (4,-1) rectangle (5,-2);
\filldraw[red!57!yellow] (5,-1) rectangle (6,-2);
\filldraw[red!66!yellow] (6,-1) rectangle (7,-2);
\filldraw[red!73!yellow] (7,-1) rectangle (8,-2);


\filldraw[red!14!yellow] (0,-2) rectangle (1,-3);
\filldraw[red!33!yellow] (1,-2) rectangle (2,-3);
\filldraw[red!58!yellow] (2,-2) rectangle (3,-3);
\filldraw[red!79!yellow] (3,-2) rectangle (4,-3);
\filldraw[red!98!yellow] (4,-2) rectangle (5,-3);


\filldraw[red!29!yellow] (0,-3) rectangle (1,-4);
\filldraw[red!59!yellow] (1,-3) rectangle (2,-4);
\filldraw[red!95!yellow] (2,-3) rectangle (3,-4);


\filldraw[red!44!yellow] (0,-4) rectangle (1,-5);
\filldraw[red!93!yellow] (1,-4) rectangle (2,-5);


\filldraw[red!64!yellow] (0,-5) rectangle (1,-6);


\filldraw[red!86!yellow] (0,-6) rectangle (1,-7);


\filldraw[red!100!yellow] (0,-7) rectangle (1,-8);


\node at (-1,1) [] {\footnotesize $F_{n,d}$};
\node at (-1.5,-4) [] {\footnotesize $n$};
\node at (4,1.5) [] {\footnotesize $d$};

\node at (-0.5,-0.5) [] {\footnotesize $2$};
\node at (-0.5,-1.5) [] {\footnotesize $3$};
\node at (-0.5,-2.5) [] {\footnotesize $4$};
\node at (-0.5,-3.5) [] {\footnotesize $5$};
\node at (-0.5,-4.5) [] {\footnotesize $6$};
\node at (-0.5,-5.5) [] {\footnotesize $7$};
\node at (-0.5,-6.5) [] {\footnotesize $8$};
\node at (-0.5,-7.5) [] {\footnotesize $9$};

\node at (0.5,0.5) [] {\footnotesize $2$};
\node at (1.5,0.5) [] {\footnotesize $3$};
\node at (2.5,0.5) [] {\footnotesize $4$};
\node at (3.5,0.5) [] {\footnotesize $5$};
\node at (4.5,0.5) [] {\footnotesize $6$};
\node at (5.5,0.5) [] {\footnotesize $7$};
\node at (6.5,0.5) [] {\footnotesize $8$};
\node at (7.5,0.5) [] {\footnotesize $9$};

\draw [-] (-1,-1) -- (8,-1);
\draw [-] (-1,-2) -- (8,-2);
\draw [-] (-1,-3) -- (5,-3);
\draw [-] (-1,-4) -- (3,-4);
\draw [-] (-1,-5) -- (2,-5);
\draw [-] (-1,-6) -- (1,-6);
\draw [-] (-1,-7) -- (1,-7);
\draw [-] (0,1) -- (8,1);

\draw [-] (1,1) -- (1,-8);
\draw [-] (2,1) -- (2,-5);
\draw [-] (3,1) -- (3,-4);
\draw [-] (4,1) -- (4,-3);
\draw [-] (5,1) -- (5,-3);
\draw [-] (6,1) -- (6,-2);
\draw [-] (7,1) -- (7,-2);
\draw [-] (-1,0) -- (-1,-8);

\draw [-,thick] (0,2) -- (0,-8);
\draw [-,thick] (-2,0) -- (8,0);

\draw[-,thick] (-2,2) -- (8,2) -- (8,-8) -- (-2,-8) -- cycle;
\end{tikzpicture}
}
\qquad\qquad
\subfloat[][\textbf{\lq\lq Random data\rq\rq\ by difference.}]
{\label{tab:cf2plus} 
\begin{tikzpicture}[scale=0.4]

\node at (-3.25,0) [] {};
\node at (9.25,0) [] {};

\filldraw[red!11!yellow] (0,0) rectangle (1,-1);
\filldraw[red!11!yellow] (1,0) rectangle (2,-1);
\filldraw[red!11!yellow] (2,0) rectangle (3,-1);
\filldraw[red!10!yellow] (3,0) rectangle (4,-1);
\filldraw[red!11!yellow] (4,0) rectangle (5,-1);
\filldraw[red!11!yellow] (5,0) rectangle (6,-1);
\filldraw[red!11!yellow] (6,0) rectangle (7,-1);
\filldraw[red!11!yellow] (7,0) rectangle (8,-1);


\filldraw[red!11!yellow] (0,-1) rectangle (1,-2);
\filldraw[red!10!yellow] (1,-1) rectangle (2,-2);
\filldraw[red!11!yellow] (2,-1) rectangle (3,-2);
\filldraw[red!11!yellow] (3,-1) rectangle (4,-2);
\filldraw[red!11!yellow] (4,-1) rectangle (5,-2);
\filldraw[red!12!yellow] (5,-1) rectangle (6,-2);
\filldraw[red!13!yellow] (6,-1) rectangle (7,-2);
\filldraw[red!14!yellow] (7,-1) rectangle (8,-2);


\filldraw[red!11!yellow] (0,-2) rectangle (1,-3);
\filldraw[red!12!yellow] (1,-2) rectangle (2,-3);
\filldraw[red!17!yellow] (2,-2) rectangle (3,-3);
\filldraw[red!26!yellow] (3,-2) rectangle (4,-3);
\filldraw[red!35!yellow] (4,-2) rectangle (5,-3);
\filldraw[red!46!yellow] (5,-2) rectangle (6,-3);
\filldraw[red!53!yellow] (6,-2) rectangle (7,-3);
\filldraw[red!65!yellow] (7,-2) rectangle (8,-3);


\filldraw[red!13!yellow] (0,-3) rectangle (1,-4);
\filldraw[red!25!yellow] (1,-3) rectangle (2,-4);
\filldraw[red!51!yellow] (2,-3) rectangle (3,-4);
\filldraw[red!69!yellow] (3,-3) rectangle (4,-4);
\filldraw[red!85!yellow] (4,-3) rectangle (5,-4);
\filldraw[red!96!yellow] (5,-3) rectangle (6,-4);


\filldraw[red!19!yellow] (0,-4) rectangle (1,-5);
\filldraw[red!54!yellow] (1,-4) rectangle (2,-5);
\filldraw[red!86!yellow] (2,-4) rectangle (3,-5);


\filldraw[red!33!yellow] (0,-5) rectangle (1,-6);
\filldraw[red!84!yellow] (1,-5) rectangle (2,-6);


\filldraw[red!52!yellow] (0,-6) rectangle (1,-7);


\filldraw[red!71!yellow] (0,-7) rectangle (1,-8);


\node at (-1,1) [] {\footnotesize $F_{n,d}$};
\node at (-1.5,-4) [] {\footnotesize $n$};
\node at (4,1.5) [] {\footnotesize $d$};

\node at (-0.5,-0.5) [] {\footnotesize $2$};
\node at (-0.5,-1.5) [] {\footnotesize $3$};
\node at (-0.5,-2.5) [] {\footnotesize $4$};
\node at (-0.5,-3.5) [] {\footnotesize $5$};
\node at (-0.5,-4.5) [] {\footnotesize $6$};
\node at (-0.5,-5.5) [] {\footnotesize $7$};
\node at (-0.5,-6.5) [] {\footnotesize $8$};
\node at (-0.5,-7.5) [] {\footnotesize $9$};

\node at (0.5,0.5) [] {\footnotesize $2$};
\node at (1.5,0.5) [] {\footnotesize $3$};
\node at (2.5,0.5) [] {\footnotesize $4$};
\node at (3.5,0.5) [] {\footnotesize $5$};
\node at (4.5,0.5) [] {\footnotesize $6$};
\node at (5.5,0.5) [] {\footnotesize $7$};
\node at (6.5,0.5) [] {\footnotesize $8$};
\node at (7.5,0.5) [] {\footnotesize $9$};

\draw [-] (-1,-1) -- (8,-1);
\draw [-] (-1,-2) -- (8,-2);
\draw [-] (-1,-3) -- (8,-3);
\draw [-] (-1,-4) -- (6,-4);
\draw [-] (-1,-5) -- (3,-5);
\draw [-] (-1,-6) -- (2,-6);
\draw [-] (-1,-7) -- (1,-7);
\draw [-] (0,1) -- (8,1);

\draw [-] (1,1) -- (1,-8);
\draw [-] (2,1) -- (2,-6);
\draw [-] (3,1) -- (3,-5);
\draw [-] (4,1) -- (4,-4);
\draw [-] (5,1) -- (5,-4);
\draw [-] (6,1) -- (6,-4);
\draw [-] (7,1) -- (7,-3);
\draw [-] (-1,0) -- (-1,-8);

\draw [-,thick] (0,2) -- (0,-8);
\draw [-,thick] (-2,0) -- (8,0);

\draw[-,thick] (-2,2) -- (8,2) -- (8,-8) -- (-2,-8) -- cycle;
\end{tikzpicture}
}
\end{center}

\begin{center}
\subfloat[][\textbf{Simple \lq\lq partitioning\rq\rq.}]
{\label{tab:cf3}
\begin{tikzpicture}[scale=0.4]

\node at (-0.5,0) [] {};
\node at (0.5,0) [] {};

\filldraw[red!11!yellow] (0,0) rectangle (1,-1);
\filldraw[red!13!yellow] (1,0) rectangle (2,-1);
\filldraw[red!14!yellow] (2,0) rectangle (3,-1);
\filldraw[red!14!yellow] (3,0) rectangle (4,-1);
\filldraw[red!14!yellow] (4,0) rectangle (5,-1);
\filldraw[red!15!yellow] (5,0) rectangle (6,-1);
\filldraw[red!16!yellow] (6,0) rectangle (7,-1);
\filldraw[red!19!yellow] (7,0) rectangle (8,-1);


\filldraw[red!12!yellow] (0,-1) rectangle (1,-2);
\filldraw[red!14!yellow] (1,-1) rectangle (2,-2);
\filldraw[red!17!yellow] (2,-1) rectangle (3,-2);
\filldraw[red!18!yellow] (3,-1) rectangle (4,-2);
\filldraw[red!22!yellow] (4,-1) rectangle (5,-2);
\filldraw[red!30!yellow] (5,-1) rectangle (6,-2);
\filldraw[red!37!yellow] (6,-1) rectangle (7,-2);
\filldraw[red!48!yellow] (7,-1) rectangle (8,-2);


\filldraw[red!12!yellow] (0,-2) rectangle (1,-3);
\filldraw[red!16!yellow] (1,-2) rectangle (2,-3);
\filldraw[red!18!yellow] (2,-2) rectangle (3,-3);
\filldraw[red!24!yellow] (3,-2) rectangle (4,-3);
\filldraw[red!36!yellow] (4,-2) rectangle (5,-3);
\filldraw[red!50!yellow] (5,-2) rectangle (6,-3);
\filldraw[red!70!yellow] (6,-2) rectangle (7,-3);
\filldraw[red!84!yellow] (7,-2) rectangle (8,-3);


\filldraw[red!13!yellow] (0,-3) rectangle (1,-4);
\filldraw[red!17!yellow] (1,-3) rectangle (2,-4);
\filldraw[red!21!yellow] (2,-3) rectangle (3,-4);
\filldraw[red!28!yellow] (3,-3) rectangle (4,-4);
\filldraw[red!41!yellow] (4,-3) rectangle (5,-4);
\filldraw[red!69!yellow] (5,-3) rectangle (6,-4);
\filldraw[red!94!yellow] (6,-3) rectangle (7,-4);


\filldraw[red!13!yellow] (0,-4) rectangle (1,-5);
\filldraw[red!18!yellow] (1,-4) rectangle (2,-5);
\filldraw[red!24!yellow] (2,-4) rectangle (3,-5);
\filldraw[red!31!yellow] (3,-4) rectangle (4,-5);
\filldraw[red!48!yellow] (4,-4) rectangle (5,-5);
\filldraw[red!75!yellow] (5,-4) rectangle (6,-5);


\filldraw[red!13!yellow] (0,-5) rectangle (1,-6);
\filldraw[red!20!yellow] (1,-5) rectangle (2,-6);
\filldraw[red!26!yellow] (2,-5) rectangle (3,-6);
\filldraw[red!34!yellow] (3,-5) rectangle (4,-6);
\filldraw[red!53!yellow] (4,-5) rectangle (5,-6);
\filldraw[red!86!yellow] (5,-5) rectangle (6,-6);


\filldraw[red!14!yellow] (0,-6) rectangle (1,-7);
\filldraw[red!21!yellow] (1,-6) rectangle (2,-7);
\filldraw[red!28!yellow] (2,-6) rectangle (3,-7);
\filldraw[red!37!yellow] (3,-6) rectangle (4,-7);
\filldraw[red!57!yellow] (4,-6) rectangle (5,-7);
\filldraw[red!90!yellow] (5,-6) rectangle (6,-7);


\filldraw[red!14!yellow] (0,-7) rectangle (1,-8);
\filldraw[red!23!yellow] (1,-7) rectangle (2,-8);
\filldraw[red!30!yellow] (2,-7) rectangle (3,-8);
\filldraw[red!40!yellow] (3,-7) rectangle (4,-8);
\filldraw[red!61!yellow] (4,-7) rectangle (5,-8);
\filldraw[red!94!yellow] (5,-7) rectangle (6,-8);


\node at (-1,1) [] {\footnotesize $F_{n,d}$};
\node at (-1.5,-4) [] {\footnotesize $n$};
\node at (4,1.5) [] {\footnotesize $d$};

\node at (-0.5,-0.5) [] {\footnotesize $2$};
\node at (-0.5,-1.5) [] {\footnotesize $3$};
\node at (-0.5,-2.5) [] {\footnotesize $4$};
\node at (-0.5,-3.5) [] {\footnotesize $5$};
\node at (-0.5,-4.5) [] {\footnotesize $6$};
\node at (-0.5,-5.5) [] {\footnotesize $7$};
\node at (-0.5,-6.5) [] {\footnotesize $8$};
\node at (-0.5,-7.5) [] {\footnotesize $9$};

\node at (0.5,0.5) [] {\footnotesize $2$};
\node at (1.5,0.5) [] {\footnotesize $3$};
\node at (2.5,0.5) [] {\footnotesize $4$};
\node at (3.5,0.5) [] {\footnotesize $5$};
\node at (4.5,0.5) [] {\footnotesize $6$};
\node at (5.5,0.5) [] {\footnotesize $7$};
\node at (6.5,0.5) [] {\footnotesize $8$};
\node at (7.5,0.5) [] {\footnotesize $9$};

\draw [-] (-1,-1) -- (8,-1);
\draw [-] (-1,-2) -- (8,-2);
\draw [-] (-1,-3) -- (8,-3);
\draw [-] (-1,-4) -- (7,-4);
\draw [-] (-1,-5) -- (6,-5);
\draw [-] (-1,-6) -- (6,-6);
\draw [-] (-1,-7) -- (6,-7);
\draw [-] (0,1) -- (8,1);

\draw [-] (1,1) -- (1,-8);
\draw [-] (2,1) -- (2,-8);
\draw [-] (3,1) -- (3,-8);
\draw [-] (4,1) -- (4,-8);
\draw [-] (5,1) -- (5,-8);
\draw [-] (6,1) -- (6,-8);
\draw [-] (7,1) -- (7,-4);
\draw [-] (-1,0) -- (-1,-8);

\draw [-,thick] (0,2) -- (0,-8);
\draw [-,thick] (-2,0) -- (8,0);

\draw[-,thick] (-2,2) -- (8,2) -- (8,-8) -- (-2,-8) -- cycle;
\end{tikzpicture}
}
\qquad\qquad
\subfloat[][\textbf{\lq\lq Partitioning\rq\rq\ by difference.}]
{\label{tab:cf3plus} 
\begin{tikzpicture}[scale=0.4]

\node at (-3.25,0) [] {};
\node at (9.25,0) [] {};

\filldraw[red!11!yellow] (0,0) rectangle (1,-1);
\filldraw[red!14!yellow] (1,0) rectangle (2,-1);
\filldraw[red!14!yellow] (2,0) rectangle (3,-1);
\filldraw[red!14!yellow] (3,0) rectangle (4,-1);
\filldraw[red!13!yellow] (4,0) rectangle (5,-1);
\filldraw[red!14!yellow] (5,0) rectangle (6,-1);
\filldraw[red!14!yellow] (6,0) rectangle (7,-1);
\filldraw[red!15!yellow] (7,0) rectangle (8,-1);


\filldraw[red!11!yellow] (0,-1) rectangle (1,-2);
\filldraw[red!13!yellow] (1,-1) rectangle (2,-2);
\filldraw[red!16!yellow] (2,-1) rectangle (3,-2);
\filldraw[red!18!yellow] (3,-1) rectangle (4,-2);
\filldraw[red!21!yellow] (4,-1) rectangle (5,-2);
\filldraw[red!27!yellow] (5,-1) rectangle (6,-2);
\filldraw[red!34!yellow] (6,-1) rectangle (7,-2);
\filldraw[red!42!yellow] (7,-1) rectangle (8,-2);


\filldraw[red!12!yellow] (0,-2) rectangle (1,-3);
\filldraw[red!15!yellow] (1,-2) rectangle (2,-3);
\filldraw[red!18!yellow] (2,-2) rectangle (3,-3);
\filldraw[red!24!yellow] (3,-2) rectangle (4,-3);
\filldraw[red!35!yellow] (4,-2) rectangle (5,-3);
\filldraw[red!50!yellow] (5,-2) rectangle (6,-3);
\filldraw[red!70!yellow] (6,-2) rectangle (7,-3);
\filldraw[red!84!yellow] (7,-2) rectangle (8,-3);


\filldraw[red!12!yellow] (0,-3) rectangle (1,-4);
\filldraw[red!17!yellow] (1,-3) rectangle (2,-4);
\filldraw[red!21!yellow] (2,-3) rectangle (3,-4);
\filldraw[red!27!yellow] (3,-3) rectangle (4,-4);
\filldraw[red!39!yellow] (4,-3) rectangle (5,-4);
\filldraw[red!69!yellow] (5,-3) rectangle (6,-4);
\filldraw[red!92!yellow] (6,-3) rectangle (7,-4);


\filldraw[red!13!yellow] (0,-4) rectangle (1,-5);
\filldraw[red!18!yellow] (1,-4) rectangle (2,-5);
\filldraw[red!23!yellow] (2,-4) rectangle (3,-5);
\filldraw[red!30!yellow] (3,-4) rectangle (4,-5);
\filldraw[red!46!yellow] (4,-4) rectangle (5,-5);
\filldraw[red!75!yellow] (5,-4) rectangle (6,-5);


\filldraw[red!13!yellow] (0,-5) rectangle (1,-6);
\filldraw[red!19!yellow] (1,-5) rectangle (2,-6);
\filldraw[red!25!yellow] (2,-5) rectangle (3,-6);
\filldraw[red!34!yellow] (3,-5) rectangle (4,-6);
\filldraw[red!49!yellow] (4,-5) rectangle (5,-6);
\filldraw[red!83!yellow] (5,-5) rectangle (6,-6);


\filldraw[red!13!yellow] (0,-6) rectangle (1,-7);
\filldraw[red!21!yellow] (1,-6) rectangle (2,-7);
\filldraw[red!28!yellow] (2,-6) rectangle (3,-7);
\filldraw[red!36!yellow] (3,-6) rectangle (4,-7);
\filldraw[red!52!yellow] (4,-6) rectangle (5,-7);
\filldraw[red!86!yellow] (5,-6) rectangle (6,-7);


\filldraw[red!12!yellow] (0,-7) rectangle (1,-8);
\filldraw[red!20!yellow] (1,-7) rectangle (2,-8);
\filldraw[red!29!yellow] (2,-7) rectangle (3,-8);
\filldraw[red!38!yellow] (3,-7) rectangle (4,-8);
\filldraw[red!56!yellow] (4,-7) rectangle (5,-8);
\filldraw[red!90!yellow] (5,-7) rectangle (6,-8);


\node at (-1,1) [] {\footnotesize $F_{n,d}$};
\node at (-1.5,-4) [] {\footnotesize $n$};
\node at (4,1.5) [] {\footnotesize $d$};

\node at (-0.5,-0.5) [] {\footnotesize $2$};
\node at (-0.5,-1.5) [] {\footnotesize $3$};
\node at (-0.5,-2.5) [] {\footnotesize $4$};
\node at (-0.5,-3.5) [] {\footnotesize $5$};
\node at (-0.5,-4.5) [] {\footnotesize $6$};
\node at (-0.5,-5.5) [] {\footnotesize $7$};
\node at (-0.5,-6.5) [] {\footnotesize $8$};
\node at (-0.5,-7.5) [] {\footnotesize $9$};

\node at (0.5,0.5) [] {\footnotesize $2$};
\node at (1.5,0.5) [] {\footnotesize $3$};
\node at (2.5,0.5) [] {\footnotesize $4$};
\node at (3.5,0.5) [] {\footnotesize $5$};
\node at (4.5,0.5) [] {\footnotesize $6$};
\node at (5.5,0.5) [] {\footnotesize $7$};
\node at (6.5,0.5) [] {\footnotesize $8$};
\node at (7.5,0.5) [] {\footnotesize $9$};

\draw [-] (-1,-1) -- (8,-1);
\draw [-] (-1,-2) -- (8,-2);
\draw [-] (-1,-3) -- (8,-3);
\draw [-] (-1,-4) -- (7,-4);
\draw [-] (-1,-5) -- (6,-5);
\draw [-] (-1,-6) -- (6,-6);
\draw [-] (-1,-7) -- (6,-7);
\draw [-] (0,1) -- (8,1);

\draw [-] (1,1) -- (1,-8);
\draw [-] (2,1) -- (2,-8);
\draw [-] (3,1) -- (3,-8);
\draw [-] (4,1) -- (4,-8);
\draw [-] (5,1) -- (5,-8);
\draw [-] (6,1) -- (6,-8);
\draw [-] (7,1) -- (7,-4);
\draw [-] (-1,0) -- (-1,-8);

\draw [-,thick] (0,2) -- (0,-8);
\draw [-,thick] (-2,0) -- (8,0);

\draw[-,thick] (-2,2) -- (8,2) -- (8,-8) -- (-2,-8) -- cycle;
\end{tikzpicture}
}
\end{center}

\begin{center}
\begin{tikzpicture}[scale=0.5]
\shadedraw[left color=yellow,right color = red] (3,0) rectangle (21,0.5);
\node at (3,-0.4) [] {\scriptsize $10^{-2}\, s$};
\node at (6,-0.4) [] {\scriptsize $10^{-1}\, s$};
\node at (9,-0.4) [] {\scriptsize $1\, s$};
\node at (12,-0.4) [] {\scriptsize $10\, s$};
\node at (15,-0.4) [] {\scriptsize $10^{2}\, s$};
\node at (18,-0.4) [] {\scriptsize $10^{3}\, s$};
\node at (21,-0.4) [] {\scriptsize $10^{4}\, s$};

\node at (12,0.9) [] {\scriptsize CPU-TIME};

\draw [-] (6,0) -- (6,0.5);
\draw [-] (9,0) -- (9,0.5);
\draw [-] (12,0) -- (12,0.5);
\draw [-] (15,0) -- (15,0.5);
\draw [-] (18,0) -- (18,0.5);
\end{tikzpicture}
\end{center}
\end{table}

Table \ref{tab:2345} suggests that for the strategies based on the \lq\lq multidegree\rq\rq\ and \lq\lq random data\rq\rq\ methods the dimension $n$ of the projective space affects the computational complexity slightly more than the degree $d$ of the Fermat hypersurface. The effect on the complexity of the number of variables and of the degree totally changes when using the \textbf{Strategy 5} or \textbf{Strategy 6} which rely on Theorem \ref{th:partitioning}. Table \ref{tab:2345} shows that the complexity of the \lq\lq partitioning\rq\rq\ method is heavily affected by the degree $d$ and mildly affected by the dimension of the ambient space $n$.
This behavior occurs because the effective computations in these strategies are performed on a number of variables bounded by $\min \{d,n+1\}$ and so eventually smaller than $n$. More precisely, assume that $\mathsf{a} \in \mathcal{P}_{n+1,d}$ and $\mathsf{b} \in \mathcal{P}_{n'+1,d}$ are two partitions of $n+1$ and $n'+1$ with same length $s$. The ideals $I_{\mathsf{a}}^d \setminus \mathcal{H}$ and $I_{\mathsf{b}}^d \setminus\mathcal{H}$ defining the critical points are both contained in the polynomial ring $\K[z_1,\ldots,z_s]$ and they are in fact almost the same ideal: they only differ by one generator ($a_1z_1^d+\cdots+a_sz_s^d$ instead of $b_1z_1^d+\cdots+b_sz_s^d$) and one different saturation ($a_1z_1+\cdots+a_sz_s$ instead of $b_1z_1+\cdots+b_sz_s$). For such ideals the complexity of degree computation does not change substantially. Hence, for a fixed $d$ and for $n \geqslant d-1$, the running time of ML degree algorithms grows slowly as $n$ increases, depending only on the number of partitions in $\mathcal{P}_{n+1,d}$. Thus, we need to repeat more times computations for ideals whose complexity is basically fixed.

\medskip

Finally, Table \ref{tab:MLdegree} lists the ML degree of the Fermat hypersurface $F_{n,d}$, for several values of $n$ and $d$. 

\begin{table}[!ht]
\begin{center}
\caption{\small\label{tab:MLdegree}  The ML degree of several Fermat hypersurfaces. The empty entries of the table correspond to ML degrees whose computation following the best strategy could not be completed within 24 hours of cpu-time.}

\medskip

\begin{tikzpicture}[scale=1.05]
\draw [-,very thick] (-2,0.55) -- (-2,-7.25) -- (11.6,-7.25) -- (11.6,0.55) -- cycle;
\draw [-,thick] (0,0.55) -- (0,-7.25);
\draw [-,thick] (-2,-0.25) -- (11.6,-0.25);


\draw [very thin,black!75,-] (0,0.15) -- (11.6,0.15);
\draw [very thin,black!75,-] (-1,-0.25) -- (-1,-7.25);

\draw [very thin,black!75,-] (-1,-0.75) -- (11.6,-0.75);
\draw [very thin,black!75,-] (-1,-1.25) -- (11.6,-1.25);
\draw [very thin,black!75,-] (-1,-1.75) -- (11.6,-1.75);
\draw [very thin,black!75,-] (-1,-2.25) -- (11.6,-2.25);
\draw [very thin,black!75,-] (-1,-2.75) -- (11.6,-2.75);
\draw [very thin,black!75,-] (-1,-3.25) -- (11.6,-3.25);
\draw [very thin,black!75,-] (-1,-3.75) -- (11.6,-3.75);
\draw [very thin,black!75,-] (-1,-4.25) -- (11.6,-4.25);
\draw [very thin,black!75,-] (-1,-4.75) -- (11.6,-4.75);
\draw [very thin,black!75,-] (-1,-5.25) -- (11.6,-5.25);
\draw [very thin,black!75,-] (-1,-5.75) -- (11.6,-5.75);
\draw [very thin,black!75,-] (-1,-6.25) -- (11.6,-6.25);
\draw [very thin,black!75,-] (-1,-6.75) -- (11.6,-6.75);
\draw [very thin,black!75,-] (-1,-7.25) -- (11.6,-7.25);

\node at (-1,0.07) [] {\small $\MLdeg F_{n,d}$};

\node at (-1.5,-3.75) [] {\footnotesize $n$};
\node at (-0.5,-0.5) [] {\footnotesize $2$};
\node at (-0.5,-1) [] {\footnotesize $3$};
\node at (-0.5,-1.5) [] {\footnotesize $4$};
\node at (-0.5,-2) [] {\footnotesize $5$};
\node at (-0.5,-2.5) [] {\footnotesize $6$};
\node at (-0.5,-3) [] {\footnotesize $7$};
\node at (-0.5,-3.5) [] {\footnotesize $8$};
\node at (-0.5,-4) [] {\footnotesize $9$};
\node at (-0.5,-4.5) [] {\footnotesize $10$};
\node at (-0.5,-5) [] {\footnotesize $11$};
\node at (-0.5,-5.5) [] {\footnotesize $12$};
\node at (-0.5,-6) [] {\footnotesize $13$};
\node at (-0.5,-6.5) [] {\footnotesize $14$};
\node at (-0.5,-7) [] {\footnotesize $15$};

\node at (5.8,0.35) [] {\footnotesize $d$};

\node at (0.4,-0.05) [] {\footnotesize $2$};
\node at (0.4,-0.5) [] {\tiny 6};
\node at (0.4,-1) [] {\tiny 14};
\node at (0.4,-1.5) [] {\tiny 30};
\node at (0.4,-2) [] {\tiny 62};
\node at (0.4,-2.5) [] {\tiny 126};
\node at (0.4,-3) [] {\tiny 254};
\node at (0.4,-3.5) [] {\tiny 510};
\node at (0.4,-4) [] {\tiny 1022};
\node at (0.4,-4.5) [] {\tiny 2046};
\node at (0.4,-5) [] {\tiny 4094};
\node at (0.4,-5.5) [] {\tiny 8190};
\node at (0.4,-6) [] {\tiny 16382};
\node at (0.4,-6.5) [] {\tiny 32766};
\node at (0.4,-7) [] {\tiny 65534};
\draw [very thin,black!75,-] (0.8,0.15) -- (0.8,-7.25);

\node at (1.4,-0.05) [] {\footnotesize $3$};
\node at (1.4,-0.5) [] {\tiny 9};
\node at (1.4,-1) [] {\tiny 30};
\node at (1.4,-1.5) [] {\tiny 95};
\node at (1.4,-2) [] {\tiny 293};
\node at (1.4,-2.5) [] {\tiny 896};
\node at (1.4,-3) [] {\tiny 2726};
\node at (1.4,-3.5) [] {\tiny 6813};
\node at (1.4,-4) [] {\tiny 25047};
\node at (1.4,-4.5) [] {\tiny 75746};
\node at (1.4,-5) [] {\tiny 228825};
\node at (1.4,-5.5) [] {\tiny 690690};
\node at (1.4,-6) [] {\tiny 2083370};
\node at (1.4,-6.5) [] {\tiny 6280649};
\node at (1.4,-7) [] {\tiny 18925046};
\draw [very thin,black!75,-] (2,0.15) -- (2,-7.25);

\node at (2.75,-0.05) [] {\footnotesize $4$};
\node at (2.75,-0.5) [] {\tiny 18};
\node at (2.75,-1) [] {\tiny 76};
\node at (2.75,-1.5) [] {\tiny 320};
\node at (2.75,-2) [] {\tiny 1294};
\node at (2.75,-2.5) [] {\tiny 5180};
\node at (2.75,-3) [] {\tiny 20892};
\node at (2.75,-3.5) [] {\tiny 84132};
\node at (2.75,-4) [] {\tiny 337384};
\node at (2.75,-4.5) [] {\tiny 1353110};
\node at (2.75,-5) [] {\tiny 5429494};
\node at (2.75,-5.5) [] {\tiny 21767018};
\node at (2.75,-6) [] {\tiny 87215496};
\node at (2.75,-6.5) [] {\tiny 349452578};
\node at (2.75,-7) [] {\tiny 1397573292};
\draw [very thin,black!75,-] (3.5,0.15) -- (3.5,-7.25);

\node at (4.35,-0.05) [] {\footnotesize $5$};
\node at (4.35,-0.5) [] {\tiny 27};
\node at (4.35,-1) [] {\tiny 140};
\node at (4.35,-1.5) [] {\tiny 725};
\node at (4.35,-2) [] {\tiny 3655};
\node at (4.35,-2.5) [] {\tiny 18494};
\node at (4.35,-3) [] {\tiny 92972};
\node at (4.35,-3.5) [] {\tiny 467685};
\node at (4.35,-4) [] {\tiny 2347469};
\node at (4.35,-4.5) [] {\tiny 11781044};
\node at (4.35,-5) [] {\tiny 59070599};
\node at (4.35,-5.5) [] {\tiny 296105784};
\node at (4.35,-6) [] {\tiny 1483630894};
\node at (4.35,-6.5) [] {\tiny 7432036277};
\node at (4.35,-7) [] {\tiny 37220018572};
\draw [very thin,black!75,-] (5.2,0.15) -- (5.2,-7.25);

\node at (6.1,-0.05) [] {\footnotesize $6$};
\node at (6.1,-0.5) [] {\tiny 42};
\node at (6.1,-1) [] {\tiny 258};
\node at (6.1,-1.5) [] {\tiny 1530};
\node at (6.1,-2) [] {\tiny 9186};
\node at (6.1,-2.5) [] {\tiny 55482};
\node at (6.1,-3) [] {\tiny 334578};
\node at (6.1,-3.5) [] {\tiny 2012514};
\node at (6.1,-4) [] {\tiny 12064506};
\node at (6.1,-4.5) [] {\tiny 72298842};
\node at (6.1,-5) [] {\tiny 433840578};
\node at (6.1,-5.5) [] {\tiny 2605621434};
\node at (6.1,-6) [] {\tiny 15650082090};
\node at (6.1,-6.5) [] {\tiny 93935183202};
\node at (6.1,-7) [] {\tiny 563502117618};
\draw [very thin,black!75,-] (7,0.15) -- (7,-7.25);

\node at (8,-0.05) [] {\footnotesize $7$};
\node at (8,-0.5) [] {\tiny 51};
\node at (8,-1) [] {\tiny 370};
\node at (8,-1.5) [] {\tiny 2635};
\node at (8,-2) [] {\tiny 18627};
\node at (8,-2.5) [] {\tiny 131320};
\node at (8,-3) [] {\tiny 924154};
\node at (8,-3.5) [] {\tiny 6496251};
\node at (8,-4) [] {\tiny 45627451};
\node at (8,-4.5) [] {\tiny 320280400};
\node at (8,-5) [] {\tiny 2247181471};
\node at (8,-5.5) [] {\tiny 15761369624};
\node at (8,-6) [] {\tiny 110517144758};
\node at (8,-6.5) [] {\tiny 774762908611};
\node at (8,-7) [] {\tiny 5430367540394};
\draw [very thin,black!75,-] (9,0.15) -- (9,-7.25);

\node at (9.5,-0.05) [] {\footnotesize $8$};
\node at (9.5,-0.5) [] {\tiny 72};
\node at (9.5,-1) [] {\tiny 584};
\node at (9.5,-1.5) [] {\tiny 4680};
\node at (9.5,-2) [] {\tiny 37448};
\node at (9.5,-2.5) [] {\tiny 298872};
\node at (9.5,-3) [] {\tiny };
\node at (9.5,-3.5) [] {\tiny };
\node at (9.5,-4) [] {\tiny };
\node at (9.5,-4.5) [] {\tiny };
\node at (9.5,-5) [] {\tiny };
\node at (9.5,-5.5) [] {\tiny };
\node at (9.5,-6) [] {\tiny };
\node at (9.5,-6.5) [] {\tiny };
\node at (9.5,-7) [] {\tiny };
\draw [very thin,black!75,-] (10,0.15) -- (10,-7.25);

\node at (10.4,-0.05) [] {\footnotesize $9$};
\node at (10.4,-0.5) [] {\tiny 87};
\node at (10.4,-1) [] {\tiny 792};
\node at (10.4,-1.5) [] {\tiny 7265};
\node at (10.4,-2) [] {\tiny };
\node at (10.4,-2.5) [] {\tiny };
\node at (10.4,-3) [] {\tiny };
\node at (10.4,-3.5) [] {\tiny };
\node at (10.4,-4) [] {\tiny };
\node at (10.4,-4.5) [] {\tiny };
\node at (10.4,-5) [] {\tiny };
\node at (10.4,-5.5) [] {\tiny };
\node at (10.4,-6) [] {\tiny };
\node at (10.4,-6.5) [] {\tiny };
\node at (10.4,-7) [] {\tiny };
\draw [very thin,black!75,-] (10.8,0.15) -- (10.8,-7.25);

\node at (11.2,-0.05) [] {\footnotesize $10$};
\node at (11.2,-0.5) [] {\tiny 108};
\node at (11.2,-1) [] {\tiny 1102};
\node at (11.2,-1.5) [] {\tiny 11090};
\node at (11.2,-2) [] {\tiny };
\node at (11.2,-2.5) [] {\tiny };
\node at (11.2,-3) [] {\tiny };
\node at (11.2,-3.5) [] {\tiny };
\node at (11.2,-4) [] {\tiny };
\node at (11.2,-4.5) [] {\tiny };
\node at (11.2,-5) [] {\tiny };
\node at (11.2,-5.5) [] {\tiny };
\node at (11.2,-6) [] {\tiny };
\node at (11.2,-6.5) [] {\tiny };
\node at (11.2,-7) [] {\tiny };

\end{tikzpicture}
\end{center}
\end{table}

\paragraph{\bf Acknowledgments.} The content of \cite{HuhSturmfels} has been the topic of a series of lectures taught by Bernd Sturmfels at the summer school \emph{Combinatorial algebraic geometry} in Levico Terme in June 2013. This paper arises from an exercise he suggested during a training session. We thank Bernd Sturmfels for having pushed and encouraged us to go into depth on the matter. We also thank the referees for having pointed out some inaccuracies and for the useful comments and suggestions.



\begin{thebibliography}{10}

\bibitem{Bjork}
J.-E. Bj{{\"o}}rk, \emph{Rings of differential operators}, North-Holland
  Mathematical Library, vol.~21, North-Holland Publishing Co., Amsterdam, 1979.

\bibitem{BHR}
Max-Louis~G. Buot, Serkan Ho{\c{s}}ten, and Donald St.~P. Richards,
  \emph{Counting and locating the solutions of polynomial systems of maximum
  likelihood equations. {II}. {T}he {B}ehrens-{F}isher problem}, Statist.
  Sinica \textbf{17} (2007), no.~4, 1343--1354.

\bibitem{CHKS}
Fabrizio Catanese, Serkan Ho{\c{s}}ten, Amit Khetan, and Bernd Sturmfels,
  \emph{The maximum likelihood degree}, Amer. J. Math. \textbf{128} (2006),
  no.~3, 671--697.

\bibitem{DraismaRodriguez}
Jan Draisma and Jose Rodriguez, \emph{Maximum likelihood duality for
  determinantal varieties}, Int. Math. Res. Not. IMRN \textbf{20} (2014),
  5648--5666.

\bibitem{DSS}
Mathias Drton, Bernd Sturmfels, and Seth Sullivant, \emph{Lectures on algebraic
  statistics}, Oberwolfach Seminars, vol.~39, Birkh{\"a}user Verlag, Basel,
  2009.

\bibitem{Eisenbud}
David Eisenbud, \emph{Commutative algebra}, Graduate Texts in Mathematics, vol.
  150, Springer-Verlag, New York, 1995, With a view toward algebraic geometry.

\bibitem{M2}
Daniel~R. Grayson and Michael~E. Stillman, \emph{Macaulay2, a software system
  for research in algebraic geometry}, available at
  \href{http://www.math.uiuc.edu/Macaulay2/}{\texttt{www.math.uiuc.edu/Macaulay2/}}.

\bibitem{GDP}
Elizabeth Gross, Mathias Drton, and Sonja Petrovi{{\'c}}, \emph{Maximum
  likelihood degree of variance component models}, Electron. J. Stat.
  \textbf{6} (2012), 993--1016.

\bibitem{GR}
Elizabeth Gross and Jose Rodriguez, \emph{{M}aximum likelihood geometry in the
  presence of data zeros}, {ISSAC} 2014 --- {P}roceedings of the 39th
  {I}nternational {S}ymposium on {S}ymbolic and {A}lgebraic {C}omputation,
  2014.

\bibitem{Hartshorne}
Robin Hartshorne, \emph{Algebraic geometry}, Springer-Verlag, New York, 1977.

\bibitem{HRS}
Jonathan Hauenstein, Jose Rodriguez, and Bernd Sturmfels, \emph{Maximum
  likelihood for matrices with rank constraints}, J. Alg. Stat. \textbf{5}
  (2014), no.~1, 18--38.

\bibitem{HKS}
Serkan Ho{\c{s}}ten, Amit Khetan, and Bernd Sturmfels, \emph{Solving the
  likelihood equations}, Found. Comput. Math. \textbf{5} (2005), no.~4,
  389--407.

\bibitem{HS}
Serkan Ho{\c{s}}ten and Seth Sullivant, \emph{The algebraic complexity of
  maximum likelihood estimation for bivariate missing data}, Algebraic and
  geometric methods in statistics, Cambridge Univ. Press, Cambridge, 2010,
  pp.~123--133.

\bibitem{Huh}
June Huh, \emph{The maximum likelihood degree of a very affine variety},
  Compos. Math. \textbf{149} (2013), no.~8, 1245--1266.

\bibitem{HuhML1}
\bysame, \emph{Varieties with maximum likelihood degree one}, J. Alg. Stat.
  \textbf{5} (2014), no.~1, 1--17.

\bibitem{HuhSturmfels}
June Huh and Bernd Sturmfels, \emph{Likelihood geometry}, Combinatorial
  Algebraic Geometry (Sandra Di~Rocco and Bernd Sturmfels, eds.), Lecture Notes
  in Mathematics (C.I.M.E. Foundation Subseries), vol. 2108, Springer, 2014.

\bibitem{JJ}
Joachim Jelisiejew, \emph{Deformations of zero-dimensional schemes and
  applications}, Master's thesis, University of Warsaw, 2013, Available at
  \href{http://www.arxiv.org/abs/1307.8108}{\texttt{arxiv.org/abs/1307.8108}}.

\bibitem{MillerSturmfels}
Ezra Miller and Bernd Sturmfels, \emph{Combinatorial commutative algebra},
  Graduate Texts in Mathematics, vol. 227, Springer-Verlag, New York, 2005.

\bibitem{PachterSturmfels}
Lior Pachter and Bernd Sturmfels, \emph{Algebraic {S}tatistics for
  {C}omputational {B}iology}, Cambridge Univ. Press, 2005.

\bibitem{Uhler}
Caroline Uhler, \emph{Geometry of maximum likelihood estimation in {G}aussian
  graphical models}, Ann. Statist. \textbf{40} (2012), no.~1, 238--261.

\end{thebibliography}
\providecommand{\bysame}{\leavevmode\hbox to3em{\hrulefill}\thinspace}
\providecommand{\MR}{\relax\ifhmode\unskip\space\fi MR }
\providecommand{\MRhref}[2]{%
  \href{http://www.ams.org/mathscinet-getitem?mr=#1}{#2}
}
\providecommand{\href}[2]{#2}

\end{document}